\newtheorem{lem}{Lemma}[section]
\newtheorem{thm}{Theorem}[section]
\newtheorem{cor}{Corollary}[section]
\newtheorem{prop}{Proposition}[section]
\newtheorem{remark}{Remark}[section]
\newtheorem{ex}{Example}[section]
\newtheorem{defn}{Definition}[section]
\numberwithin{equation}{section}
\newcommand{\be}{\begin{equation}}
\newcommand{\ee}{\end{equation}}
\newcommand{\bi}{\bibitem}
\newcommand{\ben}{\begin{enumerate}}
\newcommand{\een}{\end{enumerate}}
\newcommand{\beq}{\begin{eqnarray}}
\newcommand{\eeq}{\end{eqnarray}}
\newcommand{\beqn}{\begin{eqnarray*}}
\newcommand{\eeqn}{\end{eqnarray*}}
\title{Almost Ricci solitons on Finsler spaces}
\author{\small{Qiaoling Xia}\footnote{Supported by the NNSFC (Nos.12071423, 12471044) and the Scientific Research Foundation of HDU (No. KYS075621060).}\\
{\small {\it Department of Mathematics, School of Sciences }}\\ {\small{\it  Hangzhou Dianzi University}}\\
 {\small {\it Hangzhou, Zhejiang Province, 310018, P.R.China}}\\
 {\small {\it E-mail address:  xiaqiaoling$@$hdu.edu.cn}}}
\date{}
\begin{document}
\maketitle{}

\begin{abstract}
In this paper, (gradient) almost Ricci solitons on Finsler measure spaces $(M, F, m)$ are introduced and investigated. We prove that $(M, F, m)$ is a gradient almost Ricci soliton with soliton scalar $\kappa$ if and only if the infinity-Ricci curvature Ric$_\infty=\kappa$ on $M$. Moreover, we give an equivalent characterization of (gradient) almost Ricci solitons for Randers metrics $F=\alpha+\beta$, which implies that every Randers (gradient) almost Ricci soliton is of isotropic S$_{BH}$-curvature. Based on this and the navigation technique, we further classify Randers almost Ricci solitons (resp., gradient almost Ricci solitons) up to classifications of Randers Einstein metrics $F$ (resp., Riemannian gradient almost Ricci solitons) and the homothetic vector fields of $F$ (resp., solutions of the equation which the weight function $f$ of $m$ satisfies) when $F$ has isotropic S$_{BH}$-curvature.  As applications, we obtain some rigidity results for compact Randers (gradient) Ricci solitons and construct several Randers gradient Ricci solitons, which are the first nontrivial examples of gradient Ricci solitons in Finsler geometry.

{\small{\it MSC 2010: }}53C60, 53C21, 53C24

{\small{\it Keywords: Finsler measure space, almost Ricci soliton, Randers metric,  conformal vector field, S-curvature.}}
\end{abstract}

\section{Introduction}\label{sec1}

 Ricci solitons, which are a generalization of Einstein metrics, were first introduced by  R. S.Hamilton (\cite{Ha2}). As self-similar solutions for Hamilton's Ricci flow (\cite{Ha1}), Ricci solitons have been studied intensively in the last twenty years (\cite{CLN}, \cite{CK}). They often arise as limits of dilations of singularities in the Ricci flow (\cite{Ha3}, \cite{Se}). In turn, the study of almost Ricci solitons, which are generalizations of quasi-Einstein manifolds related to the string theory, was started by Pigola, Rigoli, Rimoldi, Setti in \cite{PRRS}.
 Recall that {\it an almost Ricci soliton structure} $(M, g, V)$ on an $n(\geq 2)$-dimensional Riemannian manifold $(M, g)$ is the choice of a smooth vector field $V$ (if any) satisfying the soliton equation
\beq {\rm{Ric}}+\frac 12\mathcal L_Vg=\rho g,\label{R-RS}\eeq  where $\rho=\rho(x)$ is a scalar function on $M$, Ric denotes the Ricci curvature of $g$ and $\mathcal L_V$ stands for the Lie derivative in the direction of $V$. In the special case when $V=\nabla f$ for some function $f$ on $M$, we say $(M, g, f)$ is a {\it gradient almost Ricci soliton} with soliton scalar $\rho$, where $f$ is said to be a {\it potential function} of the Ricci soliton. In this case, the soliton equation becomes  \beq {\rm{Ric}}+{\rm Hess}(f)=\rho g.\label{RGS}\eeq When $\rho$ is a constant, the corresponding (gradient) almost Ricci soliton is called a {\it (gradient) Ricci soliton} with soliton constant $\rho$ (\cite{Cao}).
There is a rich literature on Riemannian (gradient) Ricci solitons (cf. \cite{CK}, \cite{CLN}, \cite{Cao} and references therein). Recently, some research progress has been made on Riemannian (gradient) almost Ricci solitons (\cite{PRRS}, \cite{BBR}, \cite{BGR}, \cite{DA}). Ricci solitons are also closely related to Einstein field theory (\cite{AW}).

 As a natural generalization of Riemannian geometry, Finsler geometry received increasing attention in recent decades. Recall that a nonnegative function $F$ on the tangent bundle  $TM$ is a {\it Finsler metric (or structure) on $M$} if (i) $F$ is smooth on $TM_0:=TM\setminus\{0\}$ and $F(x, y)=0$ if and only if $y=0$; (ii) $F(x, \lambda y)=\lambda F(x,y)$ for all $\lambda>0$; (iii)  the matrix $\left(g_{ij}(x, y)\right):=\left(\frac 12(F^2)_{y^iy^j}\right)$ is positive definite for any nonzero $y\in T_x M$. A {\it Finsler manifold} means a differential manifold $M$ equipped with a Finsler structure $F$, denoted by $(M, F)$.  Given a smooth measure $m$ on $M$, the triple $(M, F, m)$ is called a {\it Finsler measure space}. In Riemannian case, $F$ is of the form $\sqrt{h_{ij}(x)y^iy^j}$. In general, the measure $m$ on a Finsler manifold $(M, F)$ can not be uniquely determined by $F$. A Finsler measure space is not a metric space in usual sense because $F$ may be nonreversible, i.e., $F(x, y)\neq F(x, -y)$ may happen. This non-reversibility causes the asymmetry of the associated distance function and the nonlinearity for gradient or Laplacian of a function on $M$.

Let $(M, F)$ be an $n$-dimensional Finsler manifold. We say that a Finsler metric $F=F(x, y)$ is an {\it Einstein metric} with Einstein scalar $\kappa$ if its Ricci curvature
is isotropic, i.e., for any $(x, y)\in TM\setminus\{0\}$,
$$ {\rm{Ric}}(x, y)=\kappa F^2(x, y), $$ where $\kappa=\kappa(x)$ is a scalar function on $M$.
Finsler metrics for which Ricci curvatures $\kappa$ are constant (resp., zero),  are said to be {\it Ricci-constant} (resp., {\it Ricci-flat}). Note that every Einstein Riemannian metric must be Ricci-constant when $n\geq 3$ by Schur's Lemma. However this is not true in general in Finsler geometry except for some special Finsler metrics, among which Randers metrics. A well known conjecture raised by S.S. Chern's asks whether every
smooth manifold admits a Ricci-constant Finsler metric. This question has already been settled in the affirmative for dimension $2$ because, by Thurston's construction, every 2-manifold admits a complete Riemannian metric of constant Gaussian curvature (\cite{Be} for an exposition and references therein).
Much less is known about dimension greater than $2$.  To study Chern's question, D.Bao in \cite{Bao} introduced an unnormalized Ricci flow for Finsler metrics (\cite{Bao}):
 \beq \partial_t(\log F)=-F^{-2}{\rm{Ric}}, \ \ \ F|_{t=0}=F_0 \label{UNRF} \eeq inspired by Hamilton's Ricci flow in Riemannian geometry.
Both sides of (\ref{UNRF}) make sense on the spherical bundle $SM$.  It is easy to see that Ricci-flat Finsler metrics are stationary solutions of (\ref{UNRF}). There is a large class of solutions, called {\it (Finslerian) Ricci solitons}, which may be regarded as generalized fixed points for (\ref{UNRF}) (\cite{BY}). More generally, we introduce Finslerian (gradient) almost Ricci solitons, which are main research objects in present paper.
 \begin{defn} Let $(M, F)$ be an $n$-dimensional Finsler manifold. $(M, F, V)$ is called an almost Ricci soliton with soliton scalar $\kappa$ if there is a $C^2$ vector field $V$ on $M$ such that $(F, V)$ satisfies
\beq 2{\rm{Ric}}+{\mathcal L}_{\hat V}(F^2)=2\kappa F^2, \label{RS-kappa}\eeq for some function $\kappa=\kappa(x)$ on $M$, where $\mathcal L_{\hat V}$ is the Lie derivative in the direction $\hat V$, which is the complete lift of $V$ on $TM$. In particular, if $\kappa$ is a constant,  $(M, F, V)$ is called a Ricci soliton with soliton constant $\kappa$.\end{defn}

Obviously, when $V=0$ or $V$ is a Killing vector field of $F$, the solutions  of (\ref{RS-kappa}) are exactly Einstein metrics on $M$, called {\it trivial almost Ricci solitons}. For any $(x, y)\in TM$, if $y=0$, (\ref{RS-kappa}) trivially holds since the both sides of (\ref{RS-kappa}) are identically zero. if $y\in T_xM\setminus\{0\}$, there is an open neighborhood $U_x\subset M$ and a non-vanishing geodesic field $Y$ on $U_x$ such that $Y_x=y$. $Y$ is called a {\it geodesic extension} of $y$. This geodesic field $Y$ induces a Riemannian metric $g_Y=(g_{ij}(Y))$ on $U_x$, where $g_{ij} (1\leq i, j\leq n)$ are the fundamental tensors of $F$. We denote by $dV_{g_Y}$ the Riemannian volume form of $g_{Y}$. Given a smooth measure $m$ on $M$, we decompose the volume form $dm=e^{-\psi}dV_{g_Y}$ in the direction of geodesic field $Y$ on $U_x$, where $\psi$ is a smooth function on $U_x$. Note that $\psi$ depends on the choice of $Y$.
 \begin{defn}  With notations as above, if there is a vector field $V$ on $M$ such that $V_x=({\rm grad}_{g_Y}\psi)(x)$ (the gradient of $\psi$ with respect to $g_Y$ at $x$) and $(F, V)$ satisfies (\ref{RS-kappa}) at $(x, y)\in TM$, then $(M, F, m)$ is called a {\it gradient almost Ricci soliton} with soliton scalar $\kappa$. In particular, it is called a {\it gradient Ricci soliton} with soliton constant $\kappa$ if $\kappa$ is a constant. \end{defn}

  When $F=g$ is Riemannian, (gradient) almost Ricci solitons are reduced to the corresponding ones in Riemannian geometry. It is known that a gradient Ricci soliton is a Ricci soliton in Riemannian case. However it is not true in general in Finslerian case because the Finsler metric $F$ depends on the tangent direction $y$ as well as the point $x$. A (gradient) almost Ricci soliton is said to be {\it expanding, steady or shrinking} respectively if $\kappa<0$, $\kappa=0$ or $\kappa>0$. Otherwise, it is said to be {\it indefinite}. We say that a (gradient) almost Ricci soliton is {\it forward (resp., backward) complete} if $(M, F)$ is forward (resp., backward) complete. It is said to be {\it complete} if it is both forward complete and backward complete. If $M$ is compact, then a (gradient) almost Ricci soliton is said to be a {\it compact (gradient) almost Ricci soliton}.  Gradient Ricci solitons are related with the infinity-Ricci curvature Ric$_\infty$ introduced by S.Ohta (\cite{Oh1}). The following result gives a geometric explanation of gradient Ricci solitons.
\begin{thm} \label{thm11} Let $(M, F, m)$ be an $n$-dimensional Finsler measure space. Then $(M, F, m)$ is a gradient almost Ricci soliton with soliton scalar $\kappa$ if and only if Ric$_\infty=\kappa$ with respect to $m$, where $\kappa=\kappa(x)$ is a scalar function on $M$. In particular, $(M, F, m)$ is a gradient Ricci soliton with soliton constant $\kappa$ if and only if Ric$_\infty=\kappa$ with respect to $m$.
\end{thm}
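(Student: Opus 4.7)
The plan is to reduce the soliton equation (\ref{RS-kappa}) at each fixed $(x,y)\in TM_0$ to S.~Ohta's formula
\[
{\rm Ric}_\infty(x,y)={\rm Ric}(x,y)+\dot S(x,y)={\rm Ric}(x,y)+\ddot\psi_Y(0),
\]
where the dot denotes differentiation along the geodesic $\gamma_y$ with $\dot\gamma_y(0)=y$ and $\psi_Y$ is the local weight in the decomposition $dm=e^{-\psi_Y}dV_{g_Y}$ relative to a geodesic extension $Y$ of $y$. With this in hand, the soliton equation $2{\rm Ric}+\mathcal L_{\hat V}(F^2)=2\kappa F^2$ becomes ${\rm Ric}_\infty=\kappa F^2$, which is the stated condition on ${\rm Ric}_\infty$.

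First, fix $(x,y)\in TM_0$, choose a geodesic extension $Y$ of $y$, and introduce the distortion $\tau(z,w):=\tfrac12\log\det(g_{ij}(z,w))-\log\sigma(z)$ (where $dm=\sigma(z)\,dz$ in local coordinates), so that $\psi_Y(z)=\tau(z,Y(z))$. Since $Y$ is a geodesic field, the geodesic equation forces $y^j\partial_j Y^k(x)=-2G^k(x,y)$; differentiating $\psi_Y$ along $\gamma_y$ then gives $\dot\psi_Y(0)=y^i\tau_{x^i}-2G^i\tau_{y^i}=S(x,y)$ and $\ddot\psi_Y(0)=\dot S(x,y)$, which recovers ${\rm Ric}_\infty(x,y)={\rm Ric}(x,y)+\ddot\psi_Y(0)$.

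Second, take $V$ defined locally by $V^i(z):=g^{il}(z,Y(z))\partial_l\psi_Y(z)$, so that $V_x={\rm grad}_{g_Y}\psi_Y(x)$. Using the complete-lift formula $\hat V=V^i\partial_{x^i}+y^j(\partial_{x^j}V^i)\partial_{y^i}$ together with $\partial_{y^i}F^2=2g_{ij}y^j$ and $F^2_{x^i}=y^py^q\partial_{x^i}g_{pq}$, I compute $\hat V(F^2)(x,y)$ term by term. Two Finsler-specific simplifications collapse the expression: (i) the Cartan-tensor contribution arising from $\partial_{y^m}g^{il}$ vanishes by the homogeneity identity $y^pC_{pqm}=0$; (ii) the remaining $\partial_x g$-terms assemble, via the spray formula $y^jy^k(\partial_{x^j}g_{mk}+\partial_{x^k}g_{mj}-\partial_{x^m}g_{jk})=4G_m$ with $G_m=g_{ml}G^l$, into $-4G^n\partial_n\psi_Y$. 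Adding the direct second-order term $2y^jy^l\partial_j\partial_l\psi_Y$ yields
\[
\tfrac12\mathcal L_{\hat V}(F^2)(x,y)=y^jy^l\partial_j\partial_l\psi_Y(x)-2G^n(x,y)\partial_n\psi_Y(x)=\ddot\psi_Y(0),
\]
which together with the first step gives $\tfrac12\mathcal L_{\hat V}(F^2)=\dot S$ at $(x,y)$.

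Substituting into (\ref{RS-kappa}) produces ${\rm Ric}+\dot S=\kappa F^2$ at $(x,y)$, i.e.\ ${\rm Ric}_\infty(x,y)=\kappa(x)F^2(x,y)$; by 2-homogeneity in $y$ this is exactly ${\rm Ric}_\infty=\kappa$ with respect to $m$. The converse reverses the construction pointwise: for each $(x,y)$, define $V$ via the weighted gradient above, and the identity just established shows (\ref{RS-kappa}) holds at $(x,y)$. The Ricci-soliton case (constant $\kappa$) follows immediately. The main obstacle is the coordinate computation in the second step: a priori $\mathcal L_{\hat V}(F^2)$ depends on the off-point extension of $V$, and the two cancellations above — Cartan terms killed by $y^pC_{pqm}=0$ and $\partial_x g$-terms absorbed into the spray coefficients — are precisely what force it to equal the intrinsic second-order quantity $\ddot\psi_Y(0)$, independent of the choice of extension.
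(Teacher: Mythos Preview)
Your argument is correct and follows the same overall strategy as the paper: both reduce the statement to the key identity $\tfrac12\mathcal L_{\hat V}(F^2)(x,y)=\dot S(x,y)$ when $V$ is the $g_Y$-gradient of the local weight $\psi_Y$ (this is the paper's Lemma~\ref{lem31}). The difference is only in how that identity is established. The paper observes that $F^2(Y)=\hat g^2(Y)$ for the induced Riemannian metric $\hat g=g_Y$, so $\mathcal L_{\hat V}(F^2(Y))=2V_{j\|k}Y^jY^k=2\,\widehat{\rm Hess}(\psi)(Y)$, and then identifies the Riemannian Hessian along the geodesic with $\dot S$; this is conceptually cleaner and avoids any explicit spray computation. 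Your route is a direct coordinate calculation in which the Cartan terms vanish via $y^pC_{pqm}=0$ and the remaining $\partial_x g$-terms assemble into $-4G^m\partial_m\psi_Y$ through the spray formula; this is more hands-on but makes the independence from the choice of extension explicit.

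One detail you gloss over that the paper handles carefully: Definition~1.2 requires $V$ to be a vector field on all of $M$, not just on the neighborhood where $Y$ is defined, while $\mathcal L_{\hat V}(F^2)$ depends on the first jet of $V$ at $x$. The paper fixes this by multiplying the local gradient $\widetilde V={\rm grad}_{g_Y}\psi$ by a cutoff $f$ that is identically $1$ near $x$, so that the global $V=f\widetilde V$ agrees with $\widetilde V$ to first order at $x$ and hence gives the same Lie derivative there. Your phrase ``define $V$ locally'' should be supplemented with this standard extension step.
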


 If $F=g$ is Riemannian, then Theorem \ref{thm11} implies that $(M, g, f)$ is a gradient Ricci soliton if and only if the $\infty$-dimensional Bakry-Emery Ricci curvature on the weighted Riemannian manifold $(M, g, e^{-f}dV_g)$ is constant (\cite{Li}).
Because of the nonlinearity of (\ref{UNRF}) in $y$,  the existence and convergence of the solutions to (\ref{UNRF}) is open up to now. Although there are generalized solutions (i.e., Ricci solitons) to (\ref{UNRF}), there are few results including nontrivial examples for non-Riemannian Finslerian Ricci solitons except for \cite{MZZ}. This is one of the motivations to write this paper. Some nontrivial examples are very important to further study Finslerian unnormalized Ricci flow (resp., Ricci soliton). Let us consider Randers metrics for simplicity.

Randers metrics are important class of Finsler metrics, which are expressed by $F=\alpha+\beta$, where $\alpha^2=a_{ij}(x)y^iy^j$ is a Riemannian metric and $\beta=b_i(x)y^i$ is a 1-form with $b:=\|\beta\|_\alpha<1$ on an $n$-dimensional differential manifold $M$. They were originated from the research on
the general relativity (\cite{Ra}) and have been widely applied in biology, physics and psychology, etc. (\cite{AIM}). In particular, Randers metrics are the solution of Zermelo's navigation problem (\cite{BRS}). To state our results, we introduce some notations. We denote by $b_{i;j}$ the covariant derivative of $\beta$ with respect to the Levi-Civita connection of $\alpha$ and use $a_{ij}$ to raise and
lower the indices of $b_i$, $r_{ij}$, $s_{ij}$, $r_i$, $s_i$ and $y^i$ etc., throughout the paper.  Let
\beqn & r_{ij}:=\frac 12(b_{i;j}+b_{j;i}), \ \  r_j: =b^i r_{ij}, \ \ s_{ij}:=\frac 12(b_{i;j}-b_{j;i}), \ \  s^i_{\ j}:=a^{ik}s_{kj}, \\
& s_j:=b_is^i_{\ j}=b^is_{ij}, \ \ e_{ij}=r_{ij}+b_i s_j+b_j s_i,\ \  t_{ij}:= s_{ik}s^k_{\ j},\ \ \ t^i_{\ j}:=a^{ik}t_{kj}, \ \ \ t_j:= b^it_{ij}. \eeqn In the following, we denote by $r_0$, $s_0$ and $r_{00}$, $e_{00}$ the contractions with $y^i$ once and twice respectively, i.e.,  $r_0:=r_jy^j$, $s_0:=s_jy^j$, and $r_{00}:=r_{ij}y^iy^j$, $e_{00}:=e_{ij}y^iy^j$.  Moreover, we always denote by $V$ a $C^2$ vector field on $M$ throughout the paper unless otherwise stated.

\begin{thm} \label{thm12} Let $F=\alpha+\beta$ be a non-Riemannian Randers metric on an $n (\geq 2)$-dimensional manifold $M$ and $V$ be a vector field on $M$. Then $(M, F, V)$ is an almost Ricci soliton with soliton scalar $\kappa$ if and only if there are scalar functions $c=c(x)$ and $\sigma=\sigma(x)$ on $M$ such that $V$ is a conformal vector field of $\alpha$ with conformal factor $c$, i.e., $V_{i;j}+V_{j;i}=4ca_{ij}$,  and the following identities hold on $TM$:
\beq e_{00}&=& 2\sigma(\alpha^2-\beta^2), \label{e00-1}\\
 {}^{\alpha}{\rm{Ric}}&=& (\kappa-2c)(\alpha^2+\beta^2)+t^i_{\ i}\alpha^2+2t_{00}-(n-1)\sigma^2(3\alpha^2-\beta^2)\nonumber \\
& &+2(n-1)\sigma_0\beta-(n-1)(s_0^2+s_{0;0}).\label{a-Ric-1} \eeq
where ${}^{\alpha}{\rm{Ric}}$ is the Ricci curvature of $\alpha$ and $\sigma_0:=\sigma_{x^i}y^i$ with
\beq 3(n-1)\sigma_0&=& 2c\beta-{\mathcal L_{\hat V}(\beta)}, \label{tau-0-1}\eeq  here ${\mathcal L}_{\hat V}(\beta)$ is the Lie derivative of $\beta$  with respect to the complete lift $\hat V$ of $V$ given by (\ref{L-ab})$_2$.
\end{thm}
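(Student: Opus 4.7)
The plan is to substitute the known formula for ${\rm Ric}(F)$ of a Randers metric $F=\alpha+\beta$ (expressed in terms of ${}^{\alpha}{\rm Ric}$ together with $r_{ij}, s_{ij}, b_i$ and their $\alpha$-covariant derivatives) along with an explicit expansion of $\mathcal{L}_{\hat V}(F^2)$ into the soliton equation (\ref{RS-kappa}), and then exploit the irrationality of $\alpha$ in $y$ to decouple the resulting identity into two polynomial equations in $y$.

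Writing $F^2=\alpha^2+2\alpha\beta+\beta^2$, I would first compute
\begin{align*}
\mathcal{L}_{\hat V}(\alpha^2) &= (V_{i;j}+V_{j;i})\,y^iy^j, \\
\mathcal{L}_{\hat V}(\beta^2) &= 2\beta\,\mathcal{L}_{\hat V}(\beta), \\
2\,\mathcal{L}_{\hat V}(\alpha\beta) &= \tfrac{\beta}{\alpha}\,\mathcal{L}_{\hat V}(\alpha^2) + 2\alpha\,\mathcal{L}_{\hat V}(\beta),
\end{align*}
and combine these with the Randers Ricci formula. After clearing denominators by multiplying (\ref{RS-kappa}) through by an appropriate power of $F$, the equation assumes the form $P(x,y)+\alpha\,Q(x,y)=0$ with $P, Q$ polynomials in $y$. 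Since $F$ is non-Riemannian and $\alpha=\sqrt{a_{ij}(x)y^iy^j}$ is irrational in $y$, this forces $P\equiv 0\equiv Q$ separately.

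Next, analysing $Q=0$ degree-by-degree in $y$: tracing the top-degree part forces the quadratic form $V_{i;j}+V_{j;i}$ to be a scalar multiple of $a_{ij}$, yielding the conformal condition $V_{i;j}+V_{j;i}=4c\,a_{ij}$ with $c(x)$ one-quarter of the $a$-trace. The remaining pieces of $Q=0$ then produce (\ref{e00-1}), in which the scalar $\sigma(x)$ emerges as the proportionality factor forced by the $\alpha^2-\beta^2$ structure, together with the $y$-linear identity (\ref{tau-0-1}) relating $\sigma_0$ to $\mathcal{L}_{\hat V}(\beta)$ and $\beta$. Substituting the conformal relation, (\ref{e00-1}) and (\ref{tau-0-1}) into the remaining equation $P=0$ simplifies it precisely to (\ref{a-Ric-1}). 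The converse is a direct verification: plugging the four identities back into the Randers Ricci formula and $\mathcal{L}_{\hat V}(F^2)$ shows that (\ref{RS-kappa}) is satisfied.

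The main technical obstacle is the careful algebraic bookkeeping in the decoupling step: the Randers Ricci formula intertwines $\alpha^{-1}$, $\alpha^0$ and $\alpha$-terms with $\beta$-polynomial factors, and the Lie-derivative expansion introduces additional $\alpha^{-1}$-pieces through $\mathcal{L}_{\hat V}(\alpha\beta)$, so one must rationalize with care to avoid losing cross-terms before matching coefficients of $1$ and $\alpha$. A subsidiary subtlety is the self-consistency of $\sigma(x)$: it is introduced through the structural ansatz (\ref{e00-1}) yet must simultaneously satisfy (\ref{tau-0-1}); showing that both conditions arise together from the single soliton equation --- rather than being over-determined --- is the delicate point. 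Once these are in hand, isolating (\ref{a-Ric-1}) is essentially bookkeeping.
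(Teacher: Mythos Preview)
Your overall strategy --- clear denominators, separate the rational and $\alpha$-irrational parts, and read off constraints degree-by-degree --- is exactly what the paper does, and it correctly yields the conformal condition on $V$ and the identity (\ref{e00-1}). However, there is a genuine gap in your plan at the point where you claim that (\ref{tau-0-1}) drops out of ``the remaining pieces of $Q=0$.'' It does not.

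What the direct decoupling of the soliton equation actually produces, after (\ref{e00-1}) and the conformal condition, is \emph{not} (\ref{tau-0-1}) but rather a third equation involving the divergence $s^i_{\ 0;i}$, namely
\[
s^i_{\ 0;i}=(\kappa-c)\beta+(n-1)\Bigl(\tfrac12\sigma_0+t_0+2\sigma s_0+\sigma^2\beta\Bigr)-\tfrac12\,\mathcal L_{\hat V}(\beta),
\]
together with the ${}^{\alpha}{\rm Ric}$ equation (in a form still containing $\mathcal L_{\hat V}(\beta)$). To pass from this $s^i_{\ 0;i}$-equation to the clean $y$-linear relation (\ref{tau-0-1}), the paper invokes an \emph{independent} identity coming from the contracted second Bianchi identity for $\alpha$:
\[
s^i_{\ 0;i}={}^{\alpha}{\rm Ric}_{0j}\,b^j+r^i_{\ i;0}-r^i_{\ 0;i}.
\]
One then computes ${}^{\alpha}{\rm Ric}_{0j}b^j$ by contracting the ${}^{\alpha}{\rm Ric}$ equation with $b^j$, evaluates $r^i_{\ i;0}-r^i_{\ 0;i}$ under (\ref{e00-1}), and thereby obtains a \emph{second} expression for $s^i_{\ 0;i}$. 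Equating the two expressions and contracting once more with $b^i$ is what finally yields (\ref{tau-0-1}); the equation (\ref{a-Ric-1}) is then (\ref{a-Ric}) with $\mathcal L_{\hat V}(\beta)$ eliminated via (\ref{tau-0-1}). Without this Bianchi-type input, the system you extract from $P=0$ and $Q=0$ is the triple $\{(\ref{e00-1}),\ {}^{\alpha}{\rm Ric}\text{-equation},\ s^i_{\ 0;i}\text{-equation}\}$, and (\ref{tau-0-1}) is not a consequence of polynomial bookkeeping alone. Your proposal should add this step explicitly.
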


Note that (\ref{e00-1}) is equivalent to that $F$ is of isotropic S$_{BH}$-curvature $\sigma$, i.e., $S_{BH}=(n+1)\sigma F$, where S$_{BH}$ stands for the S-curvature of $F$ with respect to the Busemann-Hausdorff measure $m_{BH}$ (see Lemma \ref{lem-41} below). On the other hand, since $(M, F, V)$ is a non-Riemannian almost Ricci soliton, then $\mathcal L_{\hat V}(\alpha)=2c\alpha$ by Theorem \ref{thm12}. In this case, if $V$ further satisfies $\mathcal L_{\hat V}(\beta)=2c\beta$, then $V$ is a conformal vector field of $F$ with conformal factor $c$ by Proposition \ref{prop22} below. Consequently, $V$ is homothetic by Theorem 1.1 in \cite{HM} (also see Theorem 1.2, \cite{SX}). Moreover, $\sigma$ is constant by (\ref{tau-0-1}) when $n\geq 2$. Thus one obtains
\begin{cor}\label{cor11} Let $F=\alpha+\beta$ be a non-Riemannian Randers metric on an $n(\geq 2)$-dimensional manifold $M$.  If there is a vector field $V$ on $M$ such that $(M, F, V)$ is an almost Ricci soliton,  then $F$ is of isotropic S$_{BH}$-curvature and $V$ is a conformal vector field of $\alpha$.

Further,  if $V$ satisfies  $\mathcal L_{\hat V}(\beta)=2c\beta$,  then $F$ is of constant S$_{BH}$-curvature and $V$ is a homothetic vector field of $F$.
\end{cor}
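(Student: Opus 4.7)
The plan is to extract both conclusions directly from Theorem \ref{thm12}, together with the auxiliary results the author has already flagged (Lemma \ref{lem-41} for S$_{BH}$-curvature, Proposition \ref{prop22} for the conformality of $F$, and the Hu--Ma / Shen--Xia rigidity theorem for conformal fields of non-Riemannian Randers metrics). Nothing beyond a careful bookkeeping of the soliton equations should be needed.

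For the first assertion, I assume $(M,F,V)$ is an almost Ricci soliton and apply Theorem \ref{thm12}. This immediately produces scalar functions $c$ and $\sigma$ with $V_{i;j}+V_{j;i}=4c\,a_{ij}$, which by definition says $V$ is a conformal vector field of $\alpha$ with factor $c$. It also produces the identity \eqref{e00-1}, $e_{00}=2\sigma(\alpha^2-\beta^2)$. Appealing to Lemma \ref{lem-41} (cited just after Theorem \ref{thm12}), this is precisely the condition that $F$ has isotropic S$_{BH}$-curvature $S_{BH}=(n+1)\sigma F$. Hence both conclusions of the first part fall out of Theorem \ref{thm12}.

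For the second assertion, the extra hypothesis $\mathcal L_{\hat V}(\beta)=2c\beta$ must be combined with the conformality of $\alpha$. From $V_{i;j}+V_{j;i}=4c\,a_{ij}$ one reads off $\mathcal L_{\hat V}(\alpha^2)=4c\alpha^2$, i.e.\ $\mathcal L_{\hat V}(\alpha)=2c\alpha$. Together with the assumption on $\beta$, this yields $\mathcal L_{\hat V}(F)=2cF$, so by Proposition \ref{prop22} the field $V$ is conformal for $F$ itself with factor $c$. At this point I invoke Theorem~1.1 of \cite{HM} (equivalently Theorem~1.2 of \cite{SX}): a conformal vector field on a non-Riemannian Randers manifold must be homothetic, hence $c$ is a constant.

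Finally, substituting $\mathcal L_{\hat V}(\beta)=2c\beta$ into the compatibility identity \eqref{tau-0-1} gives $3(n-1)\sigma_0=2c\beta-2c\beta=0$, so $\sigma_0\equiv 0$ on $TM$; since $n\geq 2$ this forces $\sigma$ to be a constant on $M$. Combined with the first part, $F$ has constant S$_{BH}$-curvature $(n+1)\sigma$ and $V$ is homothetic, which is the desired conclusion. The one point that requires a bit of attention---and which I regard as the only nonroutine step---is verifying cleanly that $\mathcal L_{\hat V}(\alpha^2)=4c\alpha^2$ follows from $V_{i;j}+V_{j;i}=4c\,a_{ij}$ in the Finslerian formalism (so that Proposition \ref{prop22} is applicable); everything else is direct substitution.
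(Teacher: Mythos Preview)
Your proposal is correct and follows essentially the same route as the paper: both derive the first assertion directly from Theorem~\ref{thm12} together with Lemma~\ref{lem-41}, and both obtain the second assertion by combining $\mathcal L_{\hat V}(\alpha)=2c\alpha$ with the hypothesis $\mathcal L_{\hat V}(\beta)=2c\beta$, invoking Proposition~\ref{prop22} and the Huang--Mo/Shen--Xia rigidity result to force $c$ constant, then reading off $\sigma_0=0$ from \eqref{tau-0-1}. The step you flag as ``nonroutine'' is in fact immediate from \eqref{L-ab}, and note that the cited Theorem~1.1 of \cite{HM} requires isotropic S-curvature---which you have already secured in the first part---so the invocation is legitimate.
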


  In particular, when $V=0$, which implies that $c=0$,  almost Ricci solitons are reduced to Einstein metrics, which are  trivial Ricci solitons, and (\ref{tau-0-1}) implies that $\sigma$ is constant when $n\geq 2$. Thus we obtain an equivalent characterization for Einstein metrics, which was due to D.Bao and C. Robles (\cite{BR}).
\begin{cor} \label{cor12} (\cite{BR}) Let $F=\alpha+\beta$ be a Randers metric on an $n(\geq 2)$-dimensional manifold $M$. Then $(M, F)$ is an Einstein metric with Einstein scalar $\kappa$ if and only if $F$ is of constant S$_{BH}$-curvature $\sigma$ and
\beq {}^{\alpha}{\rm{Ric}}=\kappa(\alpha^2+\beta^2)+ t^i_{\ i}\alpha^2+2t_{00}-(n-1)\sigma^2(3\alpha^2-\beta^2)-(n-1)(s_0^2+s_{0;0}).\label{a-Ric-2} \eeq
\end{cor}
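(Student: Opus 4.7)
The strategy is to read off Corollary \ref{cor12} as the special case $V\equiv 0$ of Theorem \ref{thm12}. Indeed, with $V=0$ the soliton equation (\ref{RS-kappa}) reduces to $2{\rm Ric}=2\kappa F^2$, which is exactly the Einstein condition with Einstein scalar $\kappa$. Hence $(M,F)$ is Einstein with scalar $\kappa$ if and only if $(M,F,0)$ is an almost Ricci soliton with soliton scalar $\kappa$, and it suffices to translate the conclusions of Theorem \ref{thm12} to this degenerate setting.

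Assume first that $F$ is non-Riemannian, so Theorem \ref{thm12} applies. Setting $V=0$, the conformal identity $V_{i;j}+V_{j;i}=4c\,a_{ij}$ forces $c=0$, and $\mathcal L_{\hat V}(\beta)\equiv 0$. Equation (\ref{tau-0-1}) then becomes $3(n-1)\sigma_0=0$; since $n\geq 2$ this gives $\sigma_0=\sigma_{x^i}y^i\equiv 0$, so $\sigma$ is constant on $M$. By Lemma \ref{lem-41} (equivalently, the observation immediately after Theorem \ref{thm12}), equation (\ref{e00-1}) is equivalent to $F$ having isotropic---and hence now constant---$S_{BH}$-curvature $\sigma$. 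Finally, substituting $c=0$ and $\sigma_0=0$ into (\ref{a-Ric-1}) collapses it to precisely (\ref{a-Ric-2}). The converse is the same chain of equivalences run backwards: given constant $S_{BH}$-curvature $\sigma$ and (\ref{a-Ric-2}), the identities (\ref{e00-1})--(\ref{tau-0-1}) all hold with $V=0$ and $c=0$, so Theorem \ref{thm12} produces the Einstein structure.

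The Riemannian subcase $\beta\equiv 0$ is not covered by Theorem \ref{thm12} and must be dispatched separately, but it is immediate: then $s_{ij}\equiv t_{ij}\equiv 0$, the $S_{BH}$-curvature vanishes identically (forcing $\sigma=0$ in the hypothesis), and both the Einstein condition and (\ref{a-Ric-2}) collapse to the Riemannian Einstein equation ${}^{\alpha}{\rm Ric}=\kappa\alpha^2$.

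No substantive obstacle arises, since Theorem \ref{thm12} carries all the analytic weight. The only points deserving explicit care are the appeal to Lemma \ref{lem-41} identifying (\ref{e00-1}) with the isotropic $S_{BH}$-curvature condition, and the observation that ``isotropic'' automatically upgrades to ``constant'' once $\sigma_0\equiv 0$ is extracted from (\ref{tau-0-1}).
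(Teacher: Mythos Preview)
Your argument is correct and follows exactly the route the paper takes: specialize Theorem \ref{thm12} to $V=0$, deduce $c=0$ and $\sigma_0=0$ from (\ref{tau-0-1}), and watch (\ref{a-Ric-1}) collapse to (\ref{a-Ric-2}). Your separate treatment of the Riemannian case $\beta\equiv 0$ is a small completeness point the paper leaves implicit, but it is handled correctly.
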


On the other hand,  Randers metrics $F=\alpha+\beta$ are solutions to the navigation problem (\cite{ChS}). Let $h^2=h_{ij}(x)y^iy^j$ be a Riemannian metric and $W=W^i(x)\frac{\partial}{\partial x^i}$ be a vector field with $\|W\|_h<1$ on $M$.
Then the solution to the navigation problem is a geodesic of the metric $F$ satisfying
$$h\left(x, \frac y{F(x, y)}-W_x\right)=1.$$ By solving this equation, one obtains a Randers metric $F$ given by
\beq F=\frac{\sqrt{\lambda h^2+W_0^2}}{\lambda}-\frac{W_0}{\lambda}, \ \ \ \  \ W_0=W_iy^i,\label{F-hW} \eeq where $W_i:=h_{ij}W^j$ and $\lambda: =1-\|W\|_h^2$. Conversely, every Randers metric $F=\alpha+\beta$ on a manifold $M$ can be obtained from a Riemann metric $h$ and a vector field $W$ with $\|W\|_h<1$ on $M$ (\cite{ChS}, \cite{BRS}). We call (\ref{F-hW}) the {\it navigation representation} of $F$ and  $(h, W)$ the {\it navigation data} of $F$. $F$ is Riemannian if and only if $W=0$,  and $F$ is of isotropic (resp., constant) S$_{BH}$-curvature $\sigma$ if and only if $W$ is a conformal (resp., homothetic) vector field of $h$ with conformal factor $-\sigma$ (\cite{Xing} or Lemma \ref{lem51} in \S 5). Based on this and Corollary \ref{cor11}, we obtain the following result. In particular, if $(M, F, V)$ is an almost Ricci soliton, then $(M, h)$ must be Einstein.

\begin{thm} \label{thm13*} Let $F$ be a non-Riemannian Randers metric with navigation data $(h, W)$ on an $n (\geq 2)$-dimensional manifold $M$ and $V$ be a vector field on $M$. Then  $(M, F, V)$ is an almost Ricci soliton with soliton scalar $\kappa$ if and only if there are scalar functions $\mu=\mu(x)$ and $\sigma=\sigma(x)$ on $M$ such that $h$ is an Einstein metric with Einstein scalar $\mu$, $W$ is a conformal vector field of $h$ with conformal factor $-\sigma$ and $V$ satisfies
\beq \mathcal L_{\hat V}(h^2)&=&2ch^2-6(n-1)\left\{(\sigma_iW^i)h^2+\sigma_0W_0\right\},\label{LV-h2} \\
 \mathcal L_{\hat V}(W_0)&=&c W_0-3(n-1)\left\{2(\sigma_iW^i)W_0-\lambda\sigma_0\right\},\label{LVW0}\eeq where $c:=\kappa-\mu+(n-1)\sigma^2+2(n-1)\sigma_iW^i$ and $\hat V$ is the complete lift of $V$ on $TM$. \end{thm}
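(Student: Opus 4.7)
The plan is to combine Theorem~\ref{thm12} with the navigation dictionary between $(\alpha,\beta)$ and $(h,W)$. With $\lambda:=1-\|W\|_h^2$, the representation (\ref{F-hW}) is equivalent to
\[
h^2=\lambda(\alpha^2-\beta^2),\qquad W_0=-\lambda\beta,
\]
and these two pointwise identities translate every object between the $(\alpha,\beta)$-side and the $(h,W)$-side.

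For the ``only if'' direction, Theorem~\ref{thm12} gives that $V$ is a conformal vector field of $\alpha$, that (\ref{e00-1}) holds, and that (\ref{a-Ric-1})--(\ref{tau-0-1}) hold. By Lemma~\ref{lem-41}, (\ref{e00-1}) is equivalent to $F$ having isotropic $S_{BH}$-curvature $\sigma$; by Lemma~\ref{lem51}, this in turn is equivalent to $W$ being conformal for $h$ with conformal factor $-\sigma$. The crucial step is then to rewrite (\ref{a-Ric-1}) in navigation variables: one substitutes the standard navigation formula expressing ${}^{\alpha}\mathrm{Ric}$ in terms of ${}^{h}\mathrm{Ric}$, $W$, and $\sigma$, and further rewrites each of the $\alpha$-invariants $r_{ij},s_{ij},s_i,t_{ij},t^i{}_i,s_{0;0}$ appearing in (\ref{a-Ric-1}) through $W$ and $\sigma$, using the conformality of $W$. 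After collecting terms, (\ref{a-Ric-1}) should reduce to an Einstein equation ${}^{h}\mathrm{Ric}=\mu h^2$, with the scalar $\mu$ pinned down by the relation $c=\kappa-\mu+(n-1)\sigma^2+2(n-1)\sigma_iW^i$.

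It remains to encode the conformality of $V$ for $\alpha$ together with (\ref{tau-0-1}) as the pair (\ref{LV-h2})--(\ref{LVW0}). The Leibniz rule applied to the dictionary identities yields
\[
\mathcal L_{\hat V}(h^2)=V(\lambda)(\alpha^2-\beta^2)+\lambda\,\mathcal L_{\hat V}(\alpha^2-\beta^2),
\]
\[
\mathcal L_{\hat V}(W_0)=-V(\lambda)\beta-\lambda\,\mathcal L_{\hat V}(\beta),
\]
into which one feeds $\mathcal L_{\hat V}(\alpha^2)$ (known from $V$ being $\alpha$-conformal) and $\mathcal L_{\hat V}(\beta)$ (known from (\ref{tau-0-1})), together with the directional derivative $V(\lambda)=-V(\|W\|_h^2)$ computed from the conformality of $W$ for $h$. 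After using $W_0=-\lambda\beta$ and grouping terms in $h^2$, $W_0$, and $\sigma_0$ separately, one should recover exactly (\ref{LV-h2}) and (\ref{LVW0}). The converse implication is obtained by running every step in reverse: (\ref{LV-h2})--(\ref{LVW0}) recover the required forms of $\mathcal L_{\hat V}(\alpha^2)$ and $\mathcal L_{\hat V}(\beta)$, hence the conformality of $V$ for $\alpha$ and (\ref{tau-0-1}); the Einstein condition on $h$ combined with the navigation Ricci formula returns (\ref{a-Ric-1}); the conformality of $W$ for $h$ returns (\ref{e00-1}); and Theorem~\ref{thm12} hands back the almost Ricci soliton structure.

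The principal obstacle is the tensorial reduction of (\ref{a-Ric-1}) to the Einstein equation on $h$: one has to rewrite every $\alpha$-invariant entering (\ref{a-Ric-1}) through $h$-data and verify that the many ``navigation correction'' terms produced by the transition from ${}^{\alpha}\mathrm{Ric}$ to ${}^{h}\mathrm{Ric}$ cancel exactly against the explicit $\sigma$-, $t_{00}$-, and $s_0$-terms in (\ref{a-Ric-1}), so that only a scalar multiple of $h^2$ survives. The Lie-derivative bookkeeping in the third paragraph is more mechanical, but the scalar $V(\lambda)$ that ties the two sides together must be handled carefully so that the coefficients in (\ref{LV-h2})--(\ref{LVW0}) line up consistently.
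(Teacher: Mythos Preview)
Your route via Theorem~\ref{thm12} and the dictionary $h^2=\lambda(\alpha^2-\beta^2)$, $W_0=-\lambda\beta$ is in principle sound, but it is genuinely different from (and considerably heavier than) the paper's argument. The paper never translates (\ref{a-Ric-1}) term by term through navigation formulas. Instead, it works in the variable $\xi=y-FW$ (so that $\tilde h(\xi)=F(y)$) and invokes two ready-made ingredients: Lemma~\ref{lem53} (the Cheng--Shen identity), which relates $\mathrm{Ric}(F)$ directly to $\widetilde{\mathrm{Ric}}={}^h\mathrm{Ric}_{ij}\xi^i\xi^j$, and Lemma~\ref{lem52}, which computes $\mathcal L_{\hat V}(\tilde h^2)$ in $\xi$-coordinates. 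Feeding the soliton equation into their combination collapses everything to a single identity $P(\xi)+\tilde h\,Q(\xi)=0$; separating the rational and irrational parts in $\tilde h$ yields two polynomial equations in $\xi$, and since $W\neq 0$ one of them forces ${}^h\mathrm{Ric}_{ij}\xi^i\xi^j$ to be divisible by $\tilde h^2$---that is how the Einstein condition on $h$ appears. Equations (\ref{LV-h2})--(\ref{LVW0}) then drop out after differentiating in $\xi$. This bypasses entirely the rewriting of $t_{00},s_0,s_{0;0},t^i{}_i$ and of ${}^\alpha\mathrm{Ric}$ in navigation data, which is precisely the ``principal obstacle'' you flag and never actually carry out.

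Two specific cautions on your outline. First, $V(\lambda)$ is not obtained ``from the conformality of $W$ for $h$'': that condition governs covariant derivatives of $W$, not the action of $V$. One has $V(\lambda)=-V(b^2)=-2V^k(r_k+s_k)$, and the value of $V^k(r_k+s_k)$ comes from contracting (\ref{tau-0-1}) with $b^i$ (this is exactly how (\ref{tau-b}) arises in the proof of Lemma~\ref{lem47}); only then does one find $V(\lambda)=-6(n-1)\lambda\,\sigma_iW^i$, which is what your Leibniz bookkeeping needs. Second, be careful that the $c$ in Theorem~\ref{thm12} (the $\alpha$-conformal factor of $V$) and the $c$ defined in the statement of Theorem~\ref{thm13*} are a priori different quantities; if you run the Lie-derivative matching honestly you will find the latter equals twice the former, and this factor of $2$ must be tracked when you claim $\mu$ is ``pinned down'' by the stated relation.
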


When $V=0$,  Randers almost Ricci solitons are just Einstein metrics (i.e., trivial Ricci solitons). In this case, (\ref{LV-h2}) implies that $\sigma$ is constant by the irreducibility of $h^2$ and hence $c=0$. Note that every Riemannian Einstein metric must be Ricci-constant when $n\geq 3$ by Schur's Lemma. Thus Theorem \ref{thm13*} is reduced to Bao-Robles' result, which actually classify Einstein Randers metrics up to the classifications of Einstein Riemannian metrics $h$ and of the homothetic vector fields for $h$.
\begin{cor} \label{cor13} (\cite{BR}) Let $F$ be a non-Riemannian Randers metric with navigation data $(h, W)$ on an $n(\geq 2)$-dimensional manifold $M$. Then $(M, F)$ is an Einstein metric with Einstein scalar $\kappa$ if and only if $h$ is an Einstein metric with Einstein scalar $\mu$ and $W$ is a homothetic vector field of $h$ with dilation $-\sigma$, where $\mu=\kappa+(n-1)\sigma^2$. In particular, $\kappa$, $\mu$ must be constants when $n\geq 3$.
\end{cor}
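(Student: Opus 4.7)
The plan is to derive this corollary as the specialization $V = 0$ of Theorem \ref{thm13*}, since an Einstein Randers metric is by definition a trivial almost Ricci soliton with vanishing soliton vector field. Applying Theorem \ref{thm13*} with $V = 0$ immediately yields that $h$ is Einstein with some scalar $\mu$, that $W$ is a conformal vector field of $h$ with factor $-\sigma$, and that the structural identities \eqref{LV-h2} and \eqref{LVW0} must hold with vanishing left-hand sides.

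The core of the argument is to upgrade ``conformal'' to ``homothetic''. Setting $\mathcal{L}_{\hat V}(h^2) = 0$ in \eqref{LV-h2} and polarizing in $y$, one obtains
\[
\bigl[\,2c - 6(n-1)\sigma_k W^k\,\bigr]\,h_{ij} \;=\; 3(n-1)\,\bigl(\sigma_i W_j + \sigma_j W_i\bigr).
\]
The left-hand side is a scalar multiple of the positive definite tensor $h_{ij}$, while the right-hand side is the symmetric product of the two $1$-forms $d\sigma$ and $W^\flat$, hence a symmetric tensor of rank at most two. Choosing an $h$-orthonormal frame with $W^\sharp$ along $e_1$, the matrix $\sigma_i W_j + \sigma_j W_i$ has nonzero entries confined to its first row and column; demanding that it be a scalar multiple of the identity forces $\sigma$ to vanish in that frame (the off-diagonal entries kill all components of $\sigma$ transverse to $W$, while the mismatch between the $(1,1)$ entry $2\sigma_1\|W\|$ and the vanishing other diagonals kills the remaining component). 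Since $F$ is non-Riemannian, $W \not\equiv 0$, so $d\sigma \equiv 0$ and $\sigma$ is constant. Consequently $W$ is a homothetic vector field of $h$ with dilation $-\sigma$, the bracketed scalar above vanishes so that $c = 0$, and the definition $c = \kappa - \mu + (n-1)\sigma^2 + 2(n-1)\sigma_i W^i$ collapses to $\mu = \kappa + (n-1)\sigma^2$. Equation \eqref{LVW0} is then trivially satisfied.

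For the converse direction I would simply verify: if $h$ is Einstein with scalar $\mu$, $W$ is homothetic for $h$ with dilation $-\sigma$ (so $\sigma$ is constant), and $\mu = \kappa + (n-1)\sigma^2$, then $d\sigma \equiv 0$ and $c = 0$, so both \eqref{LV-h2} and \eqref{LVW0} with $V = 0$ hold automatically; Theorem \ref{thm13*} then asserts that $(M, F, 0)$ is an almost Ricci soliton, that is, $F$ is Einstein with scalar $\kappa$. Finally, when $n \geq 3$, Schur's Lemma applied to the Einstein Riemannian metric $h$ forces $\mu$ to be constant, whence $\kappa = \mu - (n-1)\sigma^2$ is constant as well.

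The principal obstacle I anticipate is the algebraic rank/frame argument that constrains $\sigma$ to be constant; the remainder is essentially bookkeeping once Theorem \ref{thm13*} and Schur's Lemma are in hand.
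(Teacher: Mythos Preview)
Your proposal is correct and follows essentially the same route as the paper: specialize Theorem~\ref{thm13*} to $V=0$, use the vanishing of the left-hand side of \eqref{LV-h2} to force $\sigma$ constant (hence $W$ homothetic and $c=0$, giving $\mu=\kappa+(n-1)\sigma^2$), then invoke Schur's Lemma for $n\geq 3$. The only difference is cosmetic: where you polarize \eqref{LV-h2} and run an explicit orthonormal-frame rank argument, the paper simply notes that the positive definite quadratic $h^2$ is an irreducible polynomial in $y$, so the identity $\bigl[2c-6(n-1)\sigma_kW^k\bigr]h^2=6(n-1)\sigma_0W_0$ forces both sides to vanish; this is the same linear algebra packaged more succinctly.
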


Similar to Corollary \ref{cor11}, one obtains the following result from Theorem \ref{thm13*}. Its proof will be given at the end of \S \ref{sec5}.
\begin{cor} \label{cor13*}  Let $F$ be a non-Riemannian Randers metric with navigation data $(h, W)$ on an $n(\geq 2)$-dimensional manifold $M$. If there is a vector field $V$ such that $(M, F, V)$ is an almost Ricci soliton, then $h$ is Einstein and $W$ is a conformal vector field of $h$. In this case, $F$ is of isotropic S$_{BH}$-curvature.

Further, if $h$ satisfies $\mathcal L_{\hat V}(h^2)=2ch^2$ or $W$ satisfies $\mathcal L_{\hat V}(W_0)=c W_0$ with $\|W\|_h^2\neq \frac 13$, where $c$ is defined as in Theorem \ref{thm13*}, then $h$ is Einstein, $W$ is a homothetic vector field of $h$ with dilation $-\sigma$ and $V$ is a homothetic vector field of $F$ with dilation $\frac c2$. In this case, $F$ is of constant S$_{BH}$-curvature.
 % and $(M, F, V)$ is a Ricci soliton with soliton constant $\mu+c-(n-1)\sigma^2$.
\end{cor}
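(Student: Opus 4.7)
The first assertion is essentially a direct reading of Theorem \ref{thm13*}: being an almost Ricci soliton forces $h$ to be Einstein with some Einstein scalar $\mu$ and $W$ to be a conformal vector field of $h$ with conformal factor $-\sigma$, and Lemma \ref{lem51} then translates the conformality of $W$ into isotropic S$_{BH}$-curvature of $F$. For the second part, the strategy is to show that \emph{either} of the stated hypotheses forces $\sigma$ to be constant; once $\sigma$ is constant, $W$ becomes homothetic for $h$ (so $F$ has constant S$_{BH}$-curvature by Lemma \ref{lem51}), and the equations (\ref{LV-h2})--(\ref{LVW0}) simplify cleanly to $\mathcal L_{\hat V}(h^2) = 2ch^2$ and $\mathcal L_{\hat V}(W_0) = cW_0$.

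To show $\sigma$ is constant in the first case, I would subtract the hypothesis $\mathcal L_{\hat V}(h^2) = 2ch^2$ from (\ref{LV-h2}), obtaining the polynomial identity $(\sigma_iW^i)h^2 + \sigma_0 W_0 = 0$ on $TM$. Rewriting this as the symmetric tensor equation
\[ 2(\sigma_kW^k)h_{ij} + \sigma_i W_j + \sigma_j W_i = 0, \]
tracing with $h^{ij}$ yields $(n+1)\sigma_kW^k = 0$, so $\sigma_kW^k=0$ and $\sigma_i W_j + \sigma_j W_i = 0$; contracting this once more with $\sigma^j$ gives $\|\sigma\|_h^2\, W_i = 0$. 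Since $F$ is non-Riemannian, $W$ cannot vanish on any open set (a conformal field for $h$ vanishing on an open set is identically zero by Killing-type uniqueness), so $d\sigma = 0$ on a dense open subset and hence on all of $M$. In the second case, subtracting $\mathcal L_{\hat V}(W_0) = cW_0$ from (\ref{LVW0}) yields the tensor identity $2(\sigma_kW^k)W_i = \lambda\sigma_i$; contracting with $W^i$ gives $(\sigma_kW^k)(3\|W\|_h^2 - 1) = 0$, and the hypothesis $\|W\|_h^2 \neq 1/3$ forces $\sigma_kW^k=0$, whence $\lambda\sigma_i = 0$ and so $\sigma_i = 0$ (since $\lambda > 0$).

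Finally, I would verify that $V$ is homothetic for $F$ with dilation $c/2$ by applying $\mathcal L_{\hat V}$ to the squared navigation identity $\lambda F^2 + 2FW_0 = h^2$ and substituting the simplified conditions on $h^2$ and $W_0$. The crucial auxiliary fact is $V(\lambda) = 0$: translating the simplified relations to tensor form gives $\mathcal L_V h_{ij} = 2c\, h_{ij}$ and $\mathcal L_V W_i = c\, W_i$, hence $\mathcal L_V h^{ij} = -2c\, h^{ij}$, and therefore $V(\|W\|_h^2) = -2c\|W\|_h^2 + 2c\|W\|_h^2 = 0$. After substitution and cancellation by the positive factor $\lambda F + W_0 = \sqrt{\lambda h^2 + W_0^2}$, one arrives at $\mathcal L_{\hat V}(F) = cF$, which in the paper's convention (cf.\ Corollary \ref{cor11}, where $\mathcal L_{\hat V}(F) = 2cF$ corresponds to conformal factor $c$) is precisely the homothetic condition with dilation $c/2$. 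The main subtlety I anticipate is this last step, since it requires the two simplified conditions on $h$ and $W$ to cooperate so as to produce $V(\lambda)=0$ and then collapse the navigation identity to $\mathcal L_{\hat V}(F)=cF$ in one stroke.
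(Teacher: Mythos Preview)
Your argument is largely sound and tracks the paper closely through the first claim and both case analyses. In particular, your treatment of the case $\mathcal L_{\hat V}(h^2)=2ch^2$ via the tensor identity $2(\sigma_kW^k)h_{ij}+\sigma_iW_j+\sigma_jW_i=0$ is a correct alternative to the paper's one-line irreducibility argument (the paper simply observes that $(\sigma_iW^i)h^2+\sigma_0W_0=0$ forces $\sigma$ constant because $h^2$ is irreducible in $y$ while $\sigma_0W_0$ is a product of linear forms). Your treatment of the second case is essentially identical to the paper's.

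There is, however, a genuine gap in your final step. Your navigation-identity computation correctly yields $\mathcal L_{\hat V}(F)=cF$, but this is the \emph{conformal} condition with conformal factor $\tfrac c2$, not the homothetic condition: homothety requires $c$ to be constant, and nothing in your argument establishes that. (A priori $c=\kappa-\mu+(n-1)\sigma^2$ after $\sigma_i=0$, and $\kappa$ is allowed to be a function.) The paper closes this gap differently: having obtained $\mathcal L_{\hat V}(h^2)=2ch^2$ and $\mathcal L_{\hat V}(W_0)=cW_0$, it invokes Proposition~\ref{prop22} to conclude that $V$ is conformal for $F$ with factor $\tfrac c2$, and then cites Theorem~1.1 of \cite{HM} (also Theorem~1.2 of \cite{SX}), which asserts that any conformal vector field of a Randers metric with isotropic $S_{BH}$-curvature is automatically homothetic. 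Your derivation via the squared navigation identity is effectively a re-proof of the implication $(3)\Rightarrow(1)$ in Proposition~\ref{prop22}; what you are missing is the external rigidity input from \cite{HM} (or an independent proof that $c$ is constant).
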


Corollaries \ref{cor11} and \ref{cor13*} respectively give a sufficient condition such that $F$ is of constant S$_{BH}$-curvature. Under this condition, we can classify Randers almost Ricci solitons (resp., Ricci solitons) $(M, F, V)$ up to the classifications of Randers Einstein metrics $F$ and homothetic vector fields for $F$ by Theorem \ref{thm13*} and Corollaries \ref{cor13}-\ref{cor13*}. More precisely, we have
\begin{cor} \label{cor15} Let $F$ be a non-Riemannian Randers metric with navigation data $(h, W)$ on an $n(\geq 2)$-dimensional manifold $M$ and $V$ be a vector field on $M$. If $F$ is of constant S$_{BH}$-curvature $\sigma$, then $(M, F, V)$ is an almost Ricci soliton with soliton scalar $\kappa$ if and only if one of the following statements holds.

 (1) $h$ is Einstein with Einstein scalar $\mu$, $W$ is a homothetic vector field of $h$ with dilation $-\sigma$ and $V$ is a homothetic vector field of $F$ with dilation $\frac 12c$, where $\kappa=\mu+c-(n-1)\sigma^2$. In particular, both $\kappa$ and $\mu$ are constants when $n\geq 3$.

 (2) $F$ is Einstein with Einstein scalar $\nu$ and $V$ is a homothetic vector field of $F$ with dilation $\frac 12c$, where $\kappa=\nu+c$. In particular, both $\kappa$ and $\nu$ are constants when $n\geq 3$.
\end{cor}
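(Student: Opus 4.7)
The plan is to combine Theorem \ref{thm13*}, Corollary \ref{cor13}, and Corollary \ref{cor13*}, using the hypothesis that $F$ has constant $\mathrm{S}_{BH}$-curvature $\sigma$ (so $\sigma$ is a constant on $M$) to collapse the auxiliary PDE system for $V$. The preliminary reduction is that $\sigma_i \equiv 0$ and hence $\sigma_0 \equiv 0$, so equations (\ref{LV-h2})--(\ref{LVW0}) of Theorem \ref{thm13*} degenerate to $\mathcal L_{\hat V}(h^2) = 2c\, h^2$ and $\mathcal L_{\hat V}(W_0) = c\, W_0$ with $c := \kappa - \mu + (n-1)\sigma^2$, and ``$W$ is a conformal vector field of $h$ with factor $-\sigma$'' automatically upgrades to ``$W$ is a homothetic vector field of $h$ with dilation $-\sigma$.''

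For necessity, assume $(M, F, V)$ is an almost Ricci soliton with soliton scalar $\kappa$. I would first apply Theorem \ref{thm13*}; the preceding reduction then verifies the hypothesis of the second part of Corollary \ref{cor13*} (in fact both of its auxiliary equations hold), so $V$ is a homothetic vector field of $F$ with dilation $c/2$, and in particular $c$ is a constant. Feeding the pair ($h$ Einstein with scalar $\mu$, $W$ homothetic for $h$ with dilation $-\sigma$) into Corollary \ref{cor13} yields the complementary fact that $F$ is Einstein with scalar $\nu = \mu - (n-1)\sigma^2$. These two readings give both the navigation description (1) and the Randers description (2), with the scalars reconciled by $\kappa = \mu + c - (n-1)\sigma^2 = \nu + c$.

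For sufficiency, assume (2). Then $F$ Einstein gives $2\,\mathrm{Ric} = 2\nu F^2$, while $V$ homothetic of $F$ with dilation $c/2$ unpacks to $\mathcal L_{\hat V}(F^2) = 2c F^2$; adding recovers the soliton equation (\ref{RS-kappa}) with $\kappa = \nu + c$, so $(M, F, V)$ is an almost Ricci soliton. Since statement (1) implies statement (2) by Corollary \ref{cor13}, (1) also suffices.

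Finally, for $n \geq 3$, Schur's lemma applied to the Riemannian Einstein metric $h$ forces $\mu$ to be constant in (1), while the Randers Schur-type rigidity built into Corollary \ref{cor13} forces $\nu$ to be constant in (2); together with the constancy of $\sigma$ and of $c$ (which is guaranteed by homotheticity of $V$), this makes $\kappa$ constant in both cases. The chief difficulty is essentially bookkeeping --- tracking the paper's conventions for ``conformal factor'' and ``dilation'' so that the collapsed PDE system matches the hypothesis of Corollary \ref{cor13*} verbatim and the resulting identity $\mathcal L_{\hat V}(F^2) = 2c F^2$ plugs correctly into (\ref{RS-kappa}).
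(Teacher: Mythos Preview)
Your proposal is correct and follows essentially the same route the paper indicates: the paper does not spell out a proof of Corollary~\ref{cor15} but states just before it that the result follows from Theorem~\ref{thm13*} and Corollaries~\ref{cor13}--\ref{cor13*}, which is exactly the combination you assemble. In particular, your observation that the constancy of $\sigma$ collapses (\ref{LV-h2})--(\ref{LVW0}) to $\mathcal L_{\hat V}(h^2)=2c\,h^2$ and $\mathcal L_{\hat V}(W_0)=c\,W_0$, so that the first branch of the ``Further'' clause in Corollary~\ref{cor13*} applies without the $\|W\|_h^2\neq\tfrac13$ restriction, is the key reduction the paper has in mind.
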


Conformal vector fields on a Randers manifold were studied in \cite{SX}. In particular, the authors give a classification of conformal (especially, homothetic) vector fields on an $n(\geq 3)$-dimensional Randers manifold of constant flag curvature. Next we further consider the characterization of Randers gradient almost Ricci solitons.
% When $n=3$,  Einstein Randers manifolds are those of constant flag curvature by Schur's Lemma. Thus we can obtain all Randers almost Ricci solitons when $n=3$ and $F$ is of constant S$_{BH}$-curvature.  It is a natural question to classify conformal (esp. homothetic) vector fields on an $n(>3)$-dimensional Einstein Randers manifold. We will argue this question somewhere else.

Given a smooth measure $m$ on a Randers manifold $(M, F)$, the volume form $dm$ determined by $m$ can be written as $dm=e^{-f}dm_{BH}$ for some smooth function $f$ on $M$, where $dm_{BH}$ is the volume form on $M$ determined by the Busemann-Hausdorff measure $m_{BH}$. In this case, $f$ is called the {\it weighted function} of $m$, here we regard $m_{BH}$ as a standard measure on $(M, F)$.  When $F=g$ is Riemannian, $dm=e^{-f}dV_g$, which is exactly the weighted measure on $(M, g)$.
\begin{thm}\label{thm14} Let $F=\alpha+\beta$ be a Randers metric on an $n$-dimensional measure space $(M, m)$ and $f$ be the weighted function of $m$. Then $(M, F, m)$ is a gradient almost Ricci soliton with soliton scalar $\kappa$ if and only if there is a function $\sigma$ on $M$ such that
\beq e_{00}&=&2\sigma(\alpha^2-\beta^2),\label{GRS-e00}\\
{}^{\alpha}{\rm Ric}&=&\kappa(\alpha^2+\beta^2)+2t_{00}+t^i_{\ i}\alpha^2-2n\sigma_0\beta\nonumber \\
 & &-(n-1)\left(s_0^2+s_{0;0}+3\sigma^2\alpha^2-\sigma^2\beta^2\right)-2(s_0+\sigma\beta)f_{0}-{\rm Hess}_\alpha(f)(y),\label{GRS-Ric-a}\eeq with
\beq (2n-1)(1-b^2)\sigma_0=\sigma(1+b^2)f_{0}+f_i(s^i_{\ 0}-s^i\beta)+f_{;0j}b^j+(s_0+2\sigma\beta)(f_{i}b^i),\label{sigma0-f}\eeq where $f_{k}, f_{;ij}$ are respectively the first, second order covariant derivatives of $f$  and ${\rm Hess}_\alpha(f)$ is the Hessian of $f$ with respect to $\alpha$. In this case,  $S_{BH}=(n+1)\sigma F$ . In particular, $\sigma$ is constant if $f$ satisfies
\beq \sigma(1+b^2)f_{0}+f_i(s^i_{\ 0}-s^i\beta)+f_{;0j}b^j+(s_0+2\sigma\beta)(f_{i}b^i)=0.\label{f-eq}\eeq
 \end{thm}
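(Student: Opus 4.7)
The plan is to specialize the almost Ricci soliton characterization in Theorem \ref{thm12} to the gradient setting and then translate the resulting conditions into PDEs on the weight function $f$. Assume $(M, F, m)$ is a gradient almost Ricci soliton with soliton scalar $\kappa$. By Definition 1.2 there is a vector field $V$ with $V_x = ({\rm grad}_{g_Y}\psi)(x)$, and by Theorem \ref{thm12}, $V$ must be a conformal vector field of $\alpha$ with some conformal factor $c$ satisfying equations (\ref{e00-1})--(\ref{tau-0-1}) for some scalar function $\sigma$ on $M$. Equation (\ref{e00-1}) is exactly (\ref{GRS-e00}), and by Lemma \ref{lem-41} this forces $S_{BH}=(n+1)\sigma F$.

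Next I would identify $V$ explicitly in terms of $f$. Decomposing $dm = e^{-f}dm_{BH} = e^{-(f+\tau_{BH})}dV_{g_Y}$ along a geodesic extension, where $\tau_{BH}(x,y)$ is the distortion of $F$ relative to the Busemann--Hausdorff measure, yields $\psi = f + \tau_{BH}$. For Randers metrics $\tau_{BH}$ admits a closed form in $(a_{ij},b_i)$, so computing its $g_Y$-gradient produces an explicit expression of the shape $V^i = a^{ij}f_{x^j}+(\text{terms in }b^i, s^i{}_{\ j},\sigma)$. Substituting this $V$ into the conformal condition $V_{i;j}+V_{j;i}=4ca_{ij}$ determines $c$ as an expression in $f$ and the Randers data; feeding it into (\ref{a-Ric-1})--(\ref{tau-0-1}) and separating the resulting polynomial identities in $y$ into $\alpha$-rational and $\alpha$-irrational parts produces (\ref{GRS-Ric-a}) and (\ref{sigma0-f}). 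In particular, the terms $-2(s_0+\sigma\beta)f_0$ and $-{\rm Hess}_\alpha(f)(y)$ in (\ref{GRS-Ric-a}) arise from $\mathcal L_{\hat V}(F^2)$ once $V$ is unfolded in terms of $f$. Conversely, given $f,\sigma$ satisfying (\ref{GRS-e00})--(\ref{sigma0-f}), one defines $V$ by the same explicit formula and verifies all the hypotheses of Theorem \ref{thm12}, recovering the gradient almost Ricci soliton structure.

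The final assertion is immediate: if $f$ satisfies (\ref{f-eq}), then (\ref{sigma0-f}) collapses to $(2n-1)(1-b^2)\sigma_0=0$, and since $b<1$ by definition of a Randers metric, $\sigma_0 \equiv 0$ on $TM$, so $\sigma$ is constant on $M$.

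The main obstacle is the explicit computation of ${\rm grad}_{g_Y}\tau_{BH}$ and the subsequent algebraic bookkeeping. The $y$-dependence of $g_Y$ causes the gradient to split into a pure $\alpha$-gradient of $f$ plus correction terms involving $b^i$, $s^i{}_{\ j}$, and $\sigma$; matching these corrections against the precise coefficients of $s_0$, $\sigma\beta$ and $f_ib^i$ in (\ref{GRS-Ric-a}) and (\ref{sigma0-f}) is delicate. The $\alpha$-rational/irrational decomposition standard in Randers geometry will be essential to isolate clean coefficient identities, and the appearance of the factor $(2n-1)(1-b^2)$ rather than the more familiar $3(n-1)$ from (\ref{tau-0-1}) is the signal that the $b^i$-corrections coming from $\tau_{BH}$ genuinely contribute to the compatibility equation for $\sigma_0$.
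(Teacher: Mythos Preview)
Your plan has a genuine logical gap at the very first step. Theorem~\ref{thm12} characterizes almost Ricci solitons $(M,F,V)$ for a \emph{fixed} vector field $V$ satisfying (\ref{RS-kappa}) at \emph{every} $(x,y)\in TM$. But Definition~1.2 only asserts, for each $(x,y)$ separately, the existence of some $V$ with $V_x=(\mathrm{grad}_{g_Y}\psi)(x)$ satisfying (\ref{RS-kappa}) at that particular $(x,y)$. Since $\psi=f+\tau_{BH}(\cdot,Y)$ depends on the geodesic extension $Y$ of $y$, the resulting $V$ genuinely varies with $y$, and there is no single global $V$ to which Theorem~\ref{thm12} applies. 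Your identification ``$V$ is conformal for $\alpha$'' therefore does not follow. The subsequent outline of computing $\mathrm{grad}_{g_Y}\tau_{BH}$ and feeding it through (\ref{a-Ric-1})--(\ref{tau-0-1}) is never going to close because those equations were derived under the hypothesis of a $y$-independent $V$.

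The paper avoids this entirely. It first invokes Theorem~\ref{thm11} to replace the gradient soliton condition by the scalar equation $\mathrm{Ric}+\dot S=\kappa F^2$ on $TM$, which involves no auxiliary $V$ at all. With Proposition~\ref{prop61} supplying $e_{00}=2\sigma(\alpha^2-\beta^2)$, one substitutes the Bao--Robles formula for $\mathrm{Ric}$ and the explicit expression $\dot S=(n+1)\sigma_0 F-2\sigma(\alpha-\beta)f_0+2s_0f_0-2(f_ks^k_{\ 0})\alpha+f_{;00}$, then separates rational and $\alpha$-irrational parts to obtain (\ref{GRS-Ric-a}) together with an auxiliary equation for $s^i_{\ 0;i}$ (this is Theorem~\ref{thm61}). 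Finally, the contracted identity $s^i_{\ 0;i}={}^{\alpha}\mathrm{Ric}_{0j}b^j+r^i_{\ i;0}-r^i_{\ 0;i}$ is used (Lemma~\ref{lem61}) to show that, under (\ref{GRS-e00}) and (\ref{GRS-Ric-a}), the $s^i_{\ 0;i}$-equation is equivalent to (\ref{sigma0-f}); this is Lemma~\ref{lem62}. The factor $(2n-1)(1-b^2)$ you noticed emerges from this contraction step, not from any $\tau_{BH}$-gradient correction. Your argument for the final assertion about $\sigma$ being constant when (\ref{f-eq}) holds is correct.
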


When $\kappa$ is constant, we obtain an equivalent characterization for Randers gradient Ricci solitons from Theorem \ref{thm14}. Recently, Mo-Zhu-Zhu also gave an equivalent characterization for Randers gradient Ricci solitons (\cite{MZZ}, Theorem 1.1). In fact, (1.3)-(1.5) in  Theorem 1.1 of \cite{MZZ} essentially coincide with (\ref{GRS-e00}) and (\ref{G-si0i})-(\ref{G-Ric-a}) in Theorem \ref{thm61} (see Remark \ref{rm61}). However, the proof of Theorem \ref{thm61} is simpler. In present paper, we further show that (1.4) in \cite{MZZ} (i.e., (\ref{G-si0i}) in Theorem \ref{thm61}) is equivalent to (\ref{sigma0-f}) under the assumptions (\ref{GRS-e00})-(\ref{GRS-Ric-a}) (see Lemma \ref{lem62} in \S 6). Thus Theorem \ref{thm14} is more elaborate than Theorem 1.1 in \cite{MZZ} and  (\ref{sigma0-f}) gives a convenient criteria to judge when $F$ is of constant S$_{BH}$-curvature. Obviously,  (\ref{sigma0-f}) implies that $\sigma$ is constant when $f$ is constant. In this case, $\dot S\equiv 0$ and Randers gradient almost Ricci solitons are reduced to Einstein metrics (i.e., trivial gradient almost Ricci solitons) and Theorem \ref{thm14} is reduced to Corollary \ref{cor12}. In the following we give a navigation description for Randers gradient almost Ricci solitons.  We denote by $W_{i:j}$ the covariant derivative of $W$ with respect to the Levi-Civita connection of $h$ and let
\beq \mathcal R_{ij}:=\frac 12 (W_{i:j}+W_{j:i}), \ \ \ \ \mathcal S_{ij}:=\frac 12 (W_{i:j}-W_{j:i}).\label{RS} \eeq We use $h_{ij}$ to raise and
lower the indices of $W_i$, $\mathcal R_{ij}$, $\mathcal S_{ij}$. We denote by $W_0$ and $\mathcal S^i_{\ 0}$  the contractions of $W_i$ and $\mathcal S^j_{\ i}$ with $y^i$ respectively. Similarly, for any function $f$ on $M$, we also denote by $f_0$ the contraction of the ordinary derivative $f_i: =f_{x^i}$ with $y^i$ throughout the paper.

\begin{thm} \label{thm15} Let $F$ be a Randers metric with navigation data $(h, W)$ on an $n$-dimensional measure space $(M, m)$ and $f$ be the weighted function of $m$. Then $(M, F, m)$ is a gradient almost Ricci soliton with soliton scalar $\kappa$ if and only if there are scalar functions $\mu$ and $\sigma$ on $M$ such that $(M, h, f)$ is a Riemannian gradient almost Ricci soliton with  soliton scalar $\mu$, $W$ is a conformal vector field of $h$ with conformal factor $-\sigma$ and $f$ satisfies
\beq & (2n-1)\sigma_0 =\sigma  f_0-f_k{\mathcal S}^k_{\ 0}- f_{:0j}W^j,\label{sfSWx*}\\
& (\sigma_i-\sigma f_i)W^i=\kappa-\mu+(n-1)\sigma^2. \label{H-hfW} \eeq   In this case, $S_{BH}=(n+1)\sigma F$. In particular, $\sigma$ is constant if
$\sigma  f_0-f_k{\mathcal S}^k_{\ 0}- f_{:0j}W^j=0.$
\end{thm}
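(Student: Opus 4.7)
The plan is to derive Theorem \ref{thm15} from Theorem \ref{thm14} by translating every $(\alpha,\beta)$-quantity appearing in (\ref{GRS-e00})--(\ref{sigma0-f}) into its navigation counterpart in $(h,W)$, in complete analogy with how Theorem \ref{thm13*} is obtained from Theorem \ref{thm12}, but with extra bookkeeping for the weight function $f$. By Theorem \ref{thm14}, $(M,F,m)$ is a gradient almost Ricci soliton with soliton scalar $\kappa$ if and only if (\ref{GRS-e00}), (\ref{GRS-Ric-a}) and (\ref{sigma0-f}) hold for some scalar function $\sigma$. So it suffices to show that, under the navigation change $F\leftrightarrow (h,W)$, this system is equivalent to: (a) $W$ is $h$-conformal with factor $-\sigma$, (b) $(M,h,f)$ is a Riemannian gradient almost Ricci soliton with soliton scalar $\mu$, and (c) the two equations (\ref{sfSWx*})--(\ref{H-hfW}) are satisfied.

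First I would translate (\ref{GRS-e00}) via Lemma \ref{lem51}: the identity $e_{00}=2\sigma(\alpha^2-\beta^2)$ is equivalent to $F$ being of isotropic $S_{BH}$-curvature $\sigma$, which in turn is equivalent to $\mathcal R_{ij}=-\sigma h_{ij}$, i.e.\ $W$ is conformal for $h$ with factor $-\sigma$. With this conformal condition in hand, I would substitute the standard navigation identities (already used in the proof of Theorem \ref{thm13*} and collected in \S 5) expressing $\alpha^2+\beta^2$, $\beta$, $s_0$, $s_{0;0}$, $s_0^2$, $t_{00}$, $t^i_{\ i}$ and ${}^{\alpha}\mathrm{Ric}$ in terms of $h^2$, $W_0$, $\mathcal S^i_{\ 0}$, $\sigma$, $\sigma_0$ and ${}^h\mathrm{Ric}$. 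In parallel I would convert $\mathrm{Hess}_\alpha(f)(y)$ to $\mathrm{Hess}_h(f)(y)$; the difference between the two Hessians is controlled by the spray-level difference of the Levi-Civita connections of $\alpha$ and $h$, which, on account of the conformal relation $\mathcal R_{ij}=-\sigma h_{ij}$, produces only terms linear in $f_i$, $W^i$, $\sigma_0$ and $\mathcal S^i_{\ 0}$.

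Plugging these expansions into (\ref{GRS-Ric-a}) and reorganising, all $W_0^2$- and $\beta$-terms should cancel after using the conformal identity on $W$, leaving a $y$-quadratic identity that factors as
\[
{}^h\mathrm{Ric}(y)+\mathrm{Hess}_h(f)(y)-\mu\, h^2
+\bigl\{(\sigma_i-\sigma f_i)W^i-\kappa+\mu-(n-1)\sigma^2\bigr\}\cdot(\text{nonvanishing factor})=0.
\]
Setting the first bracket to zero is exactly the Riemannian gradient almost Ricci soliton equation for $(M,h,f)$ with scalar $\mu$, and setting the second bracket to zero is exactly (\ref{H-hfW}). Separately, substituting the navigation expansions of $s^i_{\ 0}$, $s^i$, $f_{;0j}b^j$ and $f_ib^i$ into (\ref{sigma0-f}) and using $\mathcal R_{ij}=-\sigma h_{ij}$ should collapse the left-hand side to $(2n-1)\sigma_0$ and the right-hand side to $\sigma f_0-f_k\mathcal S^k_{\ 0}-f_{:0j}W^j$, giving (\ref{sfSWx*}). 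The converse direction consists of reversing these substitutions. The final assertion $S_{BH}=(n+1)\sigma F$ and the constancy of $\sigma$ under $\sigma f_0-f_k\mathcal S^k_{\ 0}-f_{:0j}W^j=0$ follow immediately from (\ref{sfSWx*}).

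The main obstacle is the clean separation in the displayed identity above. The translations of ${}^{\alpha}\mathrm{Ric}\to{}^h\mathrm{Ric}$ and $\mathrm{Hess}_\alpha(f)\to\mathrm{Hess}_h(f)$ each generate a number of stray terms of the form $(\sigma_iW^i)h^2$, $\sigma_0W_0$, $(W^if_i)\sigma_0$ and $f_k\mathcal S^k_{\ 0}$; showing that, modulo the equation (\ref{sfSWx*}) and the conformal identity for $W$, all such terms combine into the single scalar factor $(\sigma_i-\sigma f_i)W^i$ multiplying $h^2$ (and nothing else) is the delicate algebraic step. Organising the computation so that (\ref{sfSWx*}) is peeled off first from the coefficient of $\beta$ (equivalently, of $W_0$) in (\ref{GRS-Ric-a}), leaving only an $h^2$-proportional remainder for (\ref{H-hfW}), is the cleanest route, and avoids the heavier manipulations of Theorem 1.1 in \cite{MZZ}.
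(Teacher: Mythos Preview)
Your route is viable but genuinely different from the paper's.  You propose to take Theorem~\ref{thm14} in the $(\alpha,\beta)$ language and translate term by term to $(h,W)$, which in particular requires the full conversion of ${}^{\alpha}\mathrm{Ric}$ into ${}^h\mathrm{Ric}$ and of $\mathrm{Hess}_\alpha(f)$ into $\mathrm{Hess}_h(f)$.  The paper instead bypasses Theorem~\ref{thm14} entirely: it goes back to the defining relation $\mathrm{Ric}+\dot S=\kappa F^2$, applies the Cheng--Shen identity (Lemma~\ref{lem53}) to replace $\mathrm{Ric}$ by $\widetilde{\mathrm{Ric}}={}^h\mathrm{Ric}_{ij}\xi^i\xi^j$, uses Lemma~\ref{lem63} to write $\dot S$ directly in $(h,W,f)$, and then performs the substitution $y=\xi+FW$, $F=\tilde h$.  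The resulting expression is a polynomial in $\xi$ plus $\tilde h$ times another polynomial; since $\tilde h$ is irrational in $\xi$, the two pieces vanish separately, yielding (\ref{sfSWx*}) from the $\tilde h$-coefficient and a single quadratic relation in $\xi$ from the remainder.  That quadratic relation forces ${}^h\mathrm{Ric}_{ij}+f_{:ij}=\mu h_{ij}$ for some $\mu$, and the scalar leftover, after feeding in the contraction of (\ref{sfSWx*}) with $W$, gives (\ref{H-hfW}).

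What the paper's approach buys is that the $\xi$-variable \emph{mechanically} produces the separation you describe as the ``main obstacle'': there is no need to chase stray terms like $(\sigma_iW^i)h^2$, $\sigma_0W_0$, $(W^if_i)\sigma_0$, because Lemma~\ref{lem53} has already absorbed the Ricci translation and the irrational/rational split in $\xi$ is automatic.  Your route would force you through the explicit Bao--Robles formula relating ${}^{\alpha}\mathrm{Ric}$ to ${}^h\mathrm{Ric}$, which is considerably heavier.  Also, your displayed ``factorised'' identity is not quite how the logic runs: one does not obtain a product that vanishes, but rather a single quadratic in $y$ (or $\xi$) proportional to $h^2$, from which $\mu$ is \emph{defined} and (\ref{H-hfW}) then drops out as the compatibility of the proportionality constant with $\kappa$.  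With that correction your plan goes through, but the $\xi$-substitution is the cleaner device.
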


Theorem \ref{thm15} shows that the underlying Riemannian structure $(M, h, f)$ is itself a gradient almost Ricci soliton if $(M, F, m)$ is a gradient almost Ricci soliton. Recall that $F$ is of constant S$_{BH}$-curvature $\sigma$ if and only if $W$ is a homothetic vector field of $h$ with dilation $-\sigma$. Hence, we have
\begin{cor}\label{cor16}  Under the same assumptions as in Theorem \ref{thm15},  if $F$ is of constant S$_{BH}$-curvature $\sigma$, then $(M, F, m)$ is a gradient almost Ricci soliton with soliton scalar $\kappa$ if and only if $(M, h, f)$ is a Riemannian gradient almost Ricci soliton with  soliton scalar $\mu$ and $f$ satisfies
\beq \sigma  f_0-f_k{\mathcal S}^k_{\ 0}- f_{:0j}W^j=0,\label{f-sigma-hw}\eeq
 where $\mu:=\kappa+(n-1)\sigma^2+\sigma f_iW^i$.

In particular, if $df(W)=0$ (resp., $\sigma=0$), then $(M, F, m)$ is a gradient Ricci soliton with soliton constant $\kappa$ if and only if $(M, h, f)$ is a Riemannian gradient Ricci soliton with soliton constant $\kappa+(n-1)\sigma^2$ (resp., soliton constant $\kappa$),  and $f$ satisfies (\ref{f-sigma-hw}) (resp., $f$ satisfies $f_k{\mathcal S}^k_{\ 0}+f_{:0j}W^j=0$).
\end{cor}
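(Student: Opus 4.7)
The plan is to specialize Theorem \ref{thm15} to the case where $F$ has constant $S_{BH}$-curvature. Recall from the discussion preceding Theorem \ref{thm13*} that a Randers metric with navigation data $(h, W)$ is of constant $S_{BH}$-curvature $\sigma$ if and only if $W$ is a homothetic vector field of $h$ with dilation $-\sigma$; in particular $\sigma$ must be a real constant, so $\sigma_i \equiv 0$ and $\sigma_0 = \sigma_i y^i \equiv 0$.

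Substituting these vanishings into the two auxiliary equations (\ref{sfSWx*}) and (\ref{H-hfW}) of Theorem \ref{thm15}, the first becomes precisely $\sigma f_0 - f_k \mathcal S^k_{\ 0} - f_{:0j}W^j = 0$, which is exactly (\ref{f-sigma-hw}), while the second collapses to $-\sigma f_i W^i = \kappa - \mu + (n-1)\sigma^2$, equivalent to $\mu = \kappa + (n-1)\sigma^2 + \sigma f_i W^i$. Theorem \ref{thm15} then yields the claimed equivalence directly: $(M, F, m)$ is a gradient almost Ricci soliton with soliton scalar $\kappa$ if and only if $(M, h, f)$ is a Riemannian gradient almost Ricci soliton with soliton scalar $\mu$ and $f$ satisfies (\ref{f-sigma-hw}). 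Conversely, given these data, all the hypotheses of Theorem \ref{thm15} are met (with $\sigma$ regarded as the constant determined by the homothety of $W$), so the implication reverses.

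For the particular statements, assume in addition that $\kappa$ is constant, i.e.\ $(M, F, m)$ is a gradient Ricci soliton. If $df(W) = 0$, then $f_i W^i = 0$ and hence $\mu = \kappa + (n-1)\sigma^2$ is constant, so the main equivalence specializes to gradient Ricci solitons with the stated constants. If instead $\sigma = 0$, then $\mu = \kappa$ is constant automatically and (\ref{f-sigma-hw}) reduces to $f_k \mathcal S^k_{\ 0} + f_{:0j} W^j = 0$. I do not anticipate any real obstacle here: the whole argument is an algebraic specialization of Theorem \ref{thm15}, and the only point requiring care is identifying the navigation characterization of constant $S_{BH}$-curvature with the condition ``$W$ is $h$-conformal with factor $-\sigma$ and $\sigma$ is a real constant'', which kills all derivatives $\sigma_i$, $\sigma_0$ appearing in (\ref{sfSWx*})--(\ref{H-hfW}).
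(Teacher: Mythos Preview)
Your proposal is correct and follows exactly the paper's approach: the paper states Corollary~\ref{cor16} immediately after Theorem~\ref{thm15} with only the remark that constant $S_{BH}$-curvature $\sigma$ is equivalent to $W$ being homothetic with dilation $-\sigma$, so $\sigma$ is constant and (\ref{sfSWx*})--(\ref{H-hfW}) collapse as you describe.
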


In Riemannian case, every compact gradient steady or expanding Ricci solitons $(M, g)$ is Ricci-constant, i.e., $g$ is Einstein with Ricci constant (\cite{Ha3}). In this case, the function $f$ is necessarily constant.
On the other hand, Perelman's results claim that any compact Riemannnian Ricci soliton is necessarily a gradient Ricci soliton (\cite{Per}). Thus every compact Riemannian steady or expanding Ricci soliton must be Ricci-constant. For shrinking Ricci solions, any two or three dimensional compact shrinking Ricci soliton must be Ricci-constant (\cite{Ha3}, \cite{Iv}). However, in general this is not true. In fact, $\mathbb{CP}^2\# (-\mathbb {CP}^2)$  is a compact non-Einstein shrinking Ricci soliton (\cite{Cao}). From these, Corollaries \ref{cor15}-\ref{cor16} and Corollary \ref{cor13}, one obtains the following rigidity result.
\begin{thm} \label{thm16} Let $F$ be a non-Riemannian Randers metric of constant S$_{BH}$-curvature $\sigma$ on an $n (\geq 2)$-dimensional compact manifold $M$.

(1) If there is a vector field $V$ on $M$ such that $(M, F, V)$ is a Ricci soliton, then $F$ is Ricci-constant and $V$ is a homothetic field of $F$.

(2)  If there is a measure $m$ on $(M, F)$ with the weighted function $f$ such that $(M, F, m)$ is a gradient steady or expanding Ricci soliton and $\sigma=0$, then $F$ is Ricci-constant and $f$ is constant, i.e., $m=m_{BH}$ up to a positive constant. \end{thm}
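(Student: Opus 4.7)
\textbf{Proof plan for Theorem \ref{thm16}.} The strategy is to reduce part (1) to the navigation classification in Corollary \ref{cor15} combined with the Einstein characterization in Corollary \ref{cor13}, and to reduce part (2) to Hamilton's rigidity for compact Riemannian gradient steady or expanding Ricci solitons (recalled in the paragraph preceding the theorem) applied to the Riemannian factor $(M,h,f)$ supplied by Corollary \ref{cor16}.

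For part (1), since $(M,F,V)$ is a Ricci soliton, the soliton scalar $\kappa$ is by definition a constant, and $F$ has constant S$_{BH}$-curvature $\sigma$ by assumption. Thus Corollary \ref{cor15} applies and one of its two alternatives must hold. In both alternatives $V$ is a homothetic vector field of $F$, which immediately yields the second half of the conclusion. For the first half, I would observe that the dilation $c/2$ of $V$ is by definition a constant. In case (1) of Corollary \ref{cor15} the relation $\kappa=\mu+c-(n-1)\sigma^2$ then forces $\mu$ to be constant since $\kappa$, $c$, and $\sigma$ are; Corollary \ref{cor13} applied to the navigation data $(h,W)$ now yields that $F$ is Einstein with constant Einstein scalar $\mu-(n-1)\sigma^2$. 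In case (2) the relation $\kappa=\nu+c$ directly gives that $F$ is Einstein with constant scalar $\nu=\kappa-c$. Either way $F$ is Ricci-constant.

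For part (2), assume $\sigma=0$ and $(M,F,m)$ is a gradient steady or expanding Ricci soliton with constant $\kappa\leq 0$. The $\sigma=0$ branch of Corollary \ref{cor16} then asserts that $(M,h,f)$ is a Riemannian gradient Ricci soliton with the same constant $\kappa$. Hamilton's theorem, as recalled in the introduction, now forces $h$ to be Einstein with constant Ricci scalar $\kappa$ and $f$ to be constant; consequently $dm=e^{-f}\,dm_{BH}$ is a positive constant multiple of $dm_{BH}$, proving the claim on $m$. Finally, since $\sigma=0$ means $W$ is Killing with respect to $h$, Corollary \ref{cor13} upgrades ``$h$ Einstein with constant scalar $\kappa$'' to ``$F$ Einstein with constant scalar $\kappa$'', i.e., $F$ is Ricci-constant.

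The main subtlety I anticipate is the low-dimensional case $n=2$ in part (1): Corollary \ref{cor15} does not automatically provide constancy of $\mu$ or $\nu$ in dimension two, since Schur's lemma fails there. However, the proof sketched above sidesteps this by using that $V$ is homothetic (hence its dilation $c$ is constant) together with the hypothesis that $\kappa$ is a constant (Ricci soliton, not merely almost Ricci soliton). Propagating constancy through the algebraic identities $\kappa=\mu+c-(n-1)\sigma^2$ and $\kappa=\nu+c$ thus closes the $n=2$ gap without any new input; this is the only place where some care is needed.
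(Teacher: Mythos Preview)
Your proof is correct and follows essentially the same route the paper indicates: the paper does not give a detailed argument for Theorem \ref{thm16} but simply remarks that it follows from Corollaries \ref{cor15}--\ref{cor16}, Corollary \ref{cor13}, and Hamilton's rigidity for compact Riemannian gradient steady or expanding solitons, which is precisely the chain you assemble. Your handling of the $n=2$ case in part (1)---deducing constancy of $\mu$ (resp.\ $\nu$) from the constancy of $\kappa$, $\sigma$, and the homothety constant $c$ rather than from Schur's lemma---is exactly right, and your observation that compactness is in fact not used in part (1) is also correct.
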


 Based on Corollary \ref{cor16}, we shall construct shrinking, steady and expanding gradient Ricci solitons for Randers metrics in \S \ref{sec7}, which are the first nontrivial examples of gradient Ricci solitons in Finsler geometry.

\section{Preliminaries} \label{sec2}
In this section, we review some basic concepts and notations in Finsler geometry. We refer to \cite{ChS} and \cite{Sh2} for more details.
\subsection{Connection and curvatures} \label{subsec21}

Let $(M, F)$ be an $n$-dimensional Finsler manifold $M$ and $TM$ ($T^*M$) the tangent (cotangent) bundle of $M$. The geodesic coefficients $G^i$ of $F$ are defined by
\beqn G^i:=\frac 14 g^{il}\{[F^2]_{x^my^l}y^m-[F^2]_{x^l}\},\eeqn where $(g^{ij}):=(g_{ij})^{-1}$, here $g_{ij}:=\frac 12[F^2]_{y^iy^j}$ are the fundamental tensors of $F$.

Let $\pi: TM_0\rightarrow M$ be a projection from the slit tangent bundle $TM_0:=TM\setminus\{0\}$ to $M$.  Denote by $\pi^*TM$ the pull-back tangent bundle and $\pi^*T^*M$ the pull-back cotangent bundle over $TM_0$. Then $\pi^*TM$ admits a unique linear connection, called {\it Chern connection} $D$, which is torsion free and almost compatible with the metric $F$, that is, the connection coefficients $\Gamma_{ij}^k$ of $D$ satisfy  $\Gamma_{ij}^k=\Gamma_{ji}^k$ and
\beq \frac{\partial g_{ij}}{\partial x^k}=g_{il}\Gamma_{jk}^l+g_{lj}\Gamma_{ik}^l+2C_{ijl}N^l_{\ k}, \ \ \ {\rm equivalently}, \ \ \ g_{ij|k}=0,  \label{g-com}\eeq where  $C_{ijk}:=\frac 14[F^2]_{y^iy^jy^k}$ is the Cartan tensors of $F$ and $``|" $ means horizontal covariant derivative with respect to the Chern connection of $F$ (\cite{Sh2}).
A smooth curve $\gamma: [a, b]\rightarrow M$ is called a {\it geodesic} on $(M, F)$ if $\gamma=(\gamma^i)$ satisfies the equation
\beq \ddot \gamma^i(t)+2G^i(\gamma, \dot\gamma)=0\label{geo-eq}\eeq in local coordinates.

For any $x\in M$ and $y\in T_x(M)\setminus\{0\}$, the {\it Riemann curvature} $R_y={R^i}_k(x, y)\frac{\partial}{\partial x^i}\otimes dx^k$ of $F$ is defined by
\begin{eqnarray}{R^i}_{k}=2\frac{\partial G^i}{\partial x^k}-\frac{\partial^2G^i}{\partial x^j\partial y^k}y^j+2G^j\frac{\partial^2 G^i}{\partial y^j\partial
y^k}-\frac{\partial G^i}{\partial y^j}\frac{\partial G^j}{\partial y^k}.\label{Rik1}\end{eqnarray}
The {\it Ricci curvature} of $F$ is defined by ${\rm{Ric}}:=R^i_{\ i}.$  $F$ is said to be an {\it  Einstein metric} with Einstein scalar $\kappa$ if there is a scalar function $\kappa=\kappa(x)$  on $M$ such that
${\rm{Ric}}=\kappa F^2.$ When $F=h$ is Riemannian on $M$, $h$ is an Einstein metric with Ricci scalar $\kappa$ if $^h{\rm Ric}=\kappa h^2$. If $\kappa$ is constant (resp., zero), then $F$ or $h$ is said to be {\it Ricci-constant} (resp., {\it Ricci-flat}).

 A vector field $Y$ on an open subset $U\subset M$ is called a {\it geodesic field} if every integral curve of $Y$ in $U$ is a geodesic of $F$. For any $y\in T_xM\setminus\{0\}$, there are an open neighborhood $U_x$ and a non-vanishing $C^\infty$ geodesic field $Y$ on $U_x$ such that $Y$ is a geodesic extension of $y$. In local coordinates, a geodesic field $Y=Y^i\frac{\partial}{\partial x^i}$ is characterized by
 \beqn Y^j(x)\frac{\partial Y^i}{\partial x^j}(x)+2G^i(x, Y_x)=0,\label{Y-eq}\eeqn  here we identify $Y_x=Y^i(x)\frac{\partial}{\partial x^i}|_x$ with $(Y^1(x), \cdots, Y^n(x))$.
 The geodesic field $Y$ on $U$ induces a Riemannian metric $\hat g:=g_Y$ on $U$. Then $Y$ is also a geodesic field of $\hat g$. Moreover,  $G^i(Y)=\hat G^i(Y)$ and $R_y=\hat R_y$, where $\hat G^i$ and $\hat R_y$ are respectively the geodesic coefficients and the Riemann curvature of $\hat g$. Since the Ricci curvature Ric is the trace of the Riemann curvature $R_y$, the Ricci curvatures of $F$ and $\hat g$ coincide, i.e.,
 \beq {\rm Ric}(x, y)=\widehat{\rm Ric}(x, y) \label{Ric-Ric}\eeq for every $(x, y)\in TU_0$ and geodesic extension $Y$ of $y$  (see \S 6.2, \cite{Sh2}).

 Given a smooth measure $m$, write $dm=\sigma_F(x)dx$, where $\sigma_F$ is called the {\it density function} of $m$.  The {\it distortion} of $F$ is defined by
\beq \tau(x, y):=\log\left\{ \frac{\sqrt{det(g_{ij}(x,
y))}}{\sigma_F(x)}\right\}.\label{tau}\eeq  For any $y\in T_{x}M\backslash\{0\}$, let $\eta(t)$ be a geodesic with $\eta(0)=x$ and $\dot{\eta}(0)=y$.
The {\it S-curvature} $S$ is defined as the change rate of the distortion $\tau$ along $\eta$, i.e., $$S(x, y)=\frac d{dt}\tau(\eta(t), \dot\eta(t))|_{t=0}.$$ By a direct calculation, we have
\beq S(x, y)=\tau_{|i}(x, y)y^i=\frac{\partial G^i}{\partial y^i}-y^i\frac{\partial}{\partial x^i}\left(\log \sigma_F\right). \label{S-curv}\eeq
 S-curvature vanishes when $dm$ is the Riemannian measure of a Riemannian manifold $(M, g)$. If $dm=e^{-\psi}\sqrt{{\rm det}g_{ij}}dx$ is a weighted volume measure of $(M, g)$, where $\psi$ is a smooth function on $M$,  then $\tau=\psi$ and $S=d\psi(y)$.
$F$ is said to be of {\it isotropic S-curvature $\sigma$} if there exists a function $\sigma=\sigma(x)$ on $M$ such that $S(x, y)=(n+1)\sigma F(x, y)$ for any $(x, y)\in TM$. In particular, if $\sigma$ is constant, we say that $F$ is of {\it constant S-curvature $\sigma$} (\cite{ChS}, \cite{CS2}).

In Finsler geometry, there is a frequently used volume measure called the {\it Busemann-Hausdorff  measure} $m_{BH}$, whose volume form $dm_{BH}=\sigma_{BH}dx$, where
\beqn \sigma_{BH}=\frac{\mbox{Vol}(\mathbb B_1^n(x))}{\mbox{Vol}\left\{y=(y^i)\in \mathbb R^n|F(x, y)<1\right\}}, \eeqn where $\mathbb B^n_1$ is the unit ball in $\mathbb R^n$ and Vol means the Euclidean volume. This volume form can be expressed explicitly in some special cases, such as Randers metrics etc.(see \S \ref{sec4} below). When $F=g$ is Riemannian, $m_{BH}$ is reduced to the Riemannian measure of $g$, equivalently, $dm_{BH}=dV_g$. If $F$ is of  isotropic (resp., constant) S-curvature $\sigma$ with respect to the Busemann-Hausdorff measure $m_{BH}$, denoted by $S_{BH}=(n+1)\sigma F$, then we say $F$ has {\it isotropic} (resp., {\it constant}) S$_{BH}$-curvature $\sigma$.

Inspired from the weighted Ricci curvature on Riemannian manifolds, S. Ohta in \cite{Oh1} introduced the weighted Ricci curvature Ric$_N$ of $(M, F, m)$ in terms of Ricci curvature Ric  and S-curvature. More precisely, we have
\begin{defn}\label{def21}(\cite{Oh1})
Given a nonzero vector $y\in T_{x}M$, let $\eta:(-\varepsilon,\varepsilon)\rightarrow M$ be the geodesic with $\eta(0)=x$ and $\dot\eta(0)=y$. We set $dm =e^{-\psi(\eta(t))}dV_{\dot\eta}$ along $\eta$, where $dV_{\dot\eta}$ is the volume form of $g_{\dot\eta}$. Define the weighted  Ricci curvature involving a parameter $N\in(n,\infty)$ by
 $${\rm{Ric}}_N(x, y):={\rm{Ric}}(x, y)+{(\psi\circ \eta)^{''}(0)}-\frac{(\psi\circ \eta)^{\prime}(0)^{2}}{(N-n)},$$ where {\rm Ric} is the Ricci curvature of $F$. Further, define ${\rm{Ric}}_n: =\lim\limits_{N\rightarrow n}{\rm Ric}_N$ and ${\rm Ric}_\infty:=\lim\limits_{N\rightarrow \infty}{\rm Ric}_N$.
% For any $\lambda\geq 0$ and $N\in[n, \infty]$, define ${\rm{Ric}}_N(\lambda y):=\lambda^2{\rm{Ric}}_N(y)$.
\end{defn}

It is easy to see that $(\psi\circ \eta)^{'}(0)=S(x, y)$ and $(\psi\circ \eta)^{''}(0)$ is exactly the change rate of the $S$-curvature along the geodesic $\eta$, denoted by $\dot S$. Thus,
$${\rm Ric}_\infty=\textmd{Ric}+\dot S. $$
 For any $N\in [n, \infty]$, we say that  $Ric_N\geq K (K\in \mathbb R)$,  if ${\rm{Ric}}_N(x, y)\geq KF^2(x, y)$ for all $(x, y)\in TM$. Ohta proved in \cite{Oh1} that the bound $\textmd{Ric}_N\geq K$ is equivalent to Lott-Villani and Sturm's weak curvature-dimension condition which extends the weighted Ricci curvature bounded from below on (weighted) Riemannian manifolds (\cite{Oh2}, \cite{Li}).
Studies show that there are essential differences between the case when $N\in [n, \infty)$ and the case when $N=\infty$. Some important progress has been made  on global analysis and topology on Finsler measure spaces under the assumption that ${\rm{Ric}}_N\geq K$ for $N\in [n, \infty)$( \cite{Sh1}-\cite{Sh2}, \cite{Oh1}-\cite{Oh2}, \cite{Xia1}-\cite{Xia4} and references therein) with the help of some nonlinear approaches.
However, there are few results under some conditions on Ric$_\infty$. In this paper, we shall prove that
 ${\rm{Ric}}_\infty=K$ is equivalent to that $(M, F, m)$ is a gradient Ricci soliton with soliton constant $K$ on a Finsler manifold (see \S \ref{sec3}).

\subsection{Conformal vector fields} \label{subsec22}

Let $V$ be a vector field on an $n$-dimensional differential manifold $M$ and $\{\varphi_t\}$  the local one-parameter transformation group on $M$ generated by $V$.
%$V$ is said to be {\it complete} if it generates a global one-parameter group of transformations on $M$. Such a vector field always exists on a compact manifold.
Denote by $\hat \varphi_t$ a lift of $\varphi_t$ on $TM$, i.e., $\hat \varphi_t(x,y):=(\varphi_t(x), (\varphi_t)_*(y))$. Then
$\{\hat\varphi_t\}$ forms a one-parameter transformation group on $TM$ and induces a vector field
\beq \hat V=V^i\frac{\partial}{\partial x^i}+y^j\frac{\partial V^i}{\partial x^j}\frac{\partial
}{\partial y^i} \label{hat-V}\eeq on $TM$. $\hat V$ is said to be a {\it complete lift} of $V$ on $TM$. If $\hat\varphi_t^*(F)=e^{2\sigma_t}F$ for some function $\sigma_t(x)$ on $M$ with $\sigma_0(x)=0$, then $V$ is called a {\it conformal vector field} of $F$ with conformal factor $c$, equivalently, $\hat V(F)=2cF$, where $c=c(x)=\frac {d\sigma_t}{dt}|_{t=0}$.  In particular, $V$ is called a {\it homothetic vector field} with dilation $c$ if $c$ is a constant and a {\it Killing vector field} if $c=0$ (\cite{SX}).

 For any vector field $V$ on $M$, the {\it Lie derivative} $\mathcal L$ of a geometric object $\Xi(x, y)$ on $TM$ with respect to the complete lift $\hat V$ of $V$ on $TM$ is defined by
 \beq  ({\mathcal L}_{\hat V}\Xi)_{(x,y)}=\lim\limits_{t\rightarrow 0}\frac{\hat\varphi_t^*(\Xi)-\Xi}{t}=\frac {d}{dt}\Big |_{t=0}\hat\varphi_t^*(\Xi).\label{L-der}\eeq
It is easy to see that
 ${\mathcal L}_{\hat V}(F)=\hat V(F)$.  Using $C_{ijk}(y)y^i=0$ and (\ref{g-com}), we obtain
 \beq {\mathcal L}_{\hat V}(F^2(y))&=&\hat V(F^2(y))= \frac{\partial}{\partial x^i}\left( g_{kl}y^ky^l\right)V^i+\frac{\partial}{\partial y^j}\left( g_{il}y^iy^l\right)\frac{\partial V^j}{\partial x^k}y^k\nonumber \\ &=&\frac{\partial g_{kl}}{\partial x^i}y^ky^lV^i+2g_{jl}\frac {\partial V^j}{\partial x^k}y^k y^l\nonumber \\
&=&2g_{jl}\Gamma^j_{ik}y^ky^lV^i+2g_{jl}\frac {\partial V^j}{\partial x^k}y^k y^l
=2V_{l|k}y^ly^k, \label{LV-F2} \eeq where  $V_i=g_{ij}(y)V^j$.
  \begin{prop}\label{prop21} (\cite{SX}) Let $V$ be a vector field on $(M, F)$. Then the following statements are equivalent.

(1) $V$ is a conformal vector field with the conformal factor $c$.

(2)${\mathcal L}_{\hat V}(F)=2cF$.

(3) In local coordinates,  $V=V^i\frac{\partial}{\partial x^i}$ satisfies
 \beqn V_{i|j}+V_{j|i}+2C_{ij}^pV_{p|q}y^q=4cg_{ij}, \label{CVF}\eeqn  where $C_{ij}^p=g^{pq}C_{ijq}$, $V_i=g_{ij}V^j$ and $``|"$ is the horizontal covariant derivative with respect to the Chern connection of $F$. \end{prop}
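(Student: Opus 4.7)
The plan is to establish $(1)\Leftrightarrow(2)\Leftrightarrow(3)$. The equivalence $(1)\Leftrightarrow(2)$ is immediate from the definitions recalled just before the proposition: conformality means $\hat\varphi_t^*F=e^{2\sigma_t}F$ with $\tfrac{d\sigma_t}{dt}|_{t=0}=c$, which is the same as $\hat V(F)=2cF$, and the paper has also noted $\mathcal{L}_{\hat V}(F)=\hat V(F)$. So no additional work is required there. The substantive content is $(2)\Leftrightarrow(3)$, whose bridge is the identity $\mathcal{L}_{\hat V}(F^2)=2V_{l|k}y^ly^k$ already established in (\ref{LV-F2}). Since $\mathcal{L}_{\hat V}(F^2)=2F\,\mathcal{L}_{\hat V}(F)$, statement (2) is equivalent to the scalar identity $V_{l|k}(x,y)\,y^ly^k = 2c(x)\,g_{ij}(x,y)\,y^iy^j$ on $TM_0$.

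The direction from (3) to this scalar identity is immediate by contracting (3) with $\tfrac12 y^iy^j$: the symmetry of $y^iy^j$ collapses $V_{i|j}+V_{j|i}$ into $2V_{i|j}y^iy^j$, and the Cartan-correction term $C_{ij}^pV_{p|q}y^q\cdot y^iy^j$ drops out because the Cartan tensor annihilates $y$ in any of its slots. For the converse, my strategy is Finslerian polarization: apply $\tfrac12\partial^2/\partial y^i\partial y^j$ to $\mathcal{L}_{\hat V}(F^2)=4cF^2$. Using $g_{ij}=\tfrac12(F^2)_{y^iy^j}$, the right-hand side yields $4cg_{ij}$. For the left-hand side I would expand $\hat V(F^2)=\hat V(g_{ab})y^ay^b+2g_{ab}y^by^l\partial V^a/\partial x^l$ and differentiate twice in $y$; the crucial simplifications are $C_{ijk}y^i=0$ (so every Cartan term with a $y$-contracted slot drops out) and $\hat V(C_{abi})y^ay^b=0$ (obtained by $\hat V$-differentiating $C_{abi}y^ay^b=0$). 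After these cancellations the Hessian reduces to $\hat V(g_{ij})+g_{ik}\partial V^k/\partial x^j+g_{kj}\partial V^k/\partial x^i$, which is exactly the tensorial Lie derivative $(\mathcal{L}_{\hat V}g)_{ij}$ of the fundamental tensor.

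A separate direct computation using the metric compatibility (\ref{g-com}) of the Chern connection then rewrites $\hat V(g_{ij})$ as $V_{i|j}+V_{j|i}+2C_{ij}^pV_{p|q}y^q-g_{il}\partial V^l/\partial x^j-g_{jl}\partial V^l/\partial x^i$, and combining this with the two $\partial V/\partial x$ terms above gives $(\mathcal{L}_{\hat V}g)_{ij}=V_{i|j}+V_{j|i}+2C_{ij}^pV_{p|q}y^q$. Setting this equal to $4cg_{ij}$ produces (3). The main obstacle is the bookkeeping in this last step: one has to track how the $y$-dependencies of $g_{im}(y)$ (through the Cartan tensor) and of the Chern coefficients $\Gamma^m_{jk}(y)$ enter $V^m_{|j}=\partial V^m/\partial x^j+\Gamma^m_{kj}V^k$, and repeatedly invoke $C_{ijk}y^i=0$ together with the symmetry $\Gamma^m_{jk}=\Gamma^m_{kj}$ to make the non-covariant $\partial V/\partial x$ pieces cancel, so that only the clean tensorial expression with the Cartan-correction $2C_{ij}^pV_{p|q}y^q$ survives.
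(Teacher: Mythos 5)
Your proposal is correct, and since the paper states Proposition \ref{prop21} without proof (it is quoted from \cite{SX}), the only in-text material to compare against is the computation (\ref{LV-F2}), which your argument uses for the direction $(3)\Rightarrow(2)$ and then extends by polarization for the converse. The key step --- that $\tfrac 12\frac{\partial^2}{\partial y^i\partial y^j}\mathcal L_{\hat V}(F^2)$ equals $V_{i|j}+V_{j|i}+2C_{ij}^pV_{p|q}y^q$ --- checks out: the Cartan terms drop by $C_{ijk}y^k=0$, and since $N^l_{\ k}=\Gamma^l_{kq}y^q$ for the Chern connection the non-covariant $\partial V/\partial x$ pieces recombine with (\ref{g-com}) into $V^l_{\ |q}y^q$ exactly as you describe, so the argument is complete and consistent with the paper's own use of the result.
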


In particular, when $F=\alpha+\beta$ is a Randers metric, we have the following equivalent characterizations for conformal vector fields.
\begin{prop} \label{prop22}(\cite{SX})  Let $(M, F)$ be a Randers manifold with navigation data $(h, W)$. Then the following statements are equivalent.

(1)  $V$ is a conformal vector field of $F$  with conformal factor $c$.

(2) ${\mathcal L}_{\hat V}(\alpha)=2c\alpha, \ {\mathcal L}_{\hat V}(\beta)=2c\beta$.

(3) ${\mathcal L}_{\hat V}(h)=2ch, \ {\mathcal L}_{\hat V}(W_0)=2cW_0$.
\end{prop}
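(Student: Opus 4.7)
By Proposition \ref{prop21}, statement (1) is equivalent to the single equation $\mathcal{L}_{\hat V}(F) = 2cF$, so my plan is to prove (1)$\,\Leftrightarrow\,$(2) using a rationality argument specific to Randers metrics, and (2)$\,\Leftrightarrow\,$(3) using the algebraic dictionary between $(\alpha,\beta)$ and $(h,W)$ coming from the navigation formula.

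For (1)$\,\Leftrightarrow\,$(2), specializing (\ref{LV-F2}) to the Riemannian metric $\alpha$ shows that $\hat V(\alpha^2) = 2V_{l;k}y^ly^k$ is a quadratic form in $y$, while $\hat V(\beta) = (\mathcal{L}_V b)_l y^l$ is a 1-form in $y$. Rewriting $\hat V(F) = 2cF$ as $\hat V(\alpha^2) - 2c\alpha^2 = \alpha\bigl(2c\beta - \hat V(\beta)\bigr)$ exhibits the left side as a polynomial in $y$ and the right side as $\alpha$ times a 1-form. Since $\alpha^2$ is a positive-definite, hence irreducible, quadratic form over $\mathbb{R}[y^1,\dots,y^n]$, the standard divisibility argument forces both sides to vanish, yielding $\hat V(\alpha) = 2c\alpha$ and $\hat V(\beta) = 2c\beta$. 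The converse direction is immediate by summing.

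For (2)$\,\Leftrightarrow\,$(3), the navigation equation rearranges to $h^2 = \lambda F^2 + 2W_0 F$; substituting $F = \alpha+\beta$ and separating polynomial and $\alpha$-irrational pieces gives the dictionary $h^2 = \lambda(\alpha^2-\beta^2)$, $W_0 = -\lambda\beta$, and $\lambda = 1-\|\beta\|_\alpha^2 = 1-\|W\|_h^2$. The pivotal step is to establish $V(\lambda) = 0$ under each hypothesis. Assuming (2), the conformality $V_{i;j}+V_{j;i} = 4c a_{ij}$ of $\alpha$ gives $\mathcal{L}_V a^{ij} = -4c\,a^{ij}$, while $\hat V(\beta) = 2c\beta$ gives $\mathcal{L}_V b_i = 2c\,b_i$; combining these in the scalar $b^2 = a^{ij}b_ib_j$ produces the cancellation $V(b^2) = (\mathcal{L}_V a^{ij})b_ib_j + 2a^{ij}(\mathcal{L}_V b_i)b_j = -4cb^2 + 4cb^2 = 0$, hence $V(\lambda) = 0$. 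Applying $\hat V$ to the dictionary and using $\hat V(\alpha^2) = 4c\alpha^2$ and $\hat V(\beta) = 2c\beta$ then yields $\hat V(h^2) = 4c h^2$ and $\hat V(W_0) = 2c W_0$, i.e., (3). The reverse direction (3)$\,\Rightarrow\,$(2) is entirely symmetric: $V(\|W\|_h^2) = 0$ follows from $\mathcal{L}_V h^{ij} = -4c\,h^{ij}$ and $\mathcal{L}_V W_i = 2c\,W_i$, and then applying $\hat V$ to $\beta = -W_0/\lambda$ and $\alpha^2 = (\lambda h^2 + W_0^2)/\lambda^2$ recovers (2).

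The main obstacle I expect is precisely the synchronization of the conformal factor $c$ in both equalities of (2) (and of (3)). A priori, $\hat V(F) = 2cF$ alone only implies that $V$ conformally deforms $h$ with the shifted factor $c + V(\lambda)/(4\lambda)$ and scales $W_0$ with the factor $2c + V(\lambda)/\lambda$; in general these two factors would be mismatched. The vanishing $V(\lambda) = 0$ is what forces them to agree, and it in turn rests essentially on invoking \emph{both} equalities in (2) (or in (3)) with the \emph{same} factor $c$. Once this synchronization is in place, every remaining step reduces to routine algebraic substitution in the dictionary.
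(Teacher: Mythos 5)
Your proof is correct, modulo one slip in constants. Note first that the paper does not actually prove Proposition \ref{prop22}: it is quoted from \cite{SX}, so there is no in-text argument to compare against. That said, your two tools are precisely the ones this paper deploys elsewhere: the splitting of an identity $P+\alpha Q=0$ (with $P,Q$ polynomial in $y$) into $P=Q=0$ via the irreducibility of the positive definite form $\alpha^2$ (valid for $n\geq 2$) is the engine of the proof of Theorem \ref{thm41}, and the dictionary $h^2=\lambda(\alpha^2-\beta^2)$, $W_0=-\lambda\beta$, $b^2=\|W\|_h^2$, $\lambda=1-b^2$ is exactly what the paper records after (\ref{ab-hw}). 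Your key point --- that $V(\lambda)=0$, forced by using \emph{both} equalities of (2) (respectively of (3)) with the \emph{same} factor $c$, is what synchronizes the conformal factors between the $(\alpha,\beta)$ and $(h,W)$ pictures --- is the right observation, and the Leibniz computation $V(b^2)=(\mathcal L_V a^{ij})b_ib_j+2a^{ij}(\mathcal L_V b_i)b_j=-4cb^2+4cb^2=0$ is correct. The one genuine error is the displayed identity in your first step: from $\hat V(F)=2cF$ and $\hat V(\alpha)=\frac{1}{2\alpha}\hat V(\alpha^2)$ one gets $\hat V(\alpha^2)-4c\alpha^2=2\alpha\bigl(2c\beta-\hat V(\beta)\bigr)$, not $\hat V(\alpha^2)-2c\alpha^2=\alpha\bigl(2c\beta-\hat V(\beta)\bigr)$; taken literally, your version would deliver $\hat V(\alpha)=c\alpha$ rather than $2c\alpha$ after the divisibility argument. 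The structure of the proof is unaffected, but the factors need fixing for the stated conclusion in (2) to come out.
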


 A direct calculation gives
\beq {\mathcal L}_{\hat V}(\alpha^2)&=& 2V_{i;j}y^iy^j, \qquad
{\mathcal L}_{\hat V}(\beta)=(V^kb_{j;k}+b^kV_{k;j})y^j, \label{L-ab} \eeq where $V_i=a_{ij}V^j$ in this case.
Similarly,
\beq {\mathcal L}_{\hat V}(h^2)&= & 2V_{i:j}y^iy^j,\qquad
{\mathcal L}_{\hat V}(W_0)=(V^kW_{j:k}+W^kV_{k:j})y^j,\label{L-hW}\eeq where $V_i=h_{ij}V^j$ in this case. Thus (2) and (3) in Proposition \ref{prop22} are respectively rewritten as
\beq V_{i;j}+V_{j;i}=4ca_{ij}, \ \ \ V^jb_{i;j}+b^jV_{j;i}=2cb_i, \label{ab-i} \eeq
and
\beq \ \ \ \ \ V_{i:j}+V_{j:i}=4ch_{ij}, \ \ \  V^jW_{i:j}+W^jV_{j:i}=2cW_i.\label{hW-i}\eeq
Note that the meanings of $V_i$ in (\ref{LV-F2}), (\ref{ab-i}) and (\ref{hW-i}) are different. We abuse the notations without confusions.

\section{Almost Ricci solitons on Finsler manifolds} \label{sec3}

In this section, we shall prove Theorem \ref{thm11}.
Let us first recall he following result, which characterizes Finsler Ricci solitons as a fixed point to the unnormalized Ricci flow (\ref{UNRF}).
\begin{prop} (\cite{BY}) \label{prop31} Let $(M, F_0)$ be a compact Finsler manifold. Then there exists a function $\varrho=\varrho(t)$ of the time $t$ and a family of diffeomorphisms $\psi_t$ on $M$ such that $\hat F_t^2:=\varrho(t)\hat \psi_t^*(F_0^2)$ is a solution of (\ref{UNRF}) if and only if there is a vector field $V$ on $M$ such that $(F_0, V)$ satisfies (\ref{RS-kappa}) with soliton constant $\kappa$, where $\hat\psi_t$ is a global lift of $\psi_t$ on $TM$ and $\hat V$ is a complete lift of $V$. \end{prop}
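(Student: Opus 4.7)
The strategy rests on two elementary invariance properties of the Finsler Ricci tensor. First, the geodesic spray coefficients $G^i$ are unchanged when $F$ is multiplied by a positive constant, so the Riemann curvature $R^i{}_k$ and hence $\mathrm{Ric}$ are invariant under $F\mapsto cF$. Second, $\mathrm{Ric}$ is natural under pullback by the complete lift of any diffeomorphism. Combining these, for any self-similar candidate $\hat F_t^2=\varrho(t)\hat\psi_t^*(F_0^2)$ one has
\begin{equation*}
\mathrm{Ric}(\hat F_t)=\hat\psi_t^*\mathrm{Ric}(F_0),
\end{equation*}
and the Ricci-flow equation (\ref{UNRF}) is equivalent to $\partial_t(\hat F_t^2)=-2\,\mathrm{Ric}(\hat F_t)$.

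For the forward direction, I will assume $\hat F_t^2=\varrho(t)\hat\psi_t^*(F_0^2)$ solves (\ref{UNRF}), normalize $\varrho(0)=1$, $\psi_0=\mathrm{id}$, and set $V:=\partial_t\psi_t|_{t=0}$ so that $\hat V$ is the complete lift of $V$. Differentiating the ansatz at $t=0$ yields
\begin{equation*}
\partial_t(\hat F_t^2)\big|_{t=0}=\varrho'(0)F_0^2+\mathcal L_{\hat V}(F_0^2),
\end{equation*}
while the flow equation gives $\partial_t(\hat F_t^2)|_{t=0}=-2\,\mathrm{Ric}(F_0)$. Equating produces (\ref{RS-kappa}) with the constant $\kappa:=-\varrho'(0)/2$.

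For the converse, given $(F_0,V)$ satisfying $2\,\mathrm{Ric}+\mathcal L_{\hat V}(F_0^2)=2\kappa F_0^2$, I will set $\varrho(t):=1-2\kappa t$ on the maximal interval where $\varrho>0$, and take $\psi_t$ to be the flow of the time-dependent vector field $X_t:=V/\varrho(t)$; compactness of $M$ guarantees that this flow is defined on all of $M$. Using $\partial_t(\hat\psi_t^*T)=\hat\psi_t^*(\mathcal L_{\hat X_t}T)$, I compute
\begin{equation*}
\partial_t(\hat F_t^2)=\varrho'(t)\hat\psi_t^*(F_0^2)+\varrho(t)\hat\psi_t^*\!\left(\mathcal L_{\hat X_t}F_0^2\right)=-2\kappa\,\hat\psi_t^*(F_0^2)+\hat\psi_t^*\!\left(\mathcal L_{\hat V}(F_0^2)\right).
\end{equation*}
Plugging in the soliton equation collapses the right-hand side to $-2\hat\psi_t^*\mathrm{Ric}(F_0)=-2\,\mathrm{Ric}(\hat F_t)$ by the invariance identity above, confirming (\ref{UNRF}).

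The principal technical obstacle is the careful handling of the lift to $TM$: one must verify that the complete lift $\hat\psi_t$ of the flow of $X_t$ coincides with the flow of $\hat X_t$ on $TM_0$, and that the invariance $\mathrm{Ric}(\hat F_t)=\hat\psi_t^*\mathrm{Ric}(F_0)$ genuinely holds as an equality of functions on $TM_0$ (not merely up to fiberwise rescaling), so that the $\varrho(t)$-factor drops out cleanly. Once these checks are made, the computation is essentially formal and the equivalence follows; forward and backward arguments are mutual inverses, with the vector field $V$ and the affine rescaling $\varrho(t)=1-2\kappa t$ paired via $\kappa=-\varrho'(0)/2$.
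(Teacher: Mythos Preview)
The paper does not supply its own proof of Proposition~\ref{prop31}; it is quoted from \cite{BY} and simply recalled as background. There is therefore no in-paper argument to compare your attempt against.

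That said, your proof is correct and is exactly the standard argument for this kind of statement (the direct Finsler analogue of the familiar Riemannian computation). The two invariance facts you isolate---scale invariance of $\mathrm{Ric}$ (because the spray coefficients $G^i$ are unchanged under $F\mapsto cF$) and naturality of $\mathrm{Ric}$ under lifted diffeomorphisms---are precisely what make the computation go through on $TM_0$. The choice $\varrho(t)=1-2\kappa t$ together with $X_t=V/\varrho(t)$ is the canonical reparametrization, and your use of $\partial_t(\hat\psi_t^*T)=\hat\psi_t^*(\mathcal L_{\hat X_t}T)$ is the right tool. The normalization $\varrho(0)=1$, $\psi_0=\mathrm{id}$ in the forward direction is harmless since the initial condition $\hat F_0=F_0$ in (\ref{UNRF}) forces $\varrho(0)\hat\psi_0^*(F_0^2)=F_0^2$, and one may redefine $\psi_t\mapsto\psi_t\circ\psi_0^{-1}$, $\varrho(t)\mapsto\varrho(t)/\varrho(0)$ without affecting self-similarity. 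Your closing remarks about verifying that the complete lift of the flow of $X_t$ coincides with the flow of $\hat X_t$ are apt; this is routine but should indeed be checked.
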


 Let $Y$ be a non-vanishing $C^\infty$-geodesic field on an open subset $U\subset M$ and $dV_{g_Y}$ be the volume form of the Riemannian metric $g_Y$. Given a measure $m$ on $(M, F)$, the volume form  determined by the measure $m$ can be written as $dm=e^{-\psi}dV_{g_Y}$ on $U$, where
$\psi$ is given by
\beq \psi(x)=\log \frac{\sqrt{{\rm{det}}{(g_{ij}(x, Y_x))}}}{\sigma_F(x)}=\tau(x, Y_x),\label{psi} \eeq which is just the distortion along $Y_x$ at $x\in U$ (see (\ref{tau})). Obviously, $\psi\in C^\infty(U)$. We call $\psi$ the {\it weight function} of $m$ on $U$ with respect to $g_Y$. By definition,  the S-curvature of $(M, m)$ is given by
\beq S(x, Y_x)=Y_x\left[\tau(\cdot, Y)\right]=d\psi_x(Y_x).\label{S-Y}\eeq Thus,
\beq \dot S(x, Y_x)=Y_x\left[S(\cdot, Y)\right]=Y_x[Y(\psi)]={\rm Hess}(\psi)(Y_x)=\widehat{\rm Hess}(\psi)(Y_x),\label{dot-S-Y}\eeq where $\widehat{\rm Hess}(\psi)$ is the Hessian of $\psi$ with respect to $\hat g=g_Y$.

Let $V$ be a smooth vector field on $U$ with the complete lift $\hat V$ on $TU$. In local coordinates, we write $V=V^i\frac{\partial}{\partial x^i}$ and $Y=Y^i\frac{\partial}{\partial x^i}$. Note that $F^2(Y)=\hat g^2(Y)$ and
\beqn \frac{\partial \hat g_{ij}}{\partial x^k}=\frac{\partial g_{ij}}{\partial x^k}(Y)+2C_{ijl}(Y)\frac{\partial Y^l}{\partial x^k}.\eeqn
In the same way as (\ref{LV-F2}) and using the above equation, we have
\beqn \mathcal L_{\hat V}(F^2(Y))=\mathcal L_{\hat V}(\hat g^2(Y))=2V_{j||k}Y^jY^k, \eeqn where $V_i=g_{ij}(Y)V^j$ and ``$||$" means the covariant derivative with respect to the Levi-Civita connection of $\hat g$. In particular, when $V$ is the gradient field of the function $\psi$ with respect to $\hat g$, i.e., $V=$grad$_{\hat g}\psi$, we have
\beqn \mathcal L_{\hat V}(F^2(x, Y_x))=\mathcal L_{\hat V}(F^2(Y))|_x=2\widehat{\rm Hess}(\psi)(Y_x). \label{LH}\eeqn  From this and (\ref{dot-S-Y}), we have proved that
\begin{lem} \label{lem31} Let $(M, F, m)$ be an $n$-dimensional Finsler measure space and $Y$ be a geodesic field on an open subset $U\subset M$. Assume that $V$ is the gradient for the weight function $\psi$ of $m$ on $U$ with respect to $g_Y$, i.e., $V={\rm grad}_{g_Y}\psi$. Then $\mathcal L_{\hat V}(F^2(x, Y_x))=2\dot S(x, Y_x)$ for any $x\in U$.
\end{lem}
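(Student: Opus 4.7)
The plan is to compute $\mathcal{L}_{\hat V}(F^2(x,Y_x))$ by reducing it to a purely Riemannian calculation on $U$ with respect to $\hat g = g_Y$, and then to match the outcome with $\dot S(x,Y_x)$ via formula (\ref{dot-S-Y}). Since the Lie derivative $\mathcal{L}_{\hat V}$ acts on functions on $TM$ and $Y$ is a section of $TU$, the first thing to check is that the evaluation at $y=Y_x$ commutes with the derivative in the sense one expects; this is where the Cartan tensor enters.

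First, I would expand $\mathcal{L}_{\hat V}(F^2)$ in local coordinates as in (\ref{LV-F2}) and set $y=Y_x$, obtaining
\[
\mathcal{L}_{\hat V}(F^2)\big|_{(x,Y_x)} \;=\; \frac{\partial g_{kl}}{\partial x^i}(x,Y_x)\,Y^k Y^l V^i \;+\; 2\,g_{jl}(x,Y_x)\,\frac{\partial V^j}{\partial x^k}\,Y^k Y^l .
\]
Comparing with $\partial_k \hat g_{ij}(x) = \partial_k g_{ij}(x,Y_x) + 2C_{ijl}(x,Y_x)\,\partial_k Y^l$, the Cartan correction, when contracted with $Y^i Y^j$, drops out by the homogeneity identity $C_{ijl}y^i = 0$. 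Hence the same expression can be written purely in terms of $\hat g$, and the standard Riemannian computation (which is exactly the calculation (\ref{LV-F2}), performed now for $\hat g$ rather than $F$) yields
\[
\mathcal{L}_{\hat V}(F^2(Y))\big|_x \;=\; 2\,V_{j||k}\,Y^j Y^k ,
\]
where $V_i = \hat g_{ij}V^j$ and $||$ denotes the Levi-Civita connection of $\hat g$.

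Second, under the hypothesis $V = \mathrm{grad}_{\hat g}\psi$, the quantity $V_{j||k}Y^j Y^k$ is by definition $\widehat{\mathrm{Hess}}(\psi)(Y_x,Y_x)$. Combining this with (\ref{dot-S-Y}), which identifies $\dot S(x,Y_x)$ with $\widehat{\mathrm{Hess}}(\psi)(Y_x)$, gives the claimed equality. The only substantive obstacle I foresee is step one: I need to verify carefully that every Cartan-tensor contribution arising when passing from $g_{ij}(Y)$ to $\hat g_{ij}$ is killed by $C_{ijk}y^i=0$; once this bookkeeping is in place the statement follows by straight identification.
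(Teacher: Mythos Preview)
Your proposal is correct and follows essentially the same route as the paper's proof: both arguments pass from $g_{ij}(Y)$ to $\hat g_{ij}$ via the Cartan-tensor relation $\partial_k\hat g_{ij}=\partial_k g_{ij}(Y)+2C_{ijl}(Y)\partial_kY^l$, observe that the Cartan contribution vanishes upon contraction with $Y^iY^j$ by $C_{ijl}y^i=0$, reduce to $\mathcal L_{\hat V}(F^2(Y))=2V_{j\|k}Y^jY^k$ with respect to the Levi-Civita connection of $\hat g$, and then identify the result with $2\widehat{\rm Hess}(\psi)(Y_x)=2\dot S(x,Y_x)$ via (\ref{dot-S-Y}) under the hypothesis $V={\rm grad}_{\hat g}\psi$.
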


Next we prove Theorem \ref{thm11}, which actually gives a geometric explanation of gradient almost Ricci solitons.

\begin{proof}[Proof of Theorem~{\upshape\ref{thm11}}] For any $y\in T_xM\setminus\{0\}$, there is a neighborhood $U_x$ of $x$ in which $Y$ is a geodesic extension of $y$. Write $dm=e^{-\psi}dV_{g_Y}$ for some function $\psi$ on $U_x$ as in the introduction.

 Assume that Ric$_\infty=\kappa$, i.e., Ric+$\dot S=\kappa F^2$. Let $$\widetilde V:={\rm grad}_{g_{Y}}\psi,$$ which is a smooth vector field on $U$. Choose smaller neighborhoods $W_1$ and $W_2$ of $x$ such that the closure $\overline W_1$ is compact and $W_1\subset \overline W_1\subset W_2\subset \overline W_2\subset U$, and a function $f\in C^\infty(M)$ such that $f\equiv 1$ on $\overline W_1$ and $f\equiv 0$ outside $W_2$. We define a vector field $V$ on $M$
\beqn V(p)=\left\{\begin{array}{ll} f(p)\widetilde V(p), & p\in U \\ 0, & p\notin U. \end{array}\right.\eeqn Then $V$ is a smooth vector field  on $M$ with $V|_{W_1}=\widetilde V$. Consequently, $V_x=\widetilde V_x=({\rm grad}_{g_Y}\psi)(x)$. On the other hand, by the assumption, we have  \beqn {\rm Ric}(p, Y_p)+\dot S(p, Y_p)=\kappa
 F^2(p, Y_p)\label{RSF}\eeqn on $W_1$. From this, $ V|_{W_1}=\widetilde V$ and Lemma \ref{lem31}, we get $2{\rm Ric}(p, Y_p)+\mathcal L_{\hat{V}}(F^2(p, Y_p))=2\kappa F^2(p, Y_p)$ for any $p\in W_1$.  In particular, it holds
  \beq 2{\rm Ric}(x, Y_x)+\mathcal L_{\hat{V}}(F^2(x, Y_x))=2\kappa F^2(x, Y_x).\label{RSF-x}\eeq Note that $y=Y_x$. Then $(F, V)$ satisfies (\ref{RS-kappa}) at $(x, y)$. Hence $(M, F, m)$ is a gradient almost Ricci soliton.

Conversely, if $(M, F, m)$ is a gradient almost Ricci soliton, then there is a vector field $V$ on $M$ such that $V_x=({\rm grad}_{g_Y}\psi)(x)$  and $(F, V)$ satisfies (\ref{RS-kappa}) at $(x, y)\in TM$, which means that (\ref{RSF-x}) holds. Choose $W_1$, $W_2$ and $f$ as above and define
% smaller neighborhoods $W_1$ and $W_2$ of $x$ such that $\overline W_1$ is compact and $W_1\subset \overline W_1\subset W_2\subset \overline W_2\subset U_x$,  and a function $f\in C^\infty(M)$ such that $f\equiv 1$ on $\overline W_1$ and $f\equiv 0$ outside $W_2$ as above. Define
$$\widetilde V={\rm grad}_{g_{Y}}(f\psi),$$ namely, $\tilde V$ is the gradient of the weight function $f\psi=\psi$ on $W_1$ with respect to $g_Y$. Consequently,
$\mathcal L_{\hat{\tilde V}}(F^2(x, Y_x))=2\dot S(x, Y_x)$ by Lemma \ref{lem31}. Plugging this into (\ref{RSF-x}) with $\hat{\tilde V}$ instead of $\hat V$ and $Y_x=y$ yield Ric$_\infty(x, y)=\kappa F^2(x, y)$. By the arbitrariness of $(x, y)\in TM$, we have Ric$_\infty=\kappa$.  \end{proof}

\section{Almost Ricci solitons on Randers manifolds} \label{sec4}

Let $F=\alpha+\beta$ be a Randers metric, where $\alpha=\sqrt{a_{ij}(x)y^iy^j}$ and $\beta=b_i(x)y^i$ with $b:=\|\beta\|_\alpha<1$ are respectively the Riemannian metric and 1-form on $M$. It is known that the volume form determined by the Busemann-Hausdorff measure $m_{BH}$ on $M$ is given by $dm_{BH}=\sigma_{BH}dx$ (\S 5.2, \cite{ChS}), where
$$\sigma_{BH}=e^{(n+1)\log\sqrt{1-b^2}}\sigma_\alpha, \ \ \ \sigma_\alpha=\sqrt{\det(a_{ij})}.$$
\begin{lem} \label{lem-41}(\cite{ChS}, \cite{CS2}) Let $(M ,F, m_{BH})$ be an $n$-dimensional Randers manifold equipped with the Busemann-Hausdorff measure $m_{BH}$. For any scalar function $\sigma$ on $M$, the following statements are equivalent.

(1) $F$ is of isotropic S$_{BH}$-curvature $\sigma$, i.e., $S_{BH}=(n+1)\sigma F$.

(2) $e_{00}=2\sigma(\alpha^2-\beta^2)$.
\end{lem}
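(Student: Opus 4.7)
The plan is to derive an explicit rational form for $S_{BH}$ in $y$ and then compare with $(n+1)\sigma F$ by separating $\alpha$-rational and $\alpha$-irrational pieces. First, I would invoke the Randers decomposition of the geodesic spray, $G^i = {}^\alpha G^i + P\, y^i + Q^i$, with $P = e_{00}/(2F) - s_0$ and $Q^i = \alpha\, s^i{}_0$. Euler's relation on the $1$-homogeneous $P$ gives $\partial(P\,y^i)/\partial y^i = (n+1)P$, while antisymmetry of $s_{ij}$ (so that $s^i{}_i = 0$ and $s_{ij}y^iy^j = 0$) gives $\partial Q^i/\partial y^i = 0$. Combining these with $\partial\, {}^\alpha G^i/\partial y^i = y^k\partial_k\log\sigma_\alpha$ and differentiating $\log\sigma_{BH} = \tfrac{n+1}{2}\log(1-b^2) + \log\sigma_\alpha$ (using $\partial_k(b^2) = 2(r_k+s_k)$, which comes from $b^j b_{j;k} = r_k + s_k$ and $b^j s_j = 0$), the definition (\ref{S-curv}) collapses to
\[
S_{BH} \;=\; (n+1)\left[\frac{e_{00}}{2F} \;-\; s_0 \;+\; \frac{r_0+s_0}{1-b^2}\right].
\]

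For the direction (2)$\Rightarrow$(1), I would rewrite $e_{00} = 2\sigma(\alpha^2-\beta^2)$ via $e_{ij} = r_{ij} + b_is_j + b_js_i$ as the tensor identity $r_{ij} = 2\sigma(a_{ij}-b_ib_j) - (b_is_j + b_js_i)$, contract with $b^j$ and use $b^j s_j = 0$ to deduce $r_i = 2\sigma(1-b^2)b_i - b^2 s_i$, whence $(r_0+s_0)/(1-b^2) = 2\sigma\beta + s_0$. Substituting this together with $e_{00}/(2F) = \sigma(\alpha-\beta)$ into the displayed formula collapses $S_{BH}$ to $(n+1)\sigma F$. For (1)$\Rightarrow$(2), I would set $S_{BH} = (n+1)\sigma F$ and clear denominators by multiplying through with $2F(1-b^2)$, which leaves a relation of the form $A(y) + \alpha\, B(y) = 0$ with $A, B$ polynomial in $y$. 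Since $\alpha$ is not a polynomial in $y$, both $A$ and $B$ must vanish separately: $B=0$ reproduces $(r_0+s_0)/(1-b^2) = 2\sigma\beta + s_0$, and $A=0$ then forces $e_{00} = 2\sigma(\alpha^2-\beta^2)$.

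The main obstacle is the clean rational/irrational separation in the converse direction: once the denominator $F$ is cleared, one must justify that the resulting polynomial-plus-$\alpha$-multiple identity truly decouples into two independent equations, and then check that these two equations collapse consistently to the single scalar identity carrying the same $\sigma$ as the assumed S-curvature. The remaining steps, namely the Randers geodesic-spray decomposition and the vanishing of $\partial_i Q^i$, are standard computations that should go through without surprises.
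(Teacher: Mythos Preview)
Your argument is correct, and in fact the paper gives no proof of this lemma at all: it is quoted from \cite{ChS} and \cite{CS2}, and the route you outline is exactly the standard one in those references. The explicit formula
\[
S_{BH}=(n+1)\Big[\frac{e_{00}}{2F}-s_0+\frac{r_0+s_0}{1-b^2}\Big]
\]
is derived precisely as you indicate, and both implications follow from it.

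The ``obstacle'' you flag is not a real difficulty. After clearing $2F(1-b^2)$ you get an identity $A+\alpha B=0$ with $A,B$ polynomial in $y$; since $\alpha$ is irrational in $y$ you get $A=0$ and $B=0$. The vanishing of $B$ gives $(r_0+s_0)/(1-b^2)=2\sigma\beta+s_0$, and substituting this into $A=0$ yields $(1-b^2)\big(e_{00}-2\sigma(\alpha^2-\beta^2)\big)=0$, hence $e_{00}=2\sigma(\alpha^2-\beta^2)$ with the \emph{same} $\sigma$ as in the hypothesis. There is no consistency check to perform: $\sigma$ is given from the outset in (1), and the two polynomial equations are not overdetermined but rather one feeds into the other. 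So your plan goes through without surprises.
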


Next we shall give an equivalent characterization of almost Ricci solitons for Randers metrics. For this, we need some lemmas. The following lemma is obvious.
\begin{lem}\label{lem41} Let $F=\alpha+\beta$ be a Randers metric on an $n$-dimensional manifold $M$ and $V$ be a vector field on $M$ with complete lift $\hat V$ on $TM$. Then
\beq {\mathcal L}_{\hat V}(F^2)=\frac{F}{\alpha}{\mathcal L}_{\hat V}(\alpha^2)+2F{\mathcal L}_{\hat V}(\beta).\label{LV-F2*}\eeq
\end{lem}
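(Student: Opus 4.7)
The plan is to exploit the fact that $\mathcal{L}_{\hat V}$ acts as a derivation on the algebra of smooth functions on $TM_0$ (since for any function $f$ on $TM$, $\mathcal{L}_{\hat V}f = \hat V(f)$) and to expand $F^2 = \alpha^2 + 2\alpha\beta + \beta^2$.

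First I would apply the Leibniz rule termwise to obtain
\begin{equation*}
\mathcal{L}_{\hat V}(F^2) = \mathcal{L}_{\hat V}(\alpha^2) + 2\mathcal{L}_{\hat V}(\alpha\beta) + \mathcal{L}_{\hat V}(\beta^2) = \mathcal{L}_{\hat V}(\alpha^2) + 2\alpha\,\mathcal{L}_{\hat V}(\beta) + 2\beta\,\mathcal{L}_{\hat V}(\alpha) + 2\beta\,\mathcal{L}_{\hat V}(\beta).
\end{equation*}
The only quantity on the right-hand side not of the desired form is $\mathcal{L}_{\hat V}(\alpha)$. To eliminate it, I would use $\mathcal{L}_{\hat V}(\alpha^2) = 2\alpha\,\mathcal{L}_{\hat V}(\alpha)$, valid on $TM_0$ where $\alpha>0$, to rewrite $\mathcal{L}_{\hat V}(\alpha) = \tfrac{1}{2\alpha}\mathcal{L}_{\hat V}(\alpha^2)$, and substitute. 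Collecting the coefficients of $\mathcal{L}_{\hat V}(\alpha^2)$ and $\mathcal{L}_{\hat V}(\beta)$ gives
\begin{equation*}
\mathcal{L}_{\hat V}(F^2) = \left(1 + \frac{\beta}{\alpha}\right)\mathcal{L}_{\hat V}(\alpha^2) + 2(\alpha+\beta)\mathcal{L}_{\hat V}(\beta) = \frac{F}{\alpha}\mathcal{L}_{\hat V}(\alpha^2) + 2F\,\mathcal{L}_{\hat V}(\beta),
\end{equation*}
which is (\ref{LV-F2*}). The identity is established on $TM_0$ and extends trivially to the zero section since both sides vanish at $y=0$ by homogeneity.

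There is no real obstacle here: this is a one-line derivation once the Leibniz rule is applied and $\mathcal{L}_{\hat V}(\alpha)$ is expressed via $\mathcal{L}_{\hat V}(\alpha^2)$. The only point warranting mention is that the manipulation takes place on $TM_0$ so that division by $\alpha$ is legitimate, but this is standard in Randers calculations.
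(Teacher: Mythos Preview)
Your proof is correct and is exactly the obvious Leibniz-rule expansion that the paper has in mind; the paper itself offers no proof, stating only that the lemma ``is obvious.''
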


\begin{lem} \label{lem42}(\cite{BR}, \cite{CS2}) Let $F=\alpha+\beta$ be a Randers metric on an $n$-dimensional manifold $M$. Then Ricci curvature of $F$ is given by
\beq {\rm{Ric}}={}^{\alpha}{\rm Ric}+2\alpha s^i_{\ 0;i}-2t_{00}-\alpha^2 t^i_{\ i}+(n-1)\Xi,\label{Ric-F} \eeq where ${}^{\alpha}{\rm{Ric}}$ denotes the Ricci curvature of $\alpha$ and  $\Xi$ is defined by
\beq \Xi:=\frac {2\alpha}F\left(q_{00}-\alpha t_0\right)+\frac 3{4F^2}(r_{00}-2\alpha s_0)^2-\frac 1{2F}(r_{00;0}-2\alpha s_{0;0}),\label{Xi}\eeq where $q_{00}=q_{ij}y^iy^j$, here $q_{ij}:= r_{ik}s^k_{\ j}$.
\end{lem}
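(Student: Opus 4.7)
The plan is to derive the formula by tracing the Riemann curvature of $F$, starting from the classical decomposition of the geodesic spray of a Randers metric relative to that of $\alpha$. First I would invoke the known identity that the spray coefficients of $F=\alpha+\beta$ decompose as
$$G^i = {}^\alpha G^i + Py^i + Q^i,$$
where $P$ and $Q^i$ are built from the covariant derivatives of $\beta$ with respect to the Levi-Civita connection of $\alpha$; the standard form is
$$P = \frac{e_{00}}{2F}-s_0, \qquad Q^i = \alpha s^i_{\ 0}.$$
This identity follows by unpacking the definition of $G^i$ using $F^2=\alpha^2+2\alpha\beta+\beta^2$ and the conventions for raising/lowering indices via $a_{ij}$.

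Second, I would substitute this decomposition into the Riemann curvature formula (\ref{Rik1}) to obtain
$${R^i}_{k} = {}^\alpha {R^i}_{k} + \Delta^i_{\ k},$$
where $\Delta^i_{\ k}$ collects all cross and quadratic contributions of $Py^i+Q^i$ against itself and against ${}^\alpha G^j$. The pure Riemannian part yields ${}^{\alpha}{\rm Ric}$ after contracting $i=k$. The remaining contributions split naturally into three groups: (i) horizontal derivatives of $Q^i=\alpha s^i_{\ 0}$ which, after tracing $i=k$ and using that ${}^\alpha G^j$ is torsion-free, reorganize into $2\alpha s^i_{\ 0;i}$; (ii) algebraic products among the $s$-type data that collapse to $-2t_{00}-\alpha^2 t^i_{\ i}$ via $t_{ij}=s_{ik}s^k_{\ j}$ and $t^i_{\ i}=a^{ij}t_{ij}$; and (iii) terms coming from vertical derivatives of the radial piece $Py^i$, which, after contraction, produce a factor of $n-1$ and precisely fit into the expression $\Xi$ of (\ref{Xi}).

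The main obstacle is the bookkeeping required to recognize group (iii) as $(n-1)\Xi$. The technical facts I would use are $y^i\partial_{y^i}\alpha=\alpha$, $\partial_{y^k}\alpha=y_k/\alpha$ with $y_k=a_{kj}y^j$, and $\partial_{y^k}F=y_k/\alpha+b_k$, together with the trace identity $\partial_{y^i}(Py^i)=y^i\partial_{y^i}P+nP$, which is what generates the characteristic $n-1$ factor after subtracting the purely radial contribution. Organizing the output by powers of $1/F$ then isolates the three summands of $\Xi$: the $\alpha/F$-term $2(q_{00}-\alpha t_0)$ coming from the mixed product $P\cdot s^i_{\ 0}$-type contributions (using $q_{ij}=r_{ik}s^k_{\ j}$), the $1/F^2$-term $\tfrac{3}{4}(r_{00}-2\alpha s_0)^2$ coming from $P^2$ after differentiation in $y$, and the $1/F$-term $-\tfrac{1}{2}(r_{00;0}-2\alpha s_{0;0})$ coming from the $x$-derivatives of $P$. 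The algebra is unavoidable but systematic once the three-fold organization above is fixed.
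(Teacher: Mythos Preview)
The paper does not supply its own proof of this lemma; it is quoted verbatim from \cite{BR} and \cite{CS2} and used as a black box. Your outline is the standard derivation carried out in those references: one writes $G^i={}^\alpha G^i+Py^i+Q^i$ with $P=\dfrac{e_{00}}{2F}-s_0=\dfrac{r_{00}-2\alpha s_0}{2F}$ and $Q^i=\alpha s^i_{\ 0}$, feeds this into the curvature formula (\ref{Rik1}), and traces. Your three-fold grouping (horizontal derivatives of $Q^i$; quadratic $s$-terms; the $Py^i$ block producing the factor $n-1$ via $\partial_{y^i}y^i=n$) is exactly how the computation in \cite{CS2} is organized, so there is nothing to compare---your proposal is correct and coincides with the approach of the cited sources.
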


Note that $(b^2)_{;j}=2(r_j+s_j)$, $s_ib^i=s^ib_i=0$ and $b^i_{\ ;k}=r^i_{\ k}+s^i_{\ k}$. It is straightforward to check the following equalities.
\begin{lem} \label{lem43} Let $F=\alpha+\beta$ be a Randers metric on an $n$-dimensional manifold $M$. Suppose that $e_{00}=2\sigma(x)(\alpha^2-\beta^2)$. Then
\beqn & & r_{ij}= - s_ib_j-s_jb_i+2\sigma(a_{ij}-b_ib_j), \ \ r^i_{\ j}=-s^ib_j-b^is_j+2\sigma(\delta^i_j-b^ib_j), \ \ r^j_{\ j}=2\sigma(n-b^2),\nonumber \\
& & r_j=-b^2s_j+2\sigma(1-b^2)b_j, \ \  r=2\sigma b^2(1-b^2), \ \  r^i_{\ 0}=-\beta s^i-b^is_0+2\sigma(y^i-\beta b^i), \nonumber \\
& & r_{00}=-2\beta s_0+2\sigma(\alpha^2-\beta^2), \ \ r^i_{\ i;0}=2\sigma_0(n-b^2)-4\sigma(1-b^2)(2\sigma\beta+s_0), \nonumber \\ & & r^i_{\ ;i}=-2(1-b^2)\left(s_is^i-\sigma_ib^i-2n\sigma^2+6\sigma^2b^2\right)-b^2s^i_{\ ;i}, \ \ q_{00}=-(s_0^2+t_0\beta+2\sigma\beta s_0),\nonumber \\ & & r_{00;0}=-2s_{0;0}\beta +4s_0^2\beta +8\sigma s_0\beta^2+2(\sigma_0-2\sigma s_0-4\sigma^2\beta)(\alpha^2-\beta^2),  \eeqn where $\sigma_0:=\sigma_{i}y^i$, $\sigma_i: =\sigma_{x^i}$.
\end{lem}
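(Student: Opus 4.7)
The plan is to derive every identity from the symmetric polarization of the hypothesis. Both $e_{00}=e_{ij}y^iy^j$ and $\alpha^2-\beta^2=(a_{ij}-b_ib_j)y^iy^j$ are quadratic forms in $y$, so the assumption polarizes to $e_{ij}=2\sigma(a_{ij}-b_ib_j)$. Unpacking $e_{ij}=r_{ij}+b_is_j+b_js_i$ then yields the first identity $r_{ij}=-s_ib_j-s_jb_i+2\sigma(a_{ij}-b_ib_j)$.

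From this formula the next block---$r^i_{\ j}$, $r^j_{\ j}$, $r_j$, $r$, $r^i_{\ 0}$, $r_{00}$, and $q_{00}$---follows by pure algebra. Concretely, I would raise the index $i$ with $a^{ik}$ to obtain $r^i_{\ j}$, then contract successively with $\delta^j_i$, $b^j$, a further $b^i$, $y^j$, and $y^iy^j$ to produce $r^j_{\ j}$, $r_j$, $r$, $r^i_{\ 0}$, and $r_{00}$, using only $b_ib^i=b^2$ and $s_ib^i=s^ib_i=0$ (the orthogonality following from the skew-symmetry of $s_{ij}$ combined with $s_j=b^is_{ij}$). For $q_{00}=r_{ik}s^k_{\ j}y^iy^j$, substitute the formula for $r_{ik}$: the $a_{ik}$-term becomes $2\sigma s_{ij}y^iy^j$ and vanishes by skew-symmetry; the $b_is_k$-term collapses to $-\beta(s_ks^k_{\ j}y^j)=-\beta t_0$ after writing $s_k=b^ls_{lk}$ and using $t_{lj}=s_{lk}s^k_{\ j}$; the $b_ks_i$-term becomes $-s_0(b_ks^k_{\ j}y^j)=-s_0^2$ via $b_ks^k_{\ j}=s_j$; and the $b_ib_k$-cross-term gives $-2\sigma\beta s_0$.

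The three derivative identities require differentiating the algebraic ones along the Levi-Civita connection of $\alpha$, using the already-stated structural relations $a_{ij;k}=0$, $b^i_{\ ;k}=r^i_{\ k}+s^i_{\ k}$, and $(b^2)_{;k}=2(r_k+s_k)$. For $r^i_{\ i;0}$, I would differentiate $r^i_{\ i}=2\sigma(n-b^2)$ along $y$ to obtain $2\sigma_0(n-b^2)-4\sigma(r_0+s_0)$ and then substitute the already-computed value of $r_0$. For $r_{00;0}$, differentiate $r_{00}=-2\beta s_0+2\sigma(\alpha^2-\beta^2)$, using $\beta_{;k}y^k=b_{j;k}y^jy^k=r_{00}$ (the skew part of $b_{j;k}$ vanishing on $y\otimes y$), and then re-substitute the formula for $r_{00}$ to eliminate the remaining $r_{00}$-terms on the right-hand side. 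For $r^i_{\ ;i}$, take the divergence of $r^i=-b^2s^i+2\sigma(1-b^2)b^i$, using $(b^2)_{;i}s^i=2(1-b^2)s_is^i$ (after inserting $r_is^i=-b^2s_is^i$), $(b^2)_{;i}b^i=2r=4\sigma b^2(1-b^2)$, and $b^i_{\ ;i}=r^i_{\ i}+s^i_{\ i}=2\sigma(n-b^2)$, since $s^i_{\ i}=0$ by skew-symmetry.

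The only real obstacle is careful bookkeeping---sign conventions, the order of raising and lowering indices, and the recursive reuse of earlier identities inside later ones---but no individual step is technically deep. This is precisely why the authors describe the check as ``straightforward.''
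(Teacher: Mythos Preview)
Your proposal is correct and is precisely the ``straightforward'' verification the paper leaves to the reader: polarize $e_{00}=2\sigma(\alpha^2-\beta^2)$ to obtain $r_{ij}$, then raise/contract indices using $s_ib^i=0$, and for the three covariant-derivative identities differentiate the algebraic ones via $a_{ij;k}=0$, $(b^2)_{;k}=2(r_k+s_k)$ and $b^i_{\ ;k}=r^i_{\ k}+s^i_{\ k}$, re-substituting the earlier formulas. This matches the paper's intended argument exactly.
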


Based on these lemmas, we prove the following result.
\begin{thm} \label{thm41} Let $F=\alpha+\beta$ be a non-Riemannian Randers metric on an $n (\geq 2)$-dimensional manifold $M$ and $V$ be a vector field on $M$. Then $(M, F, V)$ is an almost Ricci soliton with soliton scalar $\kappa$ if and only if there are scalar functions $c$, $\sigma$ on $M$ such that ${\mathcal L}_{\hat V}(\alpha)=2c\alpha$ (i.e., $V$ is a conformal vector field of $\alpha$ with conformal factor $c$) and
\beq e_{00}&=& 2\sigma(\alpha^2-\beta^2), \label{e00}\\
 {}^{\alpha}{\rm{Ric}}&=& \kappa(\alpha^2+\beta^2)+(t^i_{\ i}-2c)\alpha^2+2t_{00}-(n-1)\sigma^2(3\alpha^2-\beta^2)
 \nonumber \\
& & -\left((n-1)\sigma_0+ {\mathcal L}_{\hat V}(\beta)\right)\beta-(n-1)(s_0^2+s_{0;0}), \label{a-Ric}\\
s^i_{\ 0;i}&=&\left(\kappa -c\right)\beta+(n-1)\left(\frac 12\sigma_0+t_0+2\sigma s_0+\sigma^2\beta\right)-\frac 12 {\mathcal L}_{\hat V}(\beta),\label{si0i}
\eeq
where $\hat V$ is the complete lift of $V$ and ${\mathcal L}_{\hat V}(\beta)$ is given by (\ref{L-ab})$_2$.
\end{thm}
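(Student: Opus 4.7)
The plan is to substitute the Randers Ricci formula from Lemma~\ref{lem42} and the Lie-derivative formula from Lemma~\ref{lem41} into the soliton equation $2\,\mathrm{Ric}+\mathcal{L}_{\hat V}(F^2)=2\kappa F^2$, clear the denominators in $\Xi$ and in $\mathcal{L}_{\hat V}(F^2)/\alpha$ by multiplying through by $\alpha F^2$, and then separate the resulting identity into its rational and irrational parts with respect to $\alpha$. Since $\alpha=\sqrt{a_{ij}y^iy^j}$ is irrational in $y$ while ${}^{\alpha}\mathrm{Ric}$, $\beta$, the contractions of $r_{ij},s_{ij},t_{ij}$, and the polynomials $\mathcal{L}_{\hat V}(\alpha^2),\mathcal{L}_{\hat V}(\beta)$ are all polynomial in $y$, the cleared identity takes the form $P(x,y)+\alpha\,Q(x,y)=0$ with $P,Q$ polynomial in $y$, which forces $P\equiv 0$ and $Q\equiv 0$ separately.

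From these two polynomial identities the structural conclusions should drop out of the leading-degree pieces. The highest $\alpha$-power terms on the irrational side will force $\alpha^2$ to divide $\mathcal{L}_{\hat V}(\alpha^2)$, so $\mathcal{L}_{\hat V}(\alpha^2)=4c(x)\alpha^2$ for some scalar $c$; by (\ref{L-ab})$_1$ this is exactly $V_{i;j}+V_{j;i}=4c\,a_{ij}$, i.e.\ $\mathcal{L}_{\hat V}(\alpha)=2c\alpha$. Matching the residual terms in the rational part forces $e_{00}=r_{00}+2\beta s_0$ to take the shape $2\sigma(\alpha^2-\beta^2)$ for some scalar $\sigma$, giving (\ref{e00}); by Lemma~\ref{lem-41} this is precisely the isotropic $S_{BH}$-curvature condition. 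This separation trick is the same device used by Bao--Robles in \cite{BR} for Randers Einstein metrics; the novelty here is that the $\mathcal{L}_{\hat V}(F^2)$ contribution pins down both $c$ and, through its $\beta$-coupling, $\sigma_0$ via what will become (\ref{si0i}).

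Once (\ref{e00}) is in hand, Lemma~\ref{lem43} rewrites $r_{ij}$, $r_{00;0}$ and $q_{00}$ in terms of $\sigma$, $b_i$ and $s_i$, collapsing the expansion of $\Xi$. Substituting back into $P=0$ and $Q=0$ and matching $\beta$-coefficients then yields (\ref{si0i}) from the $\alpha$-part and (\ref{a-Ric}) from the rational part. The converse is a direct verification: assuming $\mathcal{L}_{\hat V}(\alpha)=2c\alpha$ together with (\ref{e00})--(\ref{si0i}), substitute everything back and invoke Lemma~\ref{lem43} once more to recover the soliton identity. The main obstacle is the algebraic bookkeeping at the first stage: the quadratic piece $3(r_{00}-2\alpha s_0)^2/(4F^2)$ inside $\Xi$, once multiplied by $\alpha F^2$, produces many cross-terms of both $\alpha$-parities that must be carefully regrouped with the ${}^{\alpha}\mathrm{Ric}$, $\mathcal{L}_{\hat V}$ and $2\kappa F^2$ contributions before the rational/irrational splitting proceeds cleanly.
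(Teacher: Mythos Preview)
Your proposal is essentially the paper's own approach: multiply the soliton identity by $4\alpha F^2$, split the resulting degree-five polynomial in $\alpha$ into its even and odd parts, and then use irreducibility arguments together with Lemma~\ref{lem43} to peel off the three conditions. Two small corrections to your attribution: the conformal condition $\mathcal{L}_{\hat V}(\alpha^2)=4c\alpha^2$ actually comes from the \emph{lowest} term $A_0=2\beta^3\mathcal{L}_{\hat V}(\alpha^2)$ of the \emph{even} (rational) part being divisible by $\alpha^2$, and the isotropic $S_{BH}$ condition (\ref{e00}) is obtained not from one side alone but from the combination $(\text{odd})\cdot\alpha^2-(\text{even})\cdot\beta$, which isolates $3(n-1)\alpha^2 e_{00}^{\,2}$ against a factor $\alpha^2-\beta^2$ and then invokes the irreducibility of $\alpha^2-\beta^2$ (not of $\alpha^2$).
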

\begin{proof}  Assume that $(M, F, V)$ is an almost Ricci soliton with soliton scalar $\kappa$. Then (\ref{RS-kappa}) holds. From this and Lemmas \ref{lem41}-\ref{lem42}, we have
\beq {{}^\alpha}{\rm{Ric}}+2\alpha s^i_{\ 0;i}-2t_{00}-\alpha^2t^i_{\ i}+(n-1)\Xi+\frac{F}{2\alpha}{\mathcal L}_{\hat V}(\alpha^2)+F{\mathcal L}_{\hat V}(\beta)=\kappa F^2,\label{RS1}\eeq where ${\mathcal L}_{\hat V}(\alpha^2)$ and ${\mathcal L}_{\hat V}(\beta)$ are given by (\ref{L-ab}).
By (\ref{Xi}), we have
\beq 4F^2\Xi &=&8\alpha F(q_{00}-\alpha t_0)+3(r_{00}-2s_0\alpha)^2-2F(r_{00;0}-2\alpha s_{0;0})\nonumber \\
&=& 3r_{00}^2-2\beta r_{00;0}-2\left(6r_{00}s_0-4\beta q_{00}+r_{00;0}-2\beta s_{0;0}\right)\alpha \nonumber \\
 & & +4\left(3s_0^2+2q_{00}-2\beta t_0+s_{0;0}\right)\alpha^2-8t_0\alpha^3.\label{F-Xi}\eeq
Multiplying both sides of (\ref{RS1}) by $4\alpha F^2$ and using (\ref{F-Xi}) yield
 \beq A_0+A_1\alpha+A_2\alpha^2+A_3\alpha^3+A_4\alpha^4+A_5\alpha^5=0, \label{A1-5-a*}\eeq
 equivalently,
\beq A_0+A_2\alpha^2+A_4\alpha^4+\alpha\left(A_1+A_3\alpha^2+A_5\alpha^4\right)=0,\label{A1-5-a}\eeq where $A_i(0\leq i\leq 5)$ are given by
\beqn A_0:&=& 2\beta^3{\mathcal L}_{\hat V}(\alpha^2),
\nonumber \\
A_1:&=&4\beta^2({}^{\alpha}{\rm{Ric}})-8\beta^2t_{00}+(n-1)(3r_{00}^2-2\beta r_{00;0})+6\beta^2{\mathcal L}_{\hat V}(\alpha^2)+4\beta^3{\mathcal L}_{\hat V}(\beta)-4\kappa \beta^4,
\nonumber \\
A_2:&=& 8\beta({}^{\alpha}{\rm{Ric}})+8\beta^2 s^i_{\ 0;i}-16\beta t_{00}-2(n-1)\left(6r_{00}s_0-4\beta q_{00}+r_{00;0}-2\beta s_{0;0}\right)\nonumber \\
 & &+6\beta{\mathcal L}_{\hat V}(\alpha^2) +12\beta^2{\mathcal L}_{\hat V}(\beta)-16\kappa\beta^3,
\nonumber \\
A_3:&=& 4({}^{\alpha}{\rm{Ric}})+16\beta s^i_{\ 0;i}-8t_{00}-4\beta^2 t^i_{\ i}+4(n-1)\left(3s_0^2+2q_{00}-2\beta t_0+s_{0;0}\right)\nonumber \\
& &+2{\mathcal L}_{\hat V}(\alpha^2)+12\beta {\mathcal L}_{\hat V}(\beta)-24\kappa\beta^2,
\nonumber \\
A_4:&=& 8s^i_{\ 0;i}-8\beta t^i_{\ i}-8(n-1)t_0+4{\mathcal L}_{\hat V}(\beta)-16\kappa \beta,
\nonumber \\
A_5:&=& -4t^i_{\ i}-4\kappa.
\eeqn
Since $\alpha^2=a_{ij}(x)y^iy^j$ is positive definite, $\alpha$ is an irrational function in $y$. Moreover,  $A_i (0\leq i\leq 5)$ are polynomials in $y$. Consequently, (\ref{A1-5-a}) are equivalent to the following two equations:
\beq & A_0+A_2\alpha^2+A_4\alpha^4=0, \label{A-024}\\
& A_1+A_3\alpha^2+A_5\alpha^4=0.\label{A-135}\eeq
(\ref{A-135})$\times \alpha^2$-(\ref{A-024})$\times \beta$ yields
\beq & & 2(\alpha^2-\beta^2)\Big\{2\alpha^2({}^{\alpha}{\rm{Ric}})-4\alpha^2t_{00}+4\alpha^2\beta s^i_{\ 0;i}-2\alpha^4t^i_{\ i}
+4(n-1)\alpha^2q_{00}+2(n-1)\alpha^2s_{0;0}\nonumber \\ & & +(\alpha^2+\beta^2){\mathcal L}_{\hat V}(\alpha^2)+4\alpha^2\beta{\mathcal L}_{\hat V}(\beta)-2\kappa\alpha^2(\alpha^2+3\beta^2)+6(n-1)\alpha^2s_0^2\Big\}\nonumber \\ & & +3(n-1)\alpha^2(r_{00}+2s_0\beta)^2=0. \label{RS2}\eeq
 In the same way as above, $\alpha^2-\beta^2$ is positive on $TM_0$ because $b=\|\beta\|_\alpha<1$. We deduce that $\alpha^2-\beta^2$ is an irreducible polynomial in $y$. Therefore, from (\ref{RS2}) and $n\geq 2$, there exists a function $\sigma$ on $M$ (i.e., independent of $y$) such that
\beq r_{00}+2s_0\beta=2\sigma(\alpha^2-\beta^2),  \label{r00-s0}\eeq that is, (\ref{e00}) holds.   On the other hand, since the second and third terms on LHS of (\ref{A-024}) include $\alpha^2$ respectively,  $A_0$ must be divided by $\alpha^2$. Note that $A_0= 2\beta^3{\mathcal L}_{\hat V}(\alpha^2)$ and $\beta\neq 0$ since $F$ is non-Riemannian.  Hence there is a scalar function $c$ on $M$ such that
\beq {\mathcal L}_{\hat V}(\alpha^2)=4c\alpha^2, \ \ \ {\rm equivalently}, \ \ V_{i;j}+V_{j;i}=4ca_{ij}.\label{V-conf}\eeq
This means that $V$ is a conformal vector field of $\alpha$ with conformal factor $c$.
Plugging (\ref{r00-s0})-(\ref{V-conf}) into (\ref{RS2}) gives
\beq {{}^\alpha}{\rm{Ric}}&=&\kappa(\alpha^2+3\beta^2)+2t_{00}-2\beta s^i_{\ 0;i}+\alpha^2t^i_{\ i}-2(n-1)q_{00}-(n-1)s_{0;0}\nonumber \\
& & -3(n-1)s_0^2-3(n-1)\sigma^2(\alpha^2-\beta^2)-2c(\alpha^2+\beta^2)-2\beta{\mathcal L}_{\hat V}(\beta). \label{a-Ric-n}\eeq
From this and Lemma \ref{lem43}, we get
\beqn A_2&=&8\kappa\beta(\alpha^2+\beta^2)+4(\alpha^2-\beta^2)\Big\{2c\beta-2(n-1)\sigma^2\beta-4(n-1)\sigma s_0-(n-1)\sigma_0\Big\} \nonumber \\ & &-8\beta^2s^i_{\ 0;i}+8\alpha^2\beta t^i_{\ i}+8(n-1)t_0\beta^2-8c\beta^3-4\beta^2\mathcal L_{\hat V}(\beta). \label{A2}\eeqn
Moreover, $A_0=8c\alpha^2\beta^3$ by (\ref{V-conf}). From these and (\ref{A-024}), one obtains
\beqn 4(\alpha^2-\beta^2)\alpha^2\Big\{-2(\kappa-c)\beta-(n-1)\left(2\sigma^2\beta+4\sigma s_0+\sigma_0+2 t_0\right)+2s^i_{\ 0;i}+\mathcal L_{\hat V}(\beta)\Big\}=0,\eeqn which implies (\ref{si0i}).
 Putting (\ref{si0i}) back in (\ref{a-Ric-n}) and using $q_{00}=-s_0^2-t_0\beta-2\sigma \beta s_0$ yield (\ref{a-Ric}).

The proof of sufficiency is straightforward. Assume that there are functions $c$ and $\sigma$ on $M$ such that ${\mathcal L}_{\hat V}(\alpha^2)=4c\alpha^2$ and (\ref{e00})-(\ref{a-Ric}) hold.  From (\ref{Xi})-(\ref{e00}) and Lemma \ref{lem43}, one obtains
\beq \Xi&=& -\frac{2\alpha}F\left(s_0^2+2\sigma s_0\beta+Ft_0\right)+3\left(s_0-\sigma(\alpha-\beta)\right)^2\nonumber \\
& & -\frac 1{F}\left\{-Fs_{0;0}+2s_0^2\beta+4\sigma s_0\beta^2
+F(\sigma_0-2\sigma s_0-4\sigma^2\beta)(\alpha-\beta)\right\}\nonumber \\
&=&-2(t_0+2\sigma s_0)\alpha+s_0^2+s_{0;0}+\sigma^2(\alpha-\beta)(3\alpha+\beta)-\sigma_0(\alpha-\beta).\label{Xi-formula}\eeq
Plugging this and (\ref{a-Ric})-(\ref{si0i}) in (\ref{Ric-F}) yields
 \beq {\rm Ric}=\kappa F^2-2c\alpha F-F{\mathcal L}_{\hat V}(\beta).\label{Ric-Fab}\eeq On the other hand, by Lemma \ref{lem41} and ${\mathcal L}_{\hat V}(\alpha^2)=4c\alpha^2$, we have ${\mathcal L}_{\hat V}(F^2)=4c\alpha F+2F{\mathcal L}_{\hat V}(\beta)$. From this and (\ref{Ric-Fab}), we get (\ref{RS-kappa}), i.e., $(M, F, V)$ is an almost Ricci soliton with soliton scalar $\kappa$.\end{proof}

To prove Theorem \ref{thm12}, we need the following lemmas.
\begin{lem}\label{lem45} Assume that $e_{00}=2\sigma (\alpha^2-\beta^2)$ for some function $\sigma$ on $M$. Then
\beq & & b^js_{j;0}=-t_0-2\sigma s_0+(s_js^j)\beta, \label{bs-1}\\
 & & b^js_{0;j}=-t_0+2\sigma s_0+2\sigma_0b^2-2(\sigma_jb^j)\beta+(s_js^j)\beta, \label{bs-2} \\
 & & r^i_{\ 0;i}= -s^i_{\ ;i}\beta+2t_0-2\sigma(n+1-2b^2)(s_0+2\sigma \beta)+2\sigma_0(1-b^2), \label{ri0i} \\
& & r^i_{\ ;i}-r^i_{\ j}r^j_{\ i}-b^jr^i_{\ i;j}=-2s_is^i-b^2s^i_{\ ;i}-2(n-1)(\sigma_jb^j)-4(n-1)\sigma^2b^2. \label{rrrr} \eeq
\end{lem}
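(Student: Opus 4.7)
\emph{Proof plan.} All four identities follow from combining the explicit formulas of Lemma \ref{lem43} with direct covariant differentiation of $s_j = b^i s_{ij}$, of $r^i_{\ j}$, and of the scalar $r^i_{\ i}=2\sigma(n-b^2)$. I will repeatedly use $b^i s_{ij}=s_j$ but $b^j s_{ij}=-s_i$ (opposite signs under $b$-contraction by antisymmetry of $s$), together with $b^j s^i_{\ j}=-s^i$, $s^i_{\ i}=0$, $b^i s_i=0$, the scalar identity $b^i b^j s_{ij}=0$, and its differentiated version $b^i b^j s_{ij;k}=0$ (whose two boundary terms cancel after relabelling, again by the antisymmetry of $s$).

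\emph{Identities (\ref{bs-1}) and (\ref{bs-2}).} Differentiating $s_j=b^i s_{ij}$ and contracting with $b^j$ yields
\[
b^j s_{j;0}=b^j b^i_{\ ;0}s_{ij}+b^j b^i s_{ij;0},\qquad b^j s_{0;j}=b^j b^i_{\ ;j}s_{i0}+b^j b^i s_{i0;j}.
\]
In the first, the second piece vanishes as above, and $b^j(r^i_{\ 0}+s^i_{\ 0})s_{ij}$ expands via Lemma \ref{lem43} into $-\beta(s_i s^i)+2\sigma s_0+t_0$, proving (\ref{bs-1}). For (\ref{bs-2}), the term $b^j b^i s_{i0;j}$ does not vanish outright; the key step is to write $s_{i0;j}=s_{ij;k}y^k+(s_{ik;j}-s_{ij;k})y^k$ and apply the Ricci identity $b_{i;j;k}-b_{i;k;j}=-b_l \bar R^l{}_{ijk}$ to $b_{i;j}=r_{ij}+s_{ij}$, which gives $s_{ik;j}-s_{ij;k}=(r_{ij;k}-r_{ik;j})+b_l \bar R^l{}_{ijk}$. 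The curvature contribution contracted against $b^i b^j y^k$ vanishes by the first-pair antisymmetry $b^l b^i \bar R_{lijk}=0$. What remains is $b^j b^i(r_{ij;k}-r_{ik;j})y^k$; substituting $r_{ij}=2\sigma(a_{ij}-b_i b_j)-s_i b_j-s_j b_i$, expanding with Lemma \ref{lem43}, and feeding in (\ref{bs-1}) yields a scalar equation for $b^j s_{0;j}$ with overall coefficient $1-b^2\neq 0$, from which (\ref{bs-2}) follows after division.

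\emph{Identities (\ref{ri0i}) and (\ref{rrrr}).} Both are routine. For (\ref{ri0i}), differentiate $r^i_{\ 0}=-\beta s^i-b^i s_0+2\sigma(y^i-\beta b^i)$ in $;i$, using $b^i_{\ ;i}=r^i_{\ i}=2\sigma(n-b^2)$ (since $s^i_{\ i}=0$), $\beta_{;i}=r_{0i}+s_{0i}$ with $r_{0i}$ from Lemma \ref{lem43}, and $b^i s_{0;i}$ from (\ref{bs-2}); the $(s_i s^i)\beta$ and $(\sigma_i b^i)\beta$ contributions cancel pairwise, and the remaining terms assemble into (\ref{ri0i}). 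For (\ref{rrrr}), Lemma \ref{lem43} supplies $r^i_{\ ;i}$ directly; $b^j r^i_{\ i;j}$ is obtained by differentiating $r^i_{\ i}=2\sigma(n-b^2)$ and using $b^k(r_k+s_k)=r=2\sigma b^2(1-b^2)$; and $r^i_{\ j}r^j_{\ i}$ is expanded by multiplying out the four summands of $r^i_{\ j}=-s^i b_j-b^i s_j+2\sigma\delta^i_j-2\sigma b^i b_j$, where only the crossings not killed by $b^i s_i=0$ survive, summing to $2b^2(s_i s^i)+4\sigma^2(n-2b^2+b^4)$. Linear combination gives (\ref{rrrr}). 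The only genuinely conceptual step in the whole proof is the Ricci-identity detour used for (\ref{bs-2}); everything else is a careful but routine bookkeeping of index positions and aggregation of the monomials in $\sigma$, $s_0$, $\beta$, $b^2$.
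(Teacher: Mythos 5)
Your plan is sound and, for three of the four identities, coincides with what the paper actually does: (\ref{bs-1}) is obtained by differentiating the constraint $s_jb^j=0$ (the paper writes $b^js_{j;0}=-s^jb_{j;0}=-s^j(r_{j0}+s_{j0})$, which is the same computation as your boundary-term cancellation, only packaged more compactly), while (\ref{ri0i}) and (\ref{rrrr}) are exactly the direct differentiations of $r^i_{\ 0}$ and of $r^i_{\ i}=2\sigma(n-b^2)$, plus the expansion of $r^i_{\ j}r^j_{\ i}$, that you describe; your intermediate values $r^i_{\ j}r^j_{\ i}=2b^2(s_is^i)+4\sigma^2(n-2b^2+b^4)$ and $b^jr^i_{\ i;j}=2(n-b^2)(\sigma_jb^j)-8\sigma^2b^2(1-b^2)$ agree with the paper's. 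The one place you genuinely diverge is (\ref{bs-2}): the paper gives no argument at all and simply cites the identical computation (7.27) of \cite{CS2}, whereas you supply a self-contained derivation via the Ricci identity applied to $b_{i;j}=r_{ij}+s_{ij}$, with the curvature term killed by first-pair antisymmetry and the resulting equation closing up implicitly in $b^js_{0;j}$ with coefficient $1-b^2>0$. That detour is structurally correct (the term $-s_{k;j}b_i$ inside $r_{ik;j}$ is indeed what reproduces $b^2\,b^js_{0;j}$ on the right-hand side) and has the merit of making the lemma independent of the external reference, at the cost of a longer bookkeeping computation.

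One slip to fix: in your treatment of (\ref{bs-1}) you assert that $b^j(r^i_{\ 0}+s^i_{\ 0})s_{ij}$ expands to $-\beta(s_is^i)+2\sigma s_0+t_0$. By your own stated convention $b^js_{ij}=-s_i$, this contraction equals $-s_i(r^i_{\ 0}+s^i_{\ 0})=\beta(s_is^i)-2\sigma s_0-t_0$; what you wrote is $+s_i(r^i_{\ 0}+s^i_{\ 0})$, i.e.\ the negative, and taken literally it would yield the negative of (\ref{bs-1}). The method is fine; only the displayed value needs its overall sign reversed.
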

\begin{proof} Since $s_jb^j=b_js^j=0$, by Lemma \ref{lem43}, we have
\beqn b^js_{j;0}=-s^jb_{j;0}=-s^j(r_{j0}+s_{j0})=-t_0-2\sigma s_0+(s_js^j)\beta.\eeqn Similarly, by the same calculation as (7.27) in \cite{CS2}, we have (\ref{bs-2}).  Note that $\beta_{;j}=r_{0j}+s_{0j}$, $b^i_{\ ;i}=r^i_{\ i}$ and $(b^2)_{;j}=2(r_j+s_j)$.  By Lemma \ref{lem43} and (\ref{bs-2}), we have
 \beqn r^i_{\ 0;i}&=&-s^i_{\ ;i}\beta-r_{0i}s^i+t_0-(s_0+2\sigma\beta)r^i_{\ i}-b^is_{0;i}+2\sigma_0-2(\sigma_ib^i)\beta-2\sigma(r_0-s_0)\nonumber \\
 &=& -s^i_{\ ;i}\beta+2t_0-2\sigma(n+1-2b^2)(s_0+2\sigma \beta)+2\sigma_0(1-b^2). \eeqn
Moreover, by Lemma \ref{lem43} again, we get
\beqn r^i_{\ j}r^j_{\ i}=2b^2(s_is^i)+4\sigma^2(n-2b^2+b^4), \ \ \ \ \   b^jr^i_{\ i;j}=2(n-b^2)(\sigma_jb^j)-8\sigma^2b^2(1-b^2).\eeqn From these and the formula for $r^i_{\ ;i}$ in Lemma \ref{lem43}, one obtains (\ref{rrrr}).
\end{proof}

\begin{lem} \label{lem44} Under (\ref{e00}) and (\ref{a-Ric}), we have
\beq s^i_{\ 0;i}&=&(\kappa-2c)\beta+(n-1)\left\{\sigma^2\beta+t_0+2\sigma s_0-\frac{3b^2-4}2\sigma_0-\frac{3(1-b^2)}{2(1+b^2)}(\sigma_ib^i)\beta\right\}\nonumber \\ & &+\frac{2cb^2}{1+b^2}\beta-\frac {b^2}2{\mathcal L}_{\hat V}(\beta)-\frac{1-b^2}{2(1+b^2)}\left(V^kb_{j;k}+b^kV_{k;j}\right)b^j\beta.\label{si0i*}\eeq
\end{lem}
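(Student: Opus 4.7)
The plan is to derive (\ref{si0i*}) by extracting the Ricci tensor $\overline{R}_{ij}$ of the underlying Riemannian metric $\alpha$ from (\ref{a-Ric}) and then exploiting the contracted second Bianchi identity $\overline{R}_{ij;}{}^j = \tfrac{1}{2}\overline{R}_{;i}$. Since ${}^{\alpha}\mathrm{Ric}(x,y) = \overline{R}_{ij}(x) y^i y^j$, matching the symmetric coefficients of $y^iy^j$ on both sides of (\ref{a-Ric}) gives a closed tensorial formula for $\overline{R}_{ij}$ as a symmetric combination of $a_{ij}$, $b_ib_j$, $t_{ij}$, $s_is_j$, $s_{(i;j)}$, $\sigma_{(i}b_{j)}$, and $L_{(i}b_{j)}$, where $L_i$ is defined by $\mathcal{L}_{\hat V}(\beta) = L_iy^i$. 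Tracing with $a^{ij}$ gives the scalar curvature $\overline{R}$, and the $y$-directional derivative $\overline{R}_{;0}$ is then computed using $b^i_{\ ;k} = r^i_{\ k}+s^i_{\ k}$ together with (\ref{e00}) and Lemma~\ref{lem43}.

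Next, I would compute $\overline{R}_{ij;}{}^j y^i = \nabla^j(\overline{R}_{ij}y^i)$ term by term from the tensor decomposition above. The key contribution comes from the divergence of the $(n-1)\,s_{(i;j)}y^i$ piece, whose $a^{jk}$-contraction yields $(n-1)\,s^k_{\ 0;k}$ after commuting covariant derivatives via the Ricci identity; the resulting Ricci-curvature correction is re-absorbed into the already-known expression for $\overline{R}_{ij}$. The divergences of the remaining pieces -- those in $b_ib_j$, $\sigma_ib_j$, $L_ib_j$, $t_{ij}$, and $s_is_j$ -- are simplified by (\ref{bs-1})--(\ref{bs-2}), (\ref{ri0i}), and (\ref{rrrr}) of Lemma~\ref{lem45}, which are precisely the identities needed to dispose of the $r$-divergences that arise.

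Equating the result with $\tfrac{1}{2}\overline{R}_{;0}$ produces a single scalar identity, linear in $y$, in which $s^i_{\ 0;i}$ appears multiplied by a factor proportional to $(1+b^2)$; solving for $s^i_{\ 0;i}$ gives (\ref{si0i*}). The rational prefactors $1/(1+b^2)$ and the $(1-b^2)$-dependent coefficients in the final formula have this algebraic origin.

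The main obstacle is the sheer extent of the bookkeeping: each term of the tensorial $\overline{R}_{ij}$ must be differentiated, contracted, and simplified, with coefficients in $\sigma$, $\sigma_0$, $\sigma_ib^i$, $t_0$, $s_0$, $\mathcal{L}_{\hat V}(\beta)$, and $(V^kb_{j;k}+b^kV_{k;j})b^j$ all needing careful tracking. Lemma~\ref{lem45} is indispensable: in particular, the combination $r^i_{\ ;i} - r^i_{\ j}r^j_{\ i} - b^jr^i_{\ i;j}$ appearing in (\ref{rrrr}) is exactly what emerges from the Bianchi computation when the $\kappa\,b_ib_j$ and $-(n-1)\sigma^2\,b_ib_j$ terms of $\overline{R}_{ij}$ are differentiated, so (\ref{rrrr}) is what makes the extraction of $s^i_{\ 0;i}$ algebraically clean.
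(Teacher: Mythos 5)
Your overall strategy --- recover the full Ricci tensor ${}^\alpha{\rm Ric}_{ij}$ of $\alpha$ from (\ref{a-Ric}) by polarization and then apply the contracted second Bianchi identity --- is not the route the paper takes, and it contains a gap that I do not think can be repaired as stated. The paper's starting point is the Ricci identity for the $1$-form $\beta$ (the commutation of second covariant derivatives of $b_i$, i.e., (7.21) of Cheng--Shen), recorded as (\ref{si0i**}): $s^i_{\ 0;i}={}^\alpha{\rm Ric}_{0j}b^j+r^i_{\ i;0}-r^i_{\ 0;i}$. This identity produces the target quantity $s^i_{\ 0;i}$ linearly and for free; all that remains is to evaluate ${}^\alpha{\rm Ric}_{0j}b^j$ and its further $b$-contraction from (\ref{a-Ric}), and the $r$-terms from Lemmas \ref{lem43} and \ref{lem45}. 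No divergence of the full Ricci tensor is ever taken.

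Your Bianchi route, by contrast, must manufacture $s^i_{\ 0;i}$ from the divergence of the right-hand side of (\ref{a-Ric}), and the step you rely on --- that $\nabla^j s_{(i;j)}$ ``yields $(n-1)s^k_{\ 0;k}$ after commuting covariant derivatives'' --- confuses two different objects. In $s_{0;0}=s_{i;j}y^iy^j$ the tensor being differentiated is the $1$-form $s_i=b^ks_{ki}$, so its divergence gives the rough Laplacian $\nabla^j\nabla_j s_i$ and the gradient of the scalar $s^j_{\ ;j}$ (plus curvature corrections), whereas $s^i_{\ 0;i}$ is the divergence of the antisymmetric $2$-tensor $s_{ij}$ in its first slot; commuting covariant derivatives converts neither of the former into the latter. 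The only place $\nabla^j s_{ij}$ actually shows up in the divergence of (\ref{a-Ric}) is inside $\nabla^j t_{ij}$, where it is multiplied by $s_{ik}$ and hence cannot be solved for linearly. In addition, the divergences of the $\sigma_{(i}b_{j)}$ and $L_{(i}b_{j)}$ pieces introduce second derivatives of $\sigma$ and of $V$ that do not occur in (\ref{si0i*}) and are not controlled by any of Lemmas \ref{lem43}--\ref{lem45}, and you propose no mechanism for their cancellation. Finally, the rational coefficients in (\ref{si0i*}) do not come from a $(1+b^2)$ factor in front of $s^i_{\ 0;i}$: in the paper they arise because (\ref{rrrr}) reintroduces $-b^2 s^i_{\ ;i}$ into (\ref{sii}), so it is the scalar $s^i_{\ ;i}$ that is solved for with a $(1+b^2)$ denominator, after which $s^i_{\ 0;i}$ is read off directly from (\ref{si0i**}).
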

\begin{proof} Let ${{}^\alpha}{\rm Ric}_{ij}$ be the Ricci tensor of $\alpha$. By (7.21) in \cite{CS2}, we have
\beq  s^i_{\ 0;i}={{}^\alpha}{\rm{Ric}}_{0j}b^j+r^i_{\ i;0}-r^i_{\ 0;i}, \label{si0i**}\eeq where  ${{}^\alpha}{\rm{Ric}}_{0j}={{}^\alpha}{\rm{Ric}}_{ij}y^i$.  From this, we have $ s^i_{\ j;i}={{}^\alpha}{\rm{Ric}}_{jl}b^l+r^i_{\ i;j}-r^i_{\ j;i}$. Thus
\beq s^i_{\ ;i}&=&-(b^js^i_{\ j})_{;i}=-(r^j_{\ i}+s^j_{\ i})s^i_{\ j}-b^js^i_{\ j;i}\nonumber \\
&=&-t_{\ j}^j-b^jb^l({{}^\alpha}{\rm{Ric}}_{jl})+r^i_{\ ;i}-r^i_{\ j}r^j_{\ i}-b^jr^i_{\ i;j}. \label{sii}\eeq
 By (\ref{a-Ric}), we have
 \beq {{}^\alpha}{\rm{Ric}}_{0j}b^j&=&\frac 12\left[{{}^\alpha}{\rm{Ric}}\right]_{y^iy^j}y^i b^j
=\frac 12\left[{{}^\alpha}{\rm{Ric}}\right]_{y^j}b^j \nonumber \\
 &=&\kappa(1+b^2)\beta+(t^i_{\ i}-2c)\beta+2t_0-(n-1)\sigma^2(3-b^2)\beta-\frac {1}2 b^2{\mathcal L}_{\hat V}(\beta)\label{Ric-bj} \\
 & &-\frac 12(n-1)\left\{\sigma_0b^2+(\sigma_jb^j)\beta+ b^js_{j;0}+b^js_{0;j}\right\} -\frac 12\left(V^kb_{j:k}+b^kV_{k;j}\right)b^j\beta,\nonumber \eeq
where we used $s_jb^j=b_js^j=0$.
Plugging (\ref{bs-1})-(\ref{bs-2}) in (\ref{Ric-bj}) leads to
 \beq {{}^\alpha}{\rm{Ric}}_{0j}b^j&=&\kappa(1+b^2)\beta+(t^i_{\ i}-2c)\beta-(n-1)\left\{\sigma^2(3-b^2)+s_js^j-\frac 12(\sigma_jb^j)\right\}\beta\nonumber \\
 & &-\frac 32(n-1)\sigma_0b^2+(n+1)t_0 -\frac 12b^2{\mathcal L}_{\hat V}(\beta)-\frac 12\left(V^kb_{j:k}+b^kV_{k;j}\right)b^j\beta.\label{Ric-bj-1}\eeq
Differentiating  this equation with respect to $y^l$ and then contracting this with $b^l$ yield
\beq b^jb^l({{}^\alpha}{\rm Ric}_{jl})&=&\kappa(1+b^2)b^2+b^2(t^i_{\ i}-2c)-(n-1)\left\{\sigma^2(3-b^2)+s_js^j\right\}b^2 \nonumber \\
& &-(n-1)(\sigma_lb^l)b^2-(n+1)s_ks^k- b^2\left(V^kb_{l:k}+b^kV_{k;l}\right)b^l.\label{Ric-bij} \eeq
Since $r^j_{\ i}s^i_{\ j}=a^{jk}r_{ki}a^{il}s_{lj}=a^{jk}r^l_{\ k}s_{lj}=-r^l_{\ k}s^k_{\ l}$, we have $r^j_{\ i}s^i_{\ j}=0$.
 Inserting (\ref{rrrr}) and (\ref{Ric-bij}) in (\ref{sii}) gives
\beqn s^i_{\ ;i}&=&-t^i_{\ i}-\kappa b^2+(n-1)\left(s_is^i-\sigma^2b^2+\frac{b^2-2}{1+b^2}(\sigma_ib^i)\right)\\
& &+\frac{2cb^2}{1+b^2}+\frac{b^2}{1+b^2} \left(V^kb_{l;k}+b^kV_{k;l}\right)b^l. \eeqn From this and (\ref{ri0i}), one obtains
\beqn r^i_{\ 0;i}&=& (\kappa b^2+t^i_{\ i})\beta+2t_0-2\sigma(n+1-2b^2)( s_0+2\sigma\beta)+2\sigma_0(1-b^2)-\frac{2cb^2}{1+b^2}\beta \nonumber \\
 & &+(n-1)\left(\sigma^2b^2-s_is^i+\frac{2-b^2}{1+b^2}(\sigma_ib^i)\right)\beta -\frac{b^2}{1+b^2}\left(V^kb_{j;k}+b^kV_{k;j}\right)b^j\beta. \eeqn
 Since $r^i_{\ i;0}=2\sigma_0(n-b^2)-4\sigma(1-b^2)(s_0+2\sigma\beta)$ by Lemma \ref{lem43}, we have
 \beqn r^i_{\ i;0}-r^i_{\ 0;i}&=& -2t_0-(\kappa b^2+t^i_{\ i})\beta +(n-1)\left(4\sigma^2-\sigma^2b^2+s_is^i-\frac{2-b^2}{1+b^2}(\sigma_ib^i)\right)\beta\nonumber \\
 & & +2(n-1)\left(\sigma_0+\sigma s_0\right)+\frac{2cb^2}{1+b^2}\beta +\frac{b^2}{1+b^2}\left(V^kb_{j;k}+b^kV_{k;j}\right)b^j\beta. \label{rr}\eeqn
Plugging this and (\ref{Ric-bj-1}) into (\ref{si0i**}) yields (\ref{si0i*}).
\end{proof}

\begin{lem}\label{lem47} Under (\ref{e00}) and (\ref{a-Ric}), the equations (\ref{si0i}) and (\ref{tau-0-1}) are equivalent. \end{lem}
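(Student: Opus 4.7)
The plan is to exploit the fact that Lemma~\ref{lem44} already delivers a formula (\ref{si0i*}) for $s^i_{\ 0;i}$ that is valid whenever (\ref{e00}) and (\ref{a-Ric}) hold, independently of any soliton equation. Thus (\ref{si0i}) is equivalent to the statement that the right-hand sides of (\ref{si0i}) and (\ref{si0i*}) agree. My first step is to subtract the two expressions. Collecting the coefficients of $\beta$, $\sigma_0$, $(\sigma_ib^i)\beta$, $\mathcal L_{\hat V}(\beta)$ and $(V^kb_{j;k}+b^kV_{k;j})b^j\beta$ separately (all other terms, namely those in $t_0$, $\sigma s_0$ and $\sigma^2\beta$, cancel), one should find that the difference factors as
\[
\frac{1-b^2}{2(1+b^2)}\Bigl\{\,2c\beta-(1+b^2)\mathcal L_{\hat V}(\beta)+(V^kb_{j;k}+b^kV_{k;j})b^j\beta+3(n-1)(\sigma_ib^i)\beta-3(n-1)(1+b^2)\sigma_0\,\Bigr\}.
\]
Since $b<1$, the prefactor is strictly positive, and so (\ref{si0i}) is equivalent to the vanishing of the bracketed quantity; call this equation $(I)$.

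Next I would read $(I)$ and (\ref{tau-0-1}) as polynomial identities in $y$. Writing $A_j:=V^kb_{j;k}+b^kV_{k;j}$ so that $\mathcal L_{\hat V}(\beta)=A_0=A_jy^j$, equation $(I)$ becomes the tensor identity
\begin{equation*}
2cb_j-(1+b^2)A_j+(A_kb^k)b_j+3(n-1)(\sigma_kb^k)b_j-3(n-1)(1+b^2)\sigma_j=0,
\end{equation*}
while (\ref{tau-0-1}) is the tensor identity $3(n-1)\sigma_j=2cb_j-A_j$. The implication $(\ref{tau-0-1})\Rightarrow(I)$ is a direct substitution. For the converse, I would contract $(I)$ with $b^j$: the terms involving $(1+b^2)$ collapse to $(1+b^2)-b^2=1$ after using $b_jb^j=b^2$, yielding the scalar identity
\[
A_kb^k+3(n-1)\sigma_kb^k=2cb^2.
\]
Plugging this scalar identity back into $(I)$ turns it into $(1+b^2)\{2cb_j-A_j-3(n-1)\sigma_j\}=0$, which after dividing by $1+b^2>0$ gives (\ref{tau-0-1}).

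The only real obstacle is careful bookkeeping in the subtraction: the $c\beta$-terms combine via $c-\tfrac{2cb^2}{1+b^2}=\tfrac{c(1-b^2)}{1+b^2}$, the $\sigma_0$-terms via $\tfrac{n-1}{2}+\tfrac{(n-1)(3b^2-4)}{2}=\tfrac{3(n-1)(b^2-1)}{2}$, and the $\mathcal L_{\hat V}(\beta)$-terms via $-\tfrac{1}{2}+\tfrac{b^2}{2}=-\tfrac{1-b^2}{2}$, producing the common factor $(1-b^2)$. Once this factor is isolated, the rest is linear algebra in $y^j$ together with a single contraction with $b^j$, so no further input beyond the conformal condition $V_{i;j}+V_{j;i}=4ca_{ij}$ and (\ref{e00}) is needed; indeed these two hypotheses were already consumed in deriving (\ref{si0i*}) inside Lemma~\ref{lem44}.
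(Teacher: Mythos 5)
Your proposal is correct and follows essentially the same route as the paper: both equate the right-hand sides of (\ref{si0i}) and (\ref{si0i*}) from Lemma \ref{lem44}, reduce to a single linear identity in $y$ (your equation $(I)$ is the paper's intermediate equation (\ref{tau-0}) multiplied through by $1+b^2$), contract with $b^j$ to extract the scalar relation $3(n-1)\sigma_kb^k+\bigl(V^kb_{j;k}+b^kV_{k;j}\bigr)b^j=2cb^2$, and substitute back to obtain (\ref{tau-0-1}). Your bookkeeping of the coefficients and the factor $\tfrac{1-b^2}{2(1+b^2)}>0$ checks out, so no gap.
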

\begin{proof}  Assume that (\ref{si0i}) holds. By Lemma \ref{lem44}, the right hand sides of (\ref{si0i}) and (\ref{si0i*}) are equal, which means that
\beq 3(n-1)\sigma_0=\frac{2c}{1+b^2}\beta+\frac{3(n-1)}{1+b^2}(\sigma_ib^i)\beta-{\mathcal L}_{\hat V}(\beta)+\frac{1}{1+b^2}\left(V^kb_{j;k}+b^kV_{k;j}\right)b^j\beta.\label{tau-0}\eeq
Differentiating this with respect to $y^i$ and then contracting this with $b^i$ yield
\beq 3(n-1)\sigma_ib^i=2cb^2-\left(V^kb_{j;k}+b^kV_{k;j}\right)b^j.\label{tau-b}\eeq
Inserting (\ref{tau-b}) into (\ref{tau-0}) gives (\ref{tau-0-1}).

Conversely, assume that (\ref{tau-0-1}) holds. Then
\beqn 3(n-1)\sigma_i=2cb_i-\left(V^kb_{i;k}+b^kV_{k;i}\right),\eeqn which implies
(\ref{tau-b}). Plugging (\ref{tau-b}) and (\ref{tau-0-1}) into (\ref{si0i*}) leads to (\ref{si0i}).
\end{proof}

 \begin{proof}[Proof of Theorem~{\upshape\ref{thm12}}] It follows from Theorem \ref{thm41}, Lemma \ref{lem47} and substituting $\mathcal L_{\hat V}(\beta)$ in (\ref{a-Ric}) with $\mathcal L_{\hat V}(\beta)=2c\beta-3(n-1)\sigma_0$. \end{proof}

\section{Navigation description of Randers almost Ricci solitons} \label{sec5}

In this section, we shall give a navigation description of almost Ricci solitons for Randers metrics $F=\alpha+\beta$. Given a navigation data $(h, W)$ of $F$, we express $F$ in the form (\ref{F-hW}),  where
\beq a_{ij}=\frac {h_{ij}}{\lambda}+\frac{W_iW_j}{\lambda^2}, \ \ \ \  b_i=-\frac {W_i}{\lambda}, \label{ab}\eeq where $W_i=h_{ij}W^j$ and $\lambda =1-\|W\|_h^2$. Then the components of inverse matrix $(a_{ij})^{-1}$ and $b^i=a^{ij}b_j$ are respectively given by
\beq a^{ij}=\lambda(h^{ij}-W^iW^j), \ \ \ \ b^i=-\lambda W^i. \label{ab-hw}\eeq Conversely, given a Riemannian metric $\alpha=\sqrt{a_{ij}(x)y^iy^j}$ and a 1-form $\beta=b_i(x)y^i$, we also have
\beqn h_{ij}=\lambda(a_{ij}-b_ib_j), \ \ \ \ W^i=-\frac {b^i}{\lambda}. \eeqn We refer to \cite{BRS} for more details.
 \begin{lem} \label{lem51} (\cite{Xing}) Let $(M, F)$ be a non-Riemannian Randers metric with navigation data $(h, W)$. Then $F$ is of isotropic S$_{BH}$-curvature $\sigma$ if and only if $W$ is a conformal vector field of $h$ with conformal factor $-\sigma$, i.e.,  $W$ satisfies $ W_{j:k}+W_{k:j}=-4\sigma h_{jk}.$ \end{lem}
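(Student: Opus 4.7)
The plan is to deduce the equivalence from Lemma \ref{lem-41} by translating its conclusion $e_{00}=2\sigma(\alpha^2-\beta^2)$ from the $(\alpha,\beta)$-side to the navigation side $(h,W)$. First I would record the clean algebraic identity that underlies navigation: squaring the defining relation $\lambda F + W_0 = \sqrt{\lambda h^2 + W_0^2}$ yields $h^2 = \lambda F^2 + 2FW_0$, and combining this with $\beta = -W_0/\lambda$ and $\alpha^2-\beta^2 = F(F-2\beta) = F^2 + 2FW_0/\lambda$ gives the key simplification
\begin{equation*}
\alpha^2-\beta^2 = \frac{h^2}{\lambda}.
\end{equation*}
So the condition of Lemma \ref{lem-41}(2) is equivalent to $\lambda\, e_{00} = 2\sigma h^2$.

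Second, I would express $b_{i;j}$ in terms of $W_{i:j}$ by computing the difference tensor between the Levi-Civita connections of $\alpha$ and $h$. Writing $\Gamma^k_{ij} - {}^h\Gamma^k_{ij} =: T^k_{ij}$, the Koszul formula applied to the perturbation $a_{ij} = h_{ij}/\lambda + W_iW_j/\lambda^2$ in (\ref{ab}) produces $T^k_{ij}$ as an explicit expression in $W^l$, $W_{l:m}$ and the first derivatives of $\lambda$. Differentiating $b_i = -W_i/\lambda$ then gives $b_{i;j} = -W_{i:j}/\lambda + (\text{lower order terms in } W, \partial\lambda, T)$. Symmetrising and antisymmetrising produces $r_{ij}$, $s_{ij}$, and after contracting with $b^i = -\lambda W^i$ also $s_j$ and finally $e_{ij}=r_{ij}+b_is_j+b_js_i$.

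Third, I would substitute these expressions into $\lambda\, e_{00}-2\sigma h^2=0$, clear denominators in $\lambda$, and collect by tensorial type. The antisymmetric part $\mathcal S_{ij} = \frac12(W_{i:j}-W_{j:i})$ should drop out of the symmetric combination $e_{00}$ after repeated use of $b^i s_i = 0$, $W^iW_i = 1-\lambda$ and $\lambda_{,i} = -2W^jW_{j:i} - 2W^j{}^h\Gamma^k_{ji}W_k$ (itself a consequence of $\lambda = 1-\|W\|_h^2$). What remains is the symmetric tensor $\mathcal R_{ij} := \frac12(W_{i:j}+W_{j:i})$ contracted with $y^iy^j$, modulo a multiple of $h^2$. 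The irreducibility of $h^2$ then forces $\mathcal R_{ij} = -2\sigma h_{ij}$, i.e.\ $W_{j:k}+W_{k:j} = -4\sigma h_{jk}$. Running the computation in reverse establishes the converse.

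The main obstacle is the connection-change bookkeeping in the second step: because $a_{ij}$ differs from $h_{ij}$ by both a conformal factor $\lambda^{-1}$ and a rank-one correction $W_iW_j/\lambda^2$, the difference tensor $T^k_{ij}$ is messier than in a pure conformal change, and one must track how the $\lambda^{-1}$ in $b_i$ interacts with derivatives of $\lambda$. The cancellations that make $\mathcal S_{ij}$ drop out of $e_{00}$ and that turn the remaining sum into a multiple of $h_{ij}$ are the crux; once accomplished, the identification with the conformal condition on $(h,W)$ is immediate from the irreducibility of $h^2$ as a polynomial in $y$.
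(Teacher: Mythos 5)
Your strategy is sound, but note first that the paper does not prove this lemma at all: it is quoted from \cite{Xing}, so there is no in-paper argument to match yours against. That said, your route is the natural one, and the heavy bookkeeping you defer in your second step is essentially already recorded later in the paper: equations (\ref{r-ij-h})--(\ref{rs-j-h}) in \S 6 (taken from \cite{BR} and \cite{Ro}) express $r_{ij}$, $s_{ij}$ and $s_j$ in terms of $\mathcal R_{ij}$, $\mathcal S_{ij}$, $\mathcal R_j$, $\mathcal S_j$. Feeding those into $e_{ij}=r_{ij}+b_is_j+b_js_i$ with $b_i=-W_i/\lambda$, all the terms involving $\mathcal S_i$, $\mathcal R_i$ and $W_iW_j$ cancel and one is left with the clean identity $e_{ij}=-\mathcal R_{ij}-\lambda^{-1}\mathcal R\, h_{ij}$, which is exactly the cancellation you predicted. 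Combined with your identity $\alpha^2-\beta^2=h^2/\lambda$ (correct, and obtainable even more directly from (\ref{ab})), the condition of Lemma \ref{lem-41} becomes $\mathcal R_{00}=-\lambda^{-1}(2\sigma+\mathcal R)h^2$; irreducibility of $h^2$ gives $\mathcal R_{ij}=c\,h_{ij}$ for some scalar $c$, and contracting with $W^iW^j$ (so that $\mathcal R=c(1-\lambda)$) pins down $c=-2\sigma$, while the converse is the same computation read backwards. Two caveats: your proposal is a plan rather than a completed proof, since the difference-tensor computation producing (\ref{r-ij-h})--(\ref{rs-j-h}) is the entire technical content of the lemma and you do not carry it out; and the last step needs the small extra contraction with $W$ to fix the conformal factor as $-\sigma$, rather than following from irreducibility alone as your wording suggests. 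Neither is a gap in the idea; both are routine once the \S 6 formulas are in hand.
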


 Let $\xi:=y-F(x, y)W$ and $\tilde h:=\sqrt{h(\xi, \xi)}$. Observe that
   \beq  h^2-2FW_0=\lambda F^2 \label{hWF}\eeq from (\ref{F-hW}). Thus
\beq \tilde h^2=h(\xi, \xi)=h(y-FW, y-FW)=h^2-2FW_0+F^2\|W\|_h^2=F^2,\label{tilde-hF}\eeq namely, $\tilde h(x, \xi)=F(x, y)$. For the sake of convenience, we use `` ${\widetilde{(\cdot)}_0}$ " to denote the contraction with $\xi$, for example, $\widetilde W_0:=W_i\xi^i$, $\widetilde V_0: =V_i\xi^i$, $\widetilde V_{0:0}:=V_{j:k}\xi^j\xi^k$, $\tilde f_0=f_i\xi^i$, $\widetilde f_{;00}=f_{:ij}\xi^i\xi^j$  etc..
\begin{lem} \label{lem52} Let $V=(V^i)$ be a smooth vector field on $M$ and $\hat V$ be its complete lift on $TM$. Then
\beq {\mathcal L}_{\hat V}(\tilde h^2)=\frac 2{\tilde h+\widetilde W_0}\left\{\tilde h \widetilde V_{0:0}+\tilde h^2 \left(V_{j:k}W^k-W_{j:k}V^k\right)\xi^j\right\}\label{LVh2}\eeq for any $\xi\neq 0$.
\end{lem}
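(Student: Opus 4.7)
Since $\tilde h^2(x,y)=h_{ij}(x)\xi^i\xi^j$ is a smooth function on $TM_0$ and $\xi^j=y^j-FW^j$, my plan is to apply the Lie derivative $\mathcal L_{\hat V}=\hat V$ directly using the complete-lift formula (\ref{hat-V}), taking care that $\xi$ depends on $y$ through $F$. From $\mathcal L_{\hat V}(y^k)=y^j V^k_{,j}$ (the defining property of $\hat V$) and $\mathcal L_{\hat V}(W^j)=V^kW^j_{,k}$, the product rule gives
$$
\mathcal L_{\hat V}(\xi^j)=y^kV^j_{,k}-W^j\mathcal L_{\hat V}(F)-FV^kW^j_{,k},
$$
and hence
$$
\mathcal L_{\hat V}(\tilde h^2)=V^k h_{ij,k}\xi^i\xi^j+2h_{ij}\xi^i\bigl[y^kV^j_{,k}-W^j\mathcal L_{\hat V}(F)-FV^kW^j_{,k}\bigr].
$$

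The next step is to repackage the partial-derivative terms into covariant derivatives of $h$. I would split $y^k=\xi^k+FW^k$ inside the first bracket. A direct use of $\Gamma^l_{jk}$ of $h$ gives the identity
$$
V_{j:k}+V_{k:j}=h_{jl}V^l_{,k}+h_{kl}V^l_{,j}+h_{jk,m}V^m,
$$
which, contracted with $\xi^j\xi^k$, yields $V^kh_{ij,k}\xi^i\xi^j+2h_{ij}\xi^i\xi^kV^j_{,k}=2\widetilde V_{0:0}$. Moreover, since the Levi-Civita connection of $h$ is torsion-free and $h$-parallel, $h_{ij}\xi^i(W^kV^j_{,k}-V^kW^j_{,k})=(V_{j:k}W^k-W_{j:k}V^k)\xi^j$ (the partial-derivative Lie bracket coincides with the covariant one, and $h_{ij}V^j_{:k}=V_{i:k}$). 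Using also $h_{ij}\xi^iW^j=\widetilde W_0$, all these substitutions collapse the previous display to the intermediate identity
$$
\mathcal L_{\hat V}(\tilde h^2)=2\widetilde V_{0:0}+2F(V_{j:k}W^k-W_{j:k}V^k)\xi^j-2\widetilde W_0\,\mathcal L_{\hat V}(F).
$$

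Finally, the identity $\tilde h=F$ established in (\ref{tilde-hF}) implies $\mathcal L_{\hat V}(F)=\mathcal L_{\hat V}(\tilde h^2)/(2\tilde h)$, so $\mathcal L_{\hat V}(\tilde h^2)$ appears on both sides of the previous display. Solving this linear equation produces the factor $(\tilde h+\widetilde W_0)/\tilde h$ on the left-hand side and delivers (\ref{LVh2}) at once. The principal obstacle is the self-referential role of $\mathcal L_{\hat V}(F)$ forced by the $y$-dependence of $\xi$, together with the need to recognize the Christoffel correction that turns $V^kh_{ij,k}\xi^i\xi^j+2h_{ij}\xi^i\xi^kV^j_{,k}$ into $2\widetilde V_{0:0}$; once these are in place the final algebraic solve is routine.
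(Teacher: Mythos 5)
Your argument is essentially identical to the paper's proof: the same computation of $\hat V(\xi^i)$ with the $-W^i\mathcal L_{\hat V}(F)$ term, the same conversion of partial derivatives to covariant ones via the Christoffel symbols of $h$ after splitting $y^k=\xi^k+FW^k$, the same intermediate identity ${\mathcal L}_{\hat V}(\tilde h^2)=2\widetilde V_{0:0}+2F(V_{j:k}W^k-W_{j:k}V^k)\xi^j-2\widetilde W_0{\mathcal L}_{\hat V}(F)$, and the same final linear solve using ${\mathcal L}_{\hat V}(F)=(2\tilde h)^{-1}{\mathcal L}_{\hat V}(\tilde h^2)$. The only step you omit is the justification that $\tilde h+\widetilde W_0\neq 0$ for $\xi\neq 0$ (needed before dividing); the paper proves this by noting that $\tilde h^2=\widetilde W_0^2$ would force $h_{ij}=W_iW_j$, contradicting positive definiteness, and it also follows at once from Cauchy--Schwarz and $\|W\|_h<1$.
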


\begin{proof} By a direct calculation, we have
\beq {\mathcal L}_{\hat V}(\tilde h^2)&=&\hat V(h_{ij}\xi^i\xi^j)=\hat V(h_{ij})\xi^i\xi^j+2h_{ij}\hat V(\xi^i)\xi^j \nonumber \\
&=& V^k\frac{\partial h_{ij}}{\partial x^k}\xi^i\xi^j+2h_{ij}\left(y^k\frac{\partial V^i}{\partial x^k}-{\mathcal L}_{\hat V}(F) W^i-FV^k\frac{\partial W^i}{\partial x^k}\right)\xi^j.\label{LV-h} \eeq Denote by ${}^h\Gamma_{ij}^l$ the Riemannian connection coefficients of $h$. Then
$$\frac{\partial h_{ij}}{\partial x^k}=h_{il}{}^h\Gamma_{jk}^l+h_{jl}{}^h\Gamma_{ik}^l.$$ Plugging this and $y^k=\xi^k+FW^k$ in (\ref{LV-h}) yields
\beqn {\mathcal L}_{\hat V}(\tilde h^2)&=& 2h_{ij}V^i_{\ :k}\xi^j\xi^k+2Fh_{ij}\frac{\partial V^i}{\partial x^k}W^k\xi^j-2\widetilde W_0{\mathcal L}_{\hat V}(F)-2Fh_{ij}\frac{\partial W^i}{\partial x^k}\xi^j V^k\nonumber \\
&=& 2\widetilde V_{0:0}+2F\left(V_{j:k}W^k-W_{j:k}V^k\right)\xi^j-2\widetilde W_0{\mathcal L}_{\hat V}(F).\label{LV-h-1} \eeqn
Note that $F(x, y)=\tilde h(x, \xi)$ and ${\mathcal L}_{\hat V}(F)=(2\tilde h)^{-1}{\mathcal L}_{\hat V}(\tilde h^2)$. Then we have
\beq \left(\tilde h+\widetilde W_0\right){\mathcal L}_{\hat V}(\tilde h^2)=2\tilde h \widetilde V_{0:0}+2\tilde h^2 \left(V_{j:k}W^k-W_{j:k}V^k\right)\xi^j.\label{LV-h-1} \eeq Note that $\tilde h+\widetilde W_0\neq 0$ for any $\xi\neq 0$. Otherwise, we have $\tilde h^2=\widetilde W_0^2$. Differentiating this equation twice in $\xi$ yields $h_{ij}=W_iW_j$, which is impossible since the matrix $(h_{ij})$ is positive definite. Thus (\ref{LVh2}) follows from (\ref{LV-h-1}).
%(\ref{LVh2*}) directly follows from (\ref{LVh2}) and Lemma \ref{lem51}.
\end{proof}

\begin{lem}\label{lem53}(\cite{CS1}) Let $F$ be a Randers metric expressed by (\ref{F-hW}). Suppose that $F$ has isotropic S$_{BH}$-curvature $\sigma$. Then,  for any scalar function $\tilde\mu$ on $M$,
\beq {\rm{Ric}}-(n-1)\left(\frac{3\sigma_0}{F}+\tilde\mu-\sigma^2-2\sigma_{i}W^i\right)F^2=\widetilde{\rm{Ric}}-(n-1)\tilde\mu \tilde h^2,\label{Ric-Ric}\eeq where $\widetilde{\rm{Ric}}={}^h{\rm Ric}_{ij}\xi^i\xi^j$.
\end{lem}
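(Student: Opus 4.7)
The plan is to reduce the claimed identity to a purely navigation-theoretic formula and then invoke (or rederive) the Chen--Shen transformation law for Randers Ricci curvature under isotropic S-curvature.

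First, I observe that by (\ref{tilde-hF}) one has $\tilde h^{2}=F^{2}$, so the $\tilde\mu$ terms on both sides of the claimed identity cancel identically. The free parameter $\tilde\mu$ carries no geometric content; it is inserted only to match a Ricci-type hypothesis for $h$ in a later application. Consequently the lemma is equivalent to its $\tilde\mu=0$ version
\begin{equation*}
\mathrm{Ric}-(n-1)\left(\frac{3\sigma_{0}}{F}-\sigma^{2}-2\sigma_{i}W^{i}\right)F^{2}=\widetilde{\mathrm{Ric}},
\end{equation*}
which is the actual content to be verified.

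My preferred route is the geodesic-spray/navigation approach. Because $(h,W)$ is the navigation data of $F$, the geodesics of $F$ are obtained from those of $h$ by translation with the flow of $W$, so the spray coefficients obey a relation of the form
\begin{equation*}
G^{i}(x,y)=\tilde G^{i}(x,\xi)+P(x,y)\,y^{i}+Q^{i}(x,y),\qquad \xi=y-F(x,y)W_{x},
\end{equation*}
with $P$ and $Q^{i}$ explicit in $W$, $W_{:j}$ and $\mathcal S^{i}{}_{j}$ (cf.\ (\ref{RS})). Differentiating this identity in the pattern (\ref{Rik1}) and contracting on $i$ yields a formula for $\mathrm{Ric}$ in terms of $\widetilde{\mathrm{Ric}}(x,\xi)$ plus correction terms that are polynomials in $W$, its $h$-covariant derivatives, and $F$. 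Now Lemma \ref{lem51} translates the isotropic S$_{BH}$-curvature hypothesis into the conformality condition $W_{i:j}+W_{j:i}=-4\sigma h_{ij}$, which pins down the symmetric part of $W_{i:j}$ and forces the symmetric block of the correction to collapse. What survives after this collapse is precisely $(n-1)\bigl(3\sigma_{0}/F-\sigma^{2}-2\sigma_{i}W^{i}\bigr)F^{2}$, giving the desired relation. This is exactly the computation carried out in \cite{CS1}, which I would invoke.

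As a purely algebraic alternative one can avoid sprays entirely: start from the Randers Ricci formula (\ref{Ric-F})--(\ref{Xi}), substitute $a_{ij}, b_i$ by (\ref{ab})--(\ref{ab-hw}), use Lemma \ref{lem43} with this specific $\sigma$ to rewrite $r_{ij}$, $r_{i}$, $s_{ij}$, $s_{i}$, $t_{ij}$ and $q_{00}$ in $(h,W,\sigma,\sigma_{i})$-data, and collect terms according to their parity under $y\mapsto -y$ (equivalently, by powers of the irrational $\alpha$). After substantial cancellations driven by the conformal condition on $W$ one should recognise $\widetilde{\mathrm{Ric}}={}^{h}\mathrm{Ric}_{ij}\xi^{i}\xi^{j}$ on the right.

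The main obstacle is the bookkeeping: one must simultaneously track the three competing connections (Chern of $F$, Levi-Civita of $\alpha$, Levi-Civita of $h$) and the change of variables $y\leftrightarrow \xi$, and verify that every second-derivative of $W$ that is not controlled by the conformality relation cancels out, so that the final correction involves only first derivatives of $\sigma$. Recognising this cancellation is the heart of the argument; once it is in hand, the lemma follows by direct identification of the surviving terms.
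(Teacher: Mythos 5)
Your proposal is correct and follows essentially the same route as the paper: the paper gives no independent proof of Lemma \ref{lem53} but simply cites \cite{CS1}, and you likewise reduce the claim to the Cheng--Shen identity and then invoke that reference for the actual computation. Your preliminary observation that the $\tilde\mu$ terms cancel because $\tilde h^2=F^2$ by (\ref{tilde-hF}) is correct and harmless, so nothing further is needed.
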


Note that $F(x, y)=\tilde h(x, \xi)$ and we use $\tilde\kappa$ instead of $(n-1)\tilde\mu$.  (\ref{Ric-Ric}) is equivalent to
\beq & & 2{\rm{Ric}}+\mathcal L_{\hat V}(F^2)-6(n-1)\sigma_0F-2\Big\{\tilde \kappa-(n-1)\sigma^2-2(n-1)\sigma_{i}W^i\Big\}F^2\nonumber \\
& & =2\widetilde{\rm{Ric}}+ \mathcal L_{\hat V}(\tilde h^2)-2\tilde\kappa \tilde h^2 \label{Ric-Ric-LL}\eeq for any scalar function $\tilde\kappa$ on $M$. Based on this and Lemmas \ref{lem51}-\ref{lem52}, we prove Theorem \ref{thm13*}.

 \begin{proof}[Proof of Theorem~{\upshape\ref{thm13*}}]  Assume that $(M, F, V)$ is an almost Ricci soliton with soliton scalar $\kappa$. Since $F$ is non-Riemannian and $n\geq 2$, $S_{BH}=(n+1)\sigma F$ for some function $\sigma$ on $M$ by Corollary \ref{cor11}. Hence $W$ is a conformal vector field of $h$ with conformal factor $-\sigma$ by Lemma \ref{lem51}.  Let $\tilde\kappa:=\kappa+(n-1)\sigma^2+2(n-1)\sigma_iW^i$. By (\ref{Ric-Ric-LL}) and (\ref{RS-kappa}),  we have
\beq-6(n-1)\sigma_0F=2\widetilde{\rm{Ric}}+ \mathcal L_{\hat V}(\tilde h^2)-2\tilde\kappa \tilde h^2.\label{Ric-Ric-LL*}\eeq For any $\xi\neq 0$, plugging (\ref{LVh2}) in (\ref{Ric-Ric-LL*}) and using $\sigma_0=\sigma_i\xi^i+\tilde h\sigma_iW^i$ yield
\beq -3(n-1)(\tilde h^2+\tilde h\widetilde W_0)(\sigma_i\xi^i+\tilde h\sigma_iW^i)&=&(\tilde h+\widetilde W_0) {}^h{\rm Ric}_{ij}\xi^i\xi^j+\tilde h\widetilde V_{0:0} \nonumber \\
&  &+\tilde h^2\left(V_{j:k}W^k-W_{j:k}V^k\right)\xi^j-\tilde\kappa\tilde h^2(\tilde h+\widetilde W_0). \label{hW-xi} \eeq This equation holds obviously if $\xi=0$. Consequently, (\ref{hW-xi}) holds for any $(x, \xi)\in TM$ and it can be rewritten as the following form:
$$P+Q\tilde h=0,$$ where $P, Q$ are polynomials in $\xi$. From this, we get $P=Q=0$ since $\tilde h=\sqrt{h_{ij}(x)\xi^i\xi^j}$ is an irrational function in $\xi$. Based on this, (\ref{hW-xi}) is equivalent to the following two equations:
\beq & -3(n-1)\left\{(\sigma_iW^i)\tilde h^2+\tilde\sigma_0\widetilde W_0\right\} ={}^h{\rm Ric}_{ij}\xi^i\xi^j+\widetilde V_{0:0}-\tilde\kappa\tilde h^2, \label{h-Ric-V}\\
& -3(n-1)\left\{\tilde\sigma_0+(\sigma_iW^i)\widetilde W_0\right\}\tilde h^2= \widetilde W_0 {}^h{\rm Ric}_{ij}\xi^i\xi^j+\tilde h^2\left(V_{j:k}W^k-W_{j:k}V^k\right)\xi^j-\tilde\kappa \widetilde W_0\tilde h^2, \label{h-Ric-V*} \eeq where $\tilde\sigma_0:=\sigma_i\xi^i$.
Since $F$ is non-Riemannian, i.e., $W\neq 0$, (\ref{h-Ric-V*}) implies that ${}^h{\rm Ric}_{ij}\xi^i\xi^j$ is divided by $\tilde h^2$. There is a function $\mu$ on $M$ such that ${}^h{\rm Ric}_{ij}\xi^i\xi^j=\mu \tilde h^2$, namely, ${}^h{\rm Ric}_{ij}=\mu h_{ij}$, which means that $h$ is Einstein.  Differentiating (\ref{h-Ric-V})-(\ref{h-Ric-V*}) in $\xi$ leads to
 \beq & &V_{i:j}+V_{j:i}=2(\tilde\kappa-\mu) h_{ij}-3(n-1)\left\{2(\sigma_kW^k)h_{ij}+\sigma_iW_j+\sigma_jW_i\right\},  \label{VV-h}\\
 & &V_{j:k}W^k-W_{j:k}V^k= (\tilde \kappa-\mu) W_j-3(n-1) \left(\sigma_j+(\sigma_kW^k)W_j\right). \label{VW-WV}\eeq From  (\ref{VV-h})-(\ref{VW-WV}), one obtains
 \beq W_{j:k}V^k+V_{k:j}W^k=\left(\tilde\kappa-\mu\right)W_j-3(n-1)\left\{2(\sigma_iW^i) W_j-\lambda\sigma_j\right\}.\label{VW-h}\eeq
It follows from (\ref{L-hW}) that (\ref{VV-h}) and (\ref{VW-h}) are equivalent to
\beqn \mathcal L_{\hat V}(h^2)&=&2(\tilde\kappa-\mu)h^2-6(n-1)\left\{(\sigma_iW^i)h^2+\sigma_0W_0\right\}, \\
 \mathcal L_{\hat V}(W_0)&=&(\tilde\kappa-\mu)W_0-3(n-1)\left\{2(\sigma_iW^i)W_0-\lambda\sigma_0\right\},\eeqn which imply (\ref{LV-h2})-(\ref{LVW0}) since $\tilde\kappa-\mu=\kappa-\mu+(n-1)\sigma^2+2(n-1)\sigma_iW^i=c$.

 Conversely, assume that  ${}^h{\rm Ric}_{ij}=\mu h_{ij}$, $W$ is a conformal vector field of $h$ with conformal factor $\sigma$ and (\ref{LV-h2})-(\ref{LVW0}) hold. Then $S_{BH}=(n+1)\sigma F$ and $\widetilde{\rm Ric}=\mu\tilde h^2$. Differentiating (\ref{LV-h2})-(\ref{LVW0}) twice in $y$ gives (\ref{VV-h}) and (\ref{VW-h}), where $\tilde\kappa: =c+\mu=\kappa+(n-1)\sigma^2+2(n-1)\sigma_iW^i$. From these, we have
 \beqn & \widetilde V_{0:0}=(\tilde\kappa -\mu)\tilde h^2-3(n-1)\left(\tilde \sigma_0\widetilde W_0+(\sigma_iW^i)\tilde h^2\right), \nonumber \\
 & \left(V_{j:k}W^k-W_{j:k}V^k\right)\xi^j=(\tilde\kappa -\mu)\widetilde W_0-3(n-1)\left(\tilde \sigma_0+(\sigma_iW^i)\widetilde W_0\right). \eeqn
Inserting these in (\ref{LVh2}) yields
 \beqn {\mathcal L}_{\hat V}(\tilde h^2)=2(\tilde \kappa-\mu)\tilde h^2-6(n-1)\left(\tilde h\tilde \sigma_0+(\sigma_iW^i)\tilde h^2\right).\eeqn Consequently,
 \beqn 2\widetilde{\rm{Ric}}+ \mathcal L_{\hat V}(\tilde h^2)&=&2\tilde\kappa \tilde h^2 - 6(n-1)F\sigma_i\left(\xi^i+FW^i\right)\\
 &=& 2\tilde\kappa \tilde h^2-6(n-1)\sigma_0F, \eeqn which implies that $2{\rm Ric}+\mathcal L_{\hat V}(F^2)=2\kappa F^2$ by (\ref{Ric-Ric-LL}), i.e., $(M, F, V)$ is an almost Ricci soliton with soliton scalar $\kappa$.  \end{proof}

 \begin{proof}[Proof of Corollary~{\upshape\ref{cor13*}}] The first claim follows directly from Theorem \ref{thm13*}. Now we assume that $\mathcal L_{\hat V}(h^2)=2ch^2$. Then $(\sigma_iW^i)h^2+\sigma_0W_0=0$ by (\ref{LV-h2}), which implies that $\sigma$ is constant since $h^2$ is an irreducible polynomial in $y$. From (\ref{LVW0}), we have $\mathcal L_{\hat V}(W_0)=c W_0$.

If $\mathcal L_{\hat V}(W_0)=c W_0$ with $\|W\|_h^2\neq \frac 13$,  then \beq 2(\sigma_iW^i)W_0-\lambda\sigma_0=0 \label{cor-sw} \eeq by (\ref{LVW0}). Differentiating this in $y^k$ and then contracting with $W^k$ yields $(3\|W\|_h^2-1)(\sigma_kW^k)=0$ and hence $\sigma_kW^k=0$. Inserting this back into (\ref{cor-sw}) gives $\sigma_0=0$, i.e, $\sigma$=constant, which means that $\mathcal L_{\hat V}(h^2)=2ch^2$ by (\ref{LV-h2}).

In any case, we have $\mathcal L_{\hat V}(h^2)=2ch^2$ and $\mathcal L_{\hat V}(W_0)=c W_0$. This means that $V$ is a conformal vector field of $F$ with conformal factor $\frac 12 c$ by  Proposition \ref{prop22}. Since $F$ has isotropic S$_{BH}$-curvature, by Theorem 1.1 in \cite{HM} (also Theorem 1.2, \cite{SX}),  $V$ is a homothetic field of $F$ with dilation $\frac 12 c$. The proof is finished.
\end{proof}

\section{Gradient almost Ricci solitons on Randers measure spaces} \label{sec6}

Let $(M, F, m)$ be an $n$-dimensional Randers measure space equipped with a Randers metric $F=\alpha+\beta$ and a smooth measure $m$. As mentioned in the introduction, the volume form determined by $m$ may be written as $dm=e^{-f}dm_{BH}$ and $f$ is the weighted function of $m$.  Theorem \ref{thm11} show that $(M, F, m)$ is a gradient almost Ricci soliton with soliton scalar $\kappa$ if and only if Ric$_\infty=\kappa$, i.e.,
\beq {\rm Ric}(x, y)+\dot S(x, y)=\kappa F^2(x, y), \ \ \ \ (x, y)\in TM. \label{GRS}\eeq

If $(M, F, m)$ is a gradient Ricci soliton (i.e., $\kappa$ is constant), Mo-Zhu-Zhu proved that $F$ must be of isotropic S$_{BH}$-curvature (Proposition 3.1, \cite{MZZ}). From the proof of Proposition 3.1 in \cite{MZZ}, it is easy to see that we need not assume that $F$ is non-Riemannian and $n\geq 2$ as in Theorems \ref{thm12} and \ref{thm13*}. For the gradient almost Ricci soliton $(M, F, m)$, this conclusion still holds by following the proof of Proposition 3.1 in \cite{MZZ}. Thus we have
\begin{prop}\label{prop61} Let $F=\alpha+\beta$ be a Randers metric on an $n$-dimensional measure space $(M, m)$ and $f$ be the weighted function of $m$. If $(M, F, m)$ is a gradient almost Ricci soliton, then $F$ is of isotropic S$_{BH}$-curvature. \end{prop}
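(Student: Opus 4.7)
The plan is to mimic the strategy of Mo--Zhu--Zhu's Proposition 3.1 in \cite{MZZ}, but applied to the almost-soliton setting where $\kappa=\kappa(x)$. Since the soliton equation enters only through the scalar function $\kappa F^2$, no derivative of $\kappa$ will be taken and the argument in \cite{MZZ} should go through verbatim; I would present it in the form of a polynomial rationality argument in $y$, where the main structural goal is to show that the non-polynomial (i.e.\ $\alpha$-odd) part of the equation $\mathrm{Ric}+\dot S=\kappa F^2$ forces $e_{00}$ to be divisible by $\alpha^2-\beta^2$.

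First I would invoke Theorem \ref{thm11} to rewrite the gradient almost Ricci soliton condition as $\mathrm{Ric}+\dot S=\kappa F^2$ on $TM$, with $dm=e^{-f}dm_{BH}$. Next I would substitute the Randers Ricci formula of Lemma \ref{lem42},
\[
\mathrm{Ric}={}^{\alpha}\mathrm{Ric}+2\alpha s^{i}_{\ 0;i}-2t_{00}-\alpha^{2}t^{i}_{\ i}+(n-1)\Xi,
\]
with $\Xi$ as in (\ref{Xi}), and compute $\dot S$ explicitly for the measure $e^{-f}\,dm_{BH}$ on a Randers manifold. Recall that $S_{BH}=(n+1)\bigl\{\tfrac{e_{00}}{2F}-(s_{0}+\text{lower-order terms in }\beta,b)\bigr\}$, so that $S=S_{BH}+f_{0}$; differentiating along a geodesic of $F$ yields an explicit expression for $\dot S$ as a rational function in $y$ whose denominator is a power of $F=\alpha+\beta$.

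With both $\mathrm{Ric}$ and $\dot S$ written out, I would multiply the identity $\mathrm{Ric}+\dot S=\kappa F^{2}$ by an appropriate power of $F$ (or equivalently $4\alpha F^{2}$, as in the proof of Theorem \ref{thm41}) to produce a polynomial identity in $y$ of the form
\[
B_{0}+B_{2}\alpha^{2}+B_{4}\alpha^{4}+\alpha\bigl(B_{1}+B_{3}\alpha^{2}+B_{5}\alpha^{4}\bigr)=0,
\]
where each $B_{i}$ is a polynomial in $y$ whose coefficients depend on ${}^{\alpha}\mathrm{Ric}$, $r_{ij}$, $s_{ij}$, $t_{ij}$, $\kappa$, $f$, and derivatives of $f$. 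Because $\alpha$ is irrational in $y$ while every $B_{i}$ is polynomial, this splits into two separate polynomial identities $\sum B_{\text{even}}\alpha^{2k}=0$ and $\sum B_{\text{odd}}\alpha^{2k}=0$. Taking a suitable linear combination analogous to $(\text{odd})\times\alpha^{2}-(\text{even})\times\beta$ in the proof of Theorem \ref{thm41}, I expect $(\alpha^{2}-\beta^{2})$ to appear as a common factor multiplying a polynomial containing the term $(r_{00}+2s_{0}\beta)^{2}=e_{00}^{2}$, so that $e_{00}$ must itself be divisible by the irreducible factor $\alpha^{2}-\beta^{2}$; hence $e_{00}=2\sigma(\alpha^{2}-\beta^{2})$ for some $\sigma=\sigma(x)$, which by Lemma \ref{lem-41} is exactly $S_{BH}=(n+1)\sigma F$.

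The main obstacle is bookkeeping: computing $\dot S$ cleanly for the weighted Busemann--Hausdorff measure on a Randers space, and then tracking the $e_{00}$-terms through the multiplication by $4\alpha F^{2}$ in order to see that they indeed assemble into $(r_{00}+2s_{0}\beta)^{2}$ after the even/odd split. Once this structural identity is in place, the irreducibility of $\alpha^{2}-\beta^{2}$ in the polynomial ring $\mathbb{R}[y^{1},\dots,y^{n}]$ (valid since $b=\|\beta\|_{\alpha}<1$) finishes the argument, and crucially no additional assumption on $\kappa$ (constancy, sign, or non-vanishing) or on $F$ (non-Riemannian, $n\ge 2$) is needed, which is why the conclusion extends from \cite{MZZ}'s Ricci soliton setting to the general almost Ricci soliton setting.
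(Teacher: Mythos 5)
Your proposal is correct and follows essentially the same route as the paper, which itself proves this proposition by deferring to the rationality argument of Mo--Zhu--Zhu's Proposition 3.1 and observing that nothing in that argument differentiates $\kappa$, so non-constant $\kappa$ (and likewise Riemannian $F$ or small $n$) causes no difficulty. Your sketch of the underlying mechanism --- expanding $\mathrm{Ric}+\dot S=\kappa F^2$ via Lemma \ref{lem42} and $S=S_{BH}+f_0$, splitting by the irrationality of $\alpha$ in $y$, and extracting divisibility of $e_{00}$ by the irreducible factor $\alpha^2-\beta^2$ --- is exactly the structure used in the paper's proof of Theorem \ref{thm41} and in the cited source.
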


Assume that $(M, F, m)$ is a gradient almost Ricci soliton. Then $S_{BH}=(n+1)\sigma F$ for some function $\sigma$ on $M$,  equivalently,  $e_{00}=2\sigma(x)(\alpha^2-\beta^2)$. In this case,  we have (\ref{Xi-formula}). From this and (\ref{Ric-F}), one obtains
\beq {\rm{Ric}}&=&{}^{\alpha}{\rm Ric}+2\alpha s^i_{\ 0;i}-2t_{00}-\alpha^2 t^i_{\ i}\nonumber \\
& & +(n-1)\Big\{s_0^2-2t_0\alpha-4\sigma s_0\alpha-\sigma_0(\alpha-\beta)+s_{0;0}+\sigma^2(3\alpha^2-2\alpha\beta-\beta^2)\Big\}.\label{G-Ric-F} \eeq
On the other hand, the geodesic coefficients $G^i$ of $F$ are given by (see (2.19) in \cite{CS2})
$$G^i=G_\alpha^i+Py^i+Q^i, $$ where $P:=\frac{e_{00}}{2F}-s_0$ and $Q^i:=\alpha s^i_{\ 0}$. Note that $dm=e^{-f}dm_{BH}$. By (\ref{S-curv}),  we have $S=S_{BH}+f_0$ and
\beq \dot S=S_{;0}-2(Py^k+Q^k)S_{y^k}=(S_{BH})_{;0}+f_{;00}-2(Py^k+Q^k)S_{y^k}.\label{S-dot}\eeq
Since $S_{BH}=(n+1)\sigma F$,  by Lemma \ref{lem43}, we get
\beqn & & (S_{BH})_{;0}=(n+1)(\sigma_0 F+\sigma F_{;0})=(n+1)\left\{\sigma_0F+2\sigma^2(\alpha^2-\beta^2)-2\sigma s_0\beta\right\},\\
& & (Py^k+Q^k)S_{y^k}=(n+1)\Big\{\sigma^2(\alpha^2-\beta^2)-\sigma s_0\beta\Big\}+\sigma (\alpha-\beta)f_0-s_0f_0+(f_ks^k_{\ 0})\alpha, \eeqn
where we used $F_{;0}=\beta_{;0}=r_{00}$. Inserting these into (\ref{S-dot}) gives
\beq \dot S=(n+1)\sigma_0F-2\sigma(\alpha-\beta)f_0+2s_0f_0-2(f_ks^k_{\ 0})\alpha+f_{;00}.\label{S-dot-ab}\eeq
It follows from (\ref{GRS}), (\ref{G-Ric-F}) and (\ref{S-dot-ab}) that
\beq \kappa (\alpha^2+2\alpha\beta+\beta^2)&=&{}^{\alpha}{\rm Ric}+2\alpha s^i_{\ 0;i}-2t_{00}-\alpha^2 t^i_{\ i}+2\sigma_0\alpha+2n\sigma_0\beta\nonumber \\
& &+(n-1)\Big\{s_0^2+s_{0;0}-2t_0\alpha-4\sigma s_0\alpha+\sigma^2(3\alpha^2-2\alpha\beta-\beta^2)\Big\}\nonumber \\
& & -2\sigma(\alpha-\beta)f_0+2s_0f_0-2(f_ks^k_{\ 0})\alpha+f_{;00}.\label{abf-kappa} \eeq
 By the same arguments as in (\ref{A1-5-a*})-(\ref{A-135}) or (\ref{hW-xi})-(\ref{h-Ric-V*}), (\ref{abf-kappa}) is rewritten as $A+B\alpha=0$, which means that $A=B=0$ since $\alpha$ is irrational in $y$, where $A$ and $B$ are polynomials in $y$. Based on this,  one obtains from (\ref{abf-kappa}) that
\beq s^i_{\ 0;i}&=&\kappa \beta-\sigma_0+(n-1)(t_0+2\sigma s_0+\sigma^2\beta)+\sigma f_0+f_ks^k_{\ 0},\label{G-si0i}\\
{}^{\alpha}{\rm Ric}&=&\kappa(\alpha^2+\beta^2)+2t_{00}+t^i_{\ i}\alpha^2-2n\sigma_0\beta\nonumber \\
 & &-(n-1)\left(s_0^2+s_{0;0}+3\sigma^2\alpha^2-\sigma^2\beta^2\right)-2(s_0+\sigma\beta)f_0-f_{;00}.\label{G-Ric-a}\eeq

Conversely, assume that  $e_{00}=2\sigma(x)(\alpha^2-\beta^2)$. In the same way as above, we have (\ref{Xi-formula}) and (\ref{G-Ric-F})-(\ref{S-dot-ab}). Moreover, assume that (\ref{G-si0i})-(\ref{G-Ric-a}) hold. Plugging these in (\ref{G-Ric-F}) yields
$${\rm Ric}=\kappa(\alpha+\beta)^2-(n+1)\sigma_0F-2s_0f_0+2\sigma f_0(\alpha-\beta)+2\alpha f_k\mathcal S^k_{\ 0}-f_{:00}.$$
  From this and (\ref{S-dot-ab}), one obtains (\ref{GRS}), that is, $(M, F, m)$ is a gradient Ricci soliton. This proves the following
\begin{thm}\label{thm61} Let $F=\alpha+\beta$ be a Randers metric on an $n$-dimensional measure space $(M, m)$ and $f$ be the weighted function of $m$. Then $(M, F, m)$ is a gradient almost Ricci soliton with soliton scalar $\kappa$ if and only if there is a function $\sigma$ on $M$ such that $e_{00}=2\sigma(\alpha^2-\beta^2)$ and (\ref{G-si0i})-(\ref{G-Ric-a}) holds. \end{thm}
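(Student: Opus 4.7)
The plan is to reduce the soliton condition to an identity between polynomials in $y$ and a multiple of $\alpha$, then separate rational and irrational parts.

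First, by Theorem~\ref{thm11}, $(M,F,m)$ is a gradient almost Ricci soliton with soliton scalar $\kappa$ if and only if $\mathrm{Ric}_\infty=\kappa$, namely
\[
\mathrm{Ric}(x,y)+\dot S(x,y)=\kappa F^2(x,y)\quad\text{for all }(x,y)\in TM.
\]
For the necessary direction, Proposition~\ref{prop61} already forces $F$ to be of isotropic $S_{BH}$-curvature, so there exists $\sigma=\sigma(x)$ with $e_{00}=2\sigma(\alpha^2-\beta^2)$. Thus the only remaining task is to show that, under this isotropy condition, the identity $\mathrm{Ric}+\dot S=\kappa F^2$ is equivalent to the pair (\ref{G-si0i})--(\ref{G-Ric-a}).

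The next step is to make each side of $\mathrm{Ric}+\dot S=\kappa F^2$ explicit. For the Ricci side, I would plug $e_{00}=2\sigma(\alpha^2-\beta^2)$ and Lemma~\ref{lem43} into the formula $\Xi$ of Lemma~\ref{lem42}, obtaining the closed expression already recorded in (\ref{Xi-formula}), and then use Lemma~\ref{lem42} to express $\mathrm{Ric}$ as (\ref{G-Ric-F}). For the $\dot S$ side, I would use $dm=e^{-f}dm_{BH}$, which gives $S=S_{BH}+f_0=(n+1)\sigma F+f_0$, and the standard decomposition of the Randers spray $G^i=G_\alpha^i+Py^i+Q^i$ with $P=e_{00}/(2F)-s_0$, $Q^i=\alpha s^i{}_{\,0}$. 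Via $\dot S=S_{;0}-2(Py^k+Q^k)S_{y^k}$, combined with $F_{;0}=\beta_{;0}=r_{00}=-2\beta s_0+2\sigma(\alpha^2-\beta^2)$ from Lemma~\ref{lem43}, this yields the expression (\ref{S-dot-ab}) for $\dot S$.

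Adding the two expressions and setting the result equal to $\kappa F^2=\kappa(\alpha^2+2\alpha\beta+\beta^2)$ produces (\ref{abf-kappa}). Here comes the key structural observation: every term in this identity is either a polynomial in $y$ or that polynomial times $\alpha$. Since $\alpha=\sqrt{a_{ij}y^iy^j}$ is irrational in $y$ while all coefficients are rational, the identity splits as $A+B\alpha=0$ with $A,B$ polynomial in $y$, forcing $A=0$ and $B=0$ separately. Collecting the coefficients of $\alpha$ gives exactly (\ref{G-si0i}), and collecting the coefficients free of $\alpha$ gives (\ref{G-Ric-a}). The converse direction is routine: assuming $e_{00}=2\sigma(\alpha^2-\beta^2)$ together with (\ref{G-si0i})--(\ref{G-Ric-a}), I reverse the substitutions to recover $\mathrm{Ric}+\dot S=\kappa F^2$ and invoke Theorem~\ref{thm11}.

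The main obstacle is purely bookkeeping: keeping track of the many contractions $s^i{}_{\,0;i}$, $t_{00}$, $t^i{}_{\,i}$, $t_0$, $s_0$, $\sigma_0$, $f_{;00}$, $f_k s^k{}_{\,0}$ through the substitution of $\Xi$ into $\mathrm{Ric}$ and into the formula for $\dot S$, and correctly identifying which pieces acquire a factor of $\alpha$ (and hence live in the ``$B$'' polynomial) versus which remain in the ``$A$'' polynomial. Once that separation is done cleanly, both implications drop out immediately from the irrationality of $\alpha$ in $y$ and the irreducibility of the polynomial $\alpha^2-\beta^2$ (which was already used in Proposition~\ref{prop61} to get the isotropy of $S_{BH}$).
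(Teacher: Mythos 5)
Your proposal is correct and follows essentially the same route as the paper: reduce via Theorem \ref{thm11} to ${\rm Ric}+\dot S=\kappa F^2$, invoke Proposition \ref{prop61} for the isotropic $S_{BH}$-curvature, compute ${\rm Ric}$ via (\ref{Xi-formula})--(\ref{G-Ric-F}) and $\dot S$ via the spray decomposition to get (\ref{S-dot-ab}), and split (\ref{abf-kappa}) as $A+B\alpha=0$ using the irrationality of $\alpha$ in $y$. The converse by reversing the substitutions is likewise exactly the paper's argument.
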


\begin{remark} \label{rm61} {\rm When $\kappa$ is constant, (\ref{G-si0i})-(\ref{G-Ric-a}) are equivalent to (1.4)-(1.5) in \cite{MZZ}. Note that the function $f$ in this paper is different from that in \cite{MZZ} (up to a constant factor $n+1$). Moreover, let $\rho=\ln\sqrt{1-b^2}-f$. By Lemma \ref{lem43}, we calculate $\rho_0$ and $\rho_{0;0}$. Inserting then these in (1.4)-(1.5) in \cite{MZZ} gives (\ref{G-si0i})-(\ref{G-Ric-a}) and vice versa. }\end{remark}

As an application of Theorem \ref{thm61}, we can prove Theorem \ref{thm14}. For this, we need some lemmas. Similar to Lemmas \ref{lem44}-\ref{lem47}, we have
\begin{lem} \label{lem61} Under (\ref{GRS-e00}) and (\ref{GRS-Ric-a}), we have
\beq s^i_{\ 0;i}&=&\kappa \beta+(n-1)(t_0+2\sigma s_0+\sigma^2\beta)+\sigma_0\left\{2(n-1)-(2n-1)b^2\right\}\nonumber \\ & &-\frac{(2n-1)(1-b^2)}{1+b^2}(\sigma_jb^j)\beta-f_{;0j}b^j-\sigma b^2f_0-(f_jb^j)s_0\nonumber \\ & &+\frac 1{1+b^2}(f_{;ij}b^ib^j)\beta-\frac{\sigma(1-b^2)}{1+b^2}(f_kb^k)\beta.\label{GRS-si0i*}\eeq
\end{lem}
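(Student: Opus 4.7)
The strategy mirrors the proof of Lemma \ref{lem44}, but with the new Ricci expression (\ref{GRS-Ric-a}) replacing (\ref{a-Ric}). The starting point is the identity (\ref{si0i**}), namely
\[
 s^i_{\ 0;i} = {}^{\alpha}{\rm Ric}_{0j}b^j + r^i_{\ i;0} - r^i_{\ 0;i},
\]
so the task reduces to computing each of the three pieces on the right under the hypotheses $e_{00}=2\sigma(\alpha^2-\beta^2)$ and (\ref{GRS-Ric-a}).

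First I would differentiate (\ref{GRS-Ric-a}) once in $y^j$, contract with $b^j$, and halve, giving ${}^{\alpha}{\rm Ric}_{0j}b^j$ as an explicit expression in $\kappa$, $\beta$, $t^i_{\ i}$, $t_0$, $\sigma^2$, $\sigma_0$, $\sigma_jb^j$, $s_0$, $s_{j;0}$, $s_{0;j}$, $f_0$, $f_jb^j$, $f_{;0j}b^j$ (using $s_jb^j=0$ and ${\rm Hess}_\alpha(f)(y)=f_{;ij}y^iy^j$ so that the Hessian contribution becomes $f_{;0j}b^j$). Then I would use (\ref{bs-1})--(\ref{bs-2}) from Lemma \ref{lem45} to eliminate $b^js_{j;0}$ and $b^js_{0;j}$, yielding a clean formula for ${}^{\alpha}{\rm Ric}_{0j}b^j$ analogous to (\ref{Ric-bj-1}) but with Hessian and first-derivative terms in $f$ in place of the $V$-Lie-derivative terms.

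Next I would differentiate this formula once more in $y^l$, contract with $b^l$, and halve, producing $b^jb^l({}^{\alpha}{\rm Ric}_{jl})$. Substituting this together with (\ref{rrrr}) into (\ref{sii}) gives $s^i_{\ ;i}$ (with the $V$-terms of Lemma \ref{lem44} replaced by $f$-terms), and plugging the result back into (\ref{ri0i}) yields $r^i_{\ 0;i}$. Finally I would combine with the closed form for $r^i_{\ i;0}$ from Lemma \ref{lem43}, substitute everything into (\ref{si0i**}), and collect terms. The denominators $1+b^2$ in the asserted formula will arise naturally when eliminating $s^i_{\ ;i}$ between the two occurrences just as in the proof of Lemma \ref{lem44}.

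The main obstacle is bookkeeping: the expression for ${}^{\alpha}{\rm Ric}$ here carries an extra Hessian term and two first-derivative-of-$f$ terms (via $2(s_0+\sigma\beta)f_0$), so after the two successive contractions with $b^j$ and $b^l$ one must carefully track which contractions produce $f_{;ij}b^ib^j$, which produce $f_{;0j}b^j$, and which produce factors of $f_jb^j$ and $f_0$, then use $s_jb^j=0$, $(b^2)_{;j}=2(r_j+s_j)$, and the reduction $r_j=-b^2 s_j+2\sigma(1-b^2)b_j$ from Lemma \ref{lem43} to collapse everything into the stated form. No new identity beyond those already assembled in Lemmas \ref{lem43} and \ref{lem45} should be needed.
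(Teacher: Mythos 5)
Your proposal follows exactly the paper's own route: the paper likewise starts from (\ref{si0i**}), computes ${}^{\alpha}{\rm Ric}_{0j}b^j$ and ${}^{\alpha}{\rm Ric}_{jl}b^jb^l$ by successive $y$-differentiation and contraction of (\ref{GRS-Ric-a}) (using Lemmas \ref{lem43} and \ref{lem45} to eliminate $b^js_{j;0}$ and $b^js_{0;j}$), feeds these together with (\ref{rrrr}) into (\ref{sii}) to solve for $s^i_{\ ;i}$ (whence the $1+b^2$ denominators), and then recovers $r^i_{\ 0;i}$ from (\ref{ri0i}) and combines with $r^i_{\ i;0}$ from Lemma \ref{lem43}. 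The argument is correct and is, as you anticipated, the direct analogue of the proof of Lemma \ref{lem44} with the $V$-terms replaced by the $f$-terms.
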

\begin{proof} The proof is similar to that of Lemma \ref{lem44}.  In fact, it follows from (\ref{GRS-Ric-a}) and Lemma \ref{lem43} that
\beq {}^{\alpha}{\rm Ric}_{0j}b^j&=&\kappa(1+b^2)\beta+(n+1)t_0-(2n-1)\sigma_0b^2-(\sigma_kb^k)\beta+t^i_{\ i}\beta\nonumber \\
 & &-(n-1)\left\{s_ks^k+\sigma^2(3-b^2)\right\}\beta-\left\{f_{;0j}b^j+(s_0+\sigma\beta)(f_jb^j)+\sigma b^2f_0\right\}, \label{G-Ric-bj}\\
  {}^{\alpha}{\rm Ric}_{jl}b^jb^l&=& \kappa b^2(1+b^2)-(n+1)s_js^j-2n(\sigma_jb^j)b^2+t^i_{\ i}b^2-(n-1)(s_js^j)b^2\nonumber \\
  & &-(n-1)\sigma^2b^2(3-b^2)-f_{;jl}b^jb^l-2\sigma b^2(f_jb^j).\label{G-Ric-bb}\eeq
By putting these and (\ref{rrrr}) in (\ref{sii}), we get
\beqn s^i_{\ ;i}&=&-\kappa b^2-t^j_{\ j}-(n-1)\left(\sigma^2b^2 +2\sigma_jb^j-s_js^j\right)\nonumber \\
& & +\frac{2(2n-1)b^2}{1+b^2}\sigma_jb^j+\frac 1{1+b^2}\left\{f_{;jl}b^jb^l+2\sigma b^2(f_jb^j)\right\}.\eeqn Inserting this in (\ref{ri0i}) gives
\beqn r^i_{\ 0;i}&=& \kappa b^2\beta+t^j_{\ j}\beta+(n-1)\left(\sigma^2b^2+2\sigma_jb^j-s_js^j\right)\beta +2t_0+2\sigma_0(1-b^2)\nonumber \\
  & & -2\sigma(n+1-2b^2)(s_0+2\sigma \beta)-\frac{2(2n-1)b^2}{1+b^2}(\sigma_jb^j)\beta-\frac 1{1+b^2}\left(f_{;jl}b^jb^l+2\sigma b^2(f_jb^j)\right)\beta. \eeqn
 Since $r^i_{\ i;0}=2\sigma_0(n-b^2)-4\sigma(1-b^2)(s_0+2\sigma\beta)$ by Lemma \ref{lem43}, we have
 \beqn r^i_{\ i;0}-r^i_{\ 0;i}&=& -2t_0-(\kappa b^2+t^i_{\ i})\beta +(n-1)\left\{4\sigma^2-\sigma^2b^2+s_is^i-2\sigma_ib^i\right\}\beta\nonumber \\
 & & +2(n-1)\left(\sigma_0+\sigma s_0\right)+\frac{2(2n-1)b^2}{1+b^2}(\sigma_ib^i)\beta +\frac{1}{1+b^2}\left(f_{;jl}b^jb^l+2\sigma b^2(f_jb^j)\right)\beta.\eeqn
Plugging this and (\ref{G-Ric-bj}) into (\ref{si0i**}) yields (\ref{GRS-si0i*}).
\end{proof}

\begin{lem}\label{lem62} Under (\ref{GRS-e00}) and (\ref{GRS-Ric-a}), the equations (\ref{sigma0-f}) and (\ref{G-si0i}) are equivalent. \end{lem}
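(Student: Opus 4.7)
The plan is to exploit Lemma \ref{lem61}: under the hypotheses (\ref{GRS-e00}) and (\ref{GRS-Ric-a}), formula (\ref{GRS-si0i*}) already provides an alternative expression for $s^i_{\ 0;i}$. Equating (\ref{G-si0i}) with (\ref{GRS-si0i*}) therefore yields a tensor identity that is equivalent to (\ref{G-si0i}) under our hypotheses, and the task becomes showing this identity is equivalent to (\ref{sigma0-f}).

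First, I would cancel the common $\kappa\beta$ and $(n-1)(t_0 + 2\sigma s_0 + \sigma^2\beta)$ terms, and combine the $\sigma_0$--coefficients: $-1$ from (\ref{G-si0i}) against $\{2(n-1)-(2n-1)b^2\}$ from (\ref{GRS-si0i*}), which sum to $(2n-1)(1-b^2)$. After rearranging, the resulting identity takes the shape
\[
(\star)\qquad (2n-1)(1-b^2)\sigma_0 \;=\; \sigma(1+b^2)f_0 + f_k s^k_{\ 0} + (f_jb^j)s_0 + f_{;0j}b^j + \frac{\beta}{1+b^2}\,\Phi,
\]
where $\Phi := (2n-1)(1-b^2)(\sigma_jb^j) - f_{;ij}b^ib^j + \sigma(1-b^2)(f_kb^k)$ is a scalar on $M$. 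Thus (\ref{G-si0i}) $\Leftrightarrow$ $(\star)$ under our hypotheses.

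Next, I would compare $(\star)$ with (\ref{sigma0-f}). Both are linear polynomials in $y$ and share identical $\sigma_0$, $f_0$, $f_ks^k_{\ 0}$, $(f_jb^j)s_0$, and $f_{;0j}b^j$ terms; they differ only in the coefficient of $\beta$. Specifically, $(\star) - (\ref{sigma0-f}) = \beta\cdot W$, where $W := \Phi/(1+b^2) - [2\sigma(f_kb^k) - f_ks^k]$ is a scalar in $x$. Hence the lemma reduces to the statement $W \equiv 0$, equivalently, to the scalar identity
\[
(\mathrm{Z})\qquad (2n-1)(1-b^2)(\sigma_kb^k) \;=\; \sigma(1+3b^2)(f_kb^k) - (1+b^2)f_ks^k + f_{;ij}b^ib^j.
\]

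The final step is to verify that each of $(\star)$ and (\ref{sigma0-f}) already implies (Z). Differentiating the equation in $y^k$ and contracting with $b^k$, while using the standard identities $s^i_{\ j}b^j = -s^i$ (from the antisymmetry of $s_{ij}$ and the definition $s_j = b^is_{ij}$), $s_jb^j = 0$, and $b_jb^j = b^2$, yields (Z) directly. For (\ref{sigma0-f}) this is a one-line computation in which $\sigma(1+b^2)(f_kb^k)$ (from $\sigma(1+b^2)f_0$) combines with $2\sigma b^2(f_kb^k)$ (from $2\sigma\beta(f_jb^j)$) to produce the coefficient $\sigma(1+3b^2)$, while $-f_is^ib^2$ (from $-f_is^i\beta$) combines with $-f_ks^k$ (from $f_ks^k_{\ 0}$) to give $-(1+b^2)f_ks^k$. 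For $(\star)$, the analogous contraction produces a relation that still contains $\Phi/(1+b^2)$; clearing the denominator and using the algebraic identity $(1+b^2)^2 + b^2(1-b^2) = 1+3b^2$ isolates exactly (Z). Consequently $W \equiv 0$ follows from either side, so $(\star)$ and (\ref{sigma0-f}) coincide, completing the equivalence. The main obstacle is the careful bookkeeping of the $(1\pm b^2)$ factors when contracting $(\star)$; no additional geometric input beyond Lemma \ref{lem61} is needed.
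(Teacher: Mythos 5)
Your proposal is correct and follows essentially the same route as the paper: both arguments hinge on Lemma \ref{lem61} to replace (\ref{G-si0i}) by the identity you call $(\star)$ (the paper's (\ref{sigma-0-f})), and both reduce the equivalence with (\ref{sigma0-f}) to the contracted scalar relation (Z) (the paper's (\ref{sigma-jb})), obtained by differentiating in $y$ and contracting with $b$. Your presentation is a slightly more symmetric packaging of the same computation, and the bookkeeping of the $(1\pm b^2)$ factors checks out.
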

\begin{proof}  Assume that (\ref{sigma0-f}) holds. Then
\beqn (2n-1)(1-b^2)\sigma_i=\sigma(1+b^2)f_i+f_j(s^j_{\ i}-s^jb_i)+f_{;ij}b^j+(s_i+2\sigma b_i)(f_jb^j), \eeqn which implies that
\beq (2n-1)(1-b^2)\sigma_ib^i=\sigma (1+3b^2)f_ib^i+f_{;ij}b^ib^j-(1+b^2)f_js^j. \label{sigma-jb}\eeq
 Inserting (\ref{sigma0-f}) and (\ref{sigma-jb}) into (\ref{GRS-si0i*}) gives (\ref{G-si0i}).

Conversely, assume that (\ref{G-si0i}) holds. By Lemma \ref{lem61}, the right hand sides of (\ref{G-si0i}) and (\ref{GRS-si0i*}) are equal, which means that
\beq (2n-1)(1-b^2)\sigma_0&=&\sigma(1+b^2) f_0+f_js^j_{\ 0} +f_{;0j}b^j+(f_jb^j)s_0+\frac{(2n-1)(1-b^2)}{1+b^2}(\sigma_jb^j)\beta\nonumber \\
 & & -\frac{1}{1+b^2}(f_{;jl}b^jb^l)\beta+\frac{\sigma(1-b^2)}{1+b^2}(f_jb^j)\beta.\label{sigma-0-f}\eeq
Differentiating this with respect to $y^i$ and then contracting this with $b^i$ yields (\ref{sigma-jb}).  From this and (\ref{sigma-0-f}), one obtains (\ref{sigma0-f}). This finishes the proof.
\end{proof}

 \begin{proof}[Proof of Theorem~{\upshape\ref{thm14}}] It directly follows from Theorem \ref{thm61} and Lemma \ref{lem62}. \end{proof}

\bigskip

Next we give a navigation description for Randers gradient almost Ricci solitons. Let $F=\alpha+\beta$ be a Randers metric expressed by (\ref{F-hW}) in terms of  navigation data $(h, W)$, where $\alpha, \beta$ are given by (\ref{ab}). Recall that $\mathcal R_{ij}$ and $\mathcal S_{ij}$ are defined by (\ref{RS}). Let
\beqn  \mathcal R_j:=W^i\mathcal R_{ij}, \ \  \mathcal R:=W^j\mathcal R_{j},\ \ \mathcal S_j:=W^i\mathcal S_{ij}, \ \  \mathcal S^i_{\ j}:=h^{ik}\mathcal S_{kj}.\eeqn Obviously,  $\mathcal S_iW^i=0$ and $\lambda_{:k}=-2(\mathcal R_k+\mathcal S_k)$.
The geodesic coefficients $G_\alpha^i$ of $\alpha$ are related with those $G_h^i$ of $h$ by
$G^i_\alpha=G^i_h+\zeta^i$, where
 \beqn \zeta^i:=\frac 1{\lambda}(\mathcal R_0+\mathcal S_0)y^i+\frac 12 \mathcal R_{00}W^i+\left(\frac {h^2}{2\lambda}+\frac {W_0^2}{\lambda^2}\right)\left(\mathcal R W^i-\mathcal R^i-\mathcal S^i\right)+\frac {W_0}{\lambda}\left(\mathcal R_0W^i+\mathcal S^i_{\ 0}\right). \label{zeta-i} \eeqn
 From this, we have
 \beq r_{ij}&=&-\mathcal R_{ij}-\left(\frac 1{\lambda}h_{ij}+\frac 2{\lambda^2}W_iW_j\right)\mathcal R+\frac 1{\lambda^2}\left(\mathcal S_iW_j+\mathcal S_jW_i\right)+\frac{1-\lambda}{\lambda^2}\left(\mathcal R_iW_j+\mathcal R_jW_i\right), \label{r-ij-h}\\
 s_{ij}&=&-\frac 1{\lambda}\mathcal S_{ij}+\frac 1{\lambda^2}\left[(\mathcal R_i+\mathcal S_i)W_j-(\mathcal R_j+\mathcal S_j)W_i\right],\label{s-ij-h}\\
 r_j&=&\mathcal R_j+\frac 1{\lambda}\mathcal R W_j-\frac{1-\lambda}{\lambda}(\mathcal R_j+\mathcal S_j), \ \ \  s_j=\mathcal S_j-\frac 1{\lambda}\mathcal RW_j+\frac{1-\lambda}{\lambda}(\mathcal R_j+\mathcal S_j).\label{rs-j-h}\eeq These formulae were first given in \cite{BR} and \cite{Ro}. They also can be found in \cite{ChS} and \cite{CS2}. Recall that $S_{BH}=(n+1)\sigma F$ if and only if
$ \mathcal R_{00}=-2\sigma h^2$ (Lemma \ref{lem51}). In this case, we have
\beq \mathcal R_{ij}=-2\sigma h_{ij}, \ \ \ \ \mathcal R_j=-2\sigma W_j, \ \ \ \ \mathcal R=-2\sigma(1-\lambda)\label{RRRR} \eeq
and
\beq \zeta^i=\frac 1{\lambda}(\mathcal S_0-2\sigma W_0)y^i-\frac{\lambda h^2+2W_0^2}{2\lambda^2}\mathcal S^i+\frac{W_0}{\lambda}\mathcal S^i_{\ 0}.\label{zeta*}\eeq
Moreover, by (\ref{s-ij-h})-(\ref{RRRR}), we get
\beq s_0=\frac 1{\lambda}\mathcal S_0, \ \ \ \ s^i_{\ j}=-\mathcal S^i_{\ j}+\frac 1{\lambda}\mathcal S^iW_j.\label{ss-SS}\eeq
\begin{lem}\label{lem63} Let $F$ be a Randers metric with navigation data $(h, W)$ on a measure space $(M, m)$ and $f$ be the weighted function of $m$. Assume that $S_{BH}=(n+1)\sigma F$. Then
\beqn \dot S(x, y)=(n+1)\sigma_0F-2\sigma f_0 F+2(f_k\mathcal S^k_{\ 0})F+(f_k\mathcal S^k)F^2 +{\rm Hess}_h(f)(y), \label{dot-S-Sk}\eeqn where ${\rm Hess}_h(f)$ stands for the Hessian of $f$ with respect to $h$.
\end{lem}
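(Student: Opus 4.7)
The starting point is formula (\ref{S-dot-ab}) already derived in the excerpt from $S=S_{BH}+f_0$ and the assumption $S_{BH}=(n+1)\sigma F$:
\[
\dot S=(n+1)\sigma_0 F-2\sigma(\alpha-\beta)f_0+2s_0f_0-2(f_ks^k_{\ 0})\alpha+f_{;00}.
\]
The whole proof is then a translation of each ingredient on the right side into navigation data, using identities already set up in \S\ref{sec5} and the beginning of \S\ref{sec6}.

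First, I would convert the purely algebraic pieces. Since $b_i=-W_i/\lambda$ by (\ref{ab}), we have $\beta=-W_0/\lambda$, $\alpha=F-\beta=F+W_0/\lambda$, and therefore $\alpha-\beta=F+2W_0/\lambda$. The identities (\ref{ss-SS}) give $s_0=\lambda^{-1}\mathcal S_0$ and $s^k_{\ 0}=-\mathcal S^k_{\ 0}+\lambda^{-1}\mathcal S^k W_0$. Substituting these rewrites the three middle terms of $\dot S$ entirely in terms of $F$, $W_0$, $\mathcal S_0$, $\mathcal S^k_{\ 0}$ and $\mathcal S^k$.

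The key step is handling $f_{;00}$. For any smooth function $f$, both $f_{;00}$ and ${\rm Hess}_h(f)(y)$ equal $(\partial_i\partial_j f)y^iy^j$ minus twice the corresponding geodesic coefficient contracted with $df$, so
\[
f_{;00}={\rm Hess}_h(f)(y)-2(G^k_\alpha-G^k_h)f_k={\rm Hess}_h(f)(y)-2\zeta^k f_k.
\]
Under the hypothesis $S_{BH}=(n+1)\sigma F$, we have $\mathcal R_{00}=-2\sigma h^2$ and $\zeta^i$ simplifies to the expression (\ref{zeta*}). Contracting with $f_k$ gives
\[
2\zeta^k f_k=\frac{2}{\lambda}(\mathcal S_0-2\sigma W_0)f_0-\frac{\lambda h^2+2W_0^2}{\lambda^2}(f_k\mathcal S^k)+\frac{2W_0}{\lambda}(f_k\mathcal S^k_{\ 0}).
\]

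The last step is to collect coefficients of $f_0$, $f_k\mathcal S^k_{\ 0}$ and $f_k\mathcal S^k$ in the combined expression. A short bookkeeping computation shows: the $f_0$-coefficient is $-2\sigma(F+2W_0/\lambda)+2\mathcal S_0/\lambda-2(\mathcal S_0-2\sigma W_0)/\lambda=-2\sigma F$; the $f_k\mathcal S^k_{\ 0}$-coefficient is $2(F+W_0/\lambda)-2W_0/\lambda=2F$; and the $f_k\mathcal S^k$-coefficient is $-2W_0 F/\lambda+h^2/\lambda$, which becomes $F^2$ after applying the fundamental navigation identity $h^2=\lambda F^2+2FW_0$ from (\ref{hWF}). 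Assembling these pieces yields the asserted formula. The only real obstacle is arithmetic bookkeeping — tracking signs and the $\lambda^{-1}$, $\lambda^{-2}$ factors so that the cancellations actually occur — but no new geometric input beyond (\ref{S-dot-ab}), (\ref{zeta*}), (\ref{ss-SS}) and (\ref{hWF}) is needed.
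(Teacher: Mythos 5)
Your proposal is correct and follows essentially the same route as the paper's proof: the paper likewise starts from (\ref{S-dot-ab}), converts $f_{;00}$ to ${\rm Hess}_h(f)(y)-2\zeta^kf_k$ with $\zeta^i$ from (\ref{zeta*}) (this is exactly (\ref{Ha-Hh})), substitutes (\ref{ss-SS}), $W_0=-\lambda\beta$ and (\ref{hWF}), and collects terms. Your coefficient bookkeeping checks out, so there is nothing to add.
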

\begin{proof} By the assumption, we have $G^i_\alpha=G^i_h+\zeta^i$, where $\zeta^i$ are given by (\ref{zeta*}). Thus
\beq {\rm Hess}_\alpha(f)(y)&=&f_{;ij}y^iy^j=f_{:ij}y^iy^j-2f_k\zeta^k \nonumber \\
&=& {\rm Hess}_h(f)(y)-\frac 2{\lambda}f_0(\mathcal S_0-2\sigma W_0)-\frac{2W_0}{\lambda}f_k\mathcal S^k_{\ 0}+\frac{\lambda h^2+2W_0^2}{\lambda^2}f_k\mathcal S^k.\label{Ha-Hh}\eeq
From  (\ref{S-dot-ab}), (\ref{ss-SS})-(\ref{Ha-Hh}), (\ref{hWF}) and $W_0=-{\lambda}\beta$, one obtains that
\beqn \dot S&=&(n+1)\sigma_0F-2\sigma f_0(\alpha-\beta)+2f_0s_0-2(f_ks^k_{\ 0})\alpha+{\rm Hess}_\alpha(f)(y) \nonumber \\
&=&(n+1)\sigma_0F-2\sigma f_0(\alpha-\beta)+\frac 2{\lambda}f_0\mathcal S_0-2f_k\left(-\mathcal S^k_{\ 0}+\frac 1{\lambda}\mathcal S^kW_0\right)\alpha\nonumber \\
& & -\frac 2{\lambda}f_0(\mathcal S_0+2\lambda\sigma\beta)+2\beta f_k\mathcal S^k_{\ 0}+\frac{\lambda h^2+2W_0^2}{\lambda^2}f_k\mathcal S^k+{\rm Hess}_h(f)(y)\nonumber \\
&=& (n+1)\sigma_0F-2\sigma f_0 F+2(f_k\mathcal S^k_{\ 0})F+(f_k\mathcal S^k)F^2+ {\rm Hess}_h(f)(y).\eeqn
This finishes the proof. \end{proof}

 \begin{proof}[Proof of Theorem~{\upshape\ref{thm15}}]  Assume that $(M, F, m)$ is a gradient almost Ricci soliton with soliton scalar $\kappa$, namely, ${\rm Ric}+\dot S=\kappa F^2$.
 Then $S_{BH}=(n+1)\sigma F$ for some function $\sigma=\sigma(x)$ on $M$ by Proposition \ref{prop61}. From Lemma \ref{lem53},  we have
\beq -3(n-1)\sigma_0 F=\widetilde{\rm{Ric}}+\dot S-\tilde\kappa \tilde h^2, \label{tilde-Ric}\eeq  where $\tilde\kappa:=\kappa+(n-1)\sigma^2+2(n-1)\sigma_iW^i$. Recall that $\xi=y-FW$, here $F=F(x, y)=\tilde h(x, \xi)$ by (\ref{tilde-hF}). Then,
\beq & \sigma_0=\tilde\sigma_0+\tilde h\sigma_iW^i, \ \ \ \  f_0=\tilde f_0+\tilde hf_iW^i, \ \ \ \  f_k\mathcal S^k_{\ 0}=f_k\widetilde{\mathcal S}^k_{\ 0}-\tilde hf_k\mathcal S^k, \nonumber\\
& {\rm Hess}_h(f)(y)=f_{:ij}(\xi^i+FW^i)(\xi^j+FW^j)=\tilde f_{:00}+2\tilde h\tilde f_{:0j}W^j +\tilde h^2{\rm Hess}_h(f)(W).\label{Hess-y-xi}\eeq
 From these and Lemma \ref{lem63},  (\ref{tilde-Ric}) becomes
\beqn  & & \widetilde{\rm Ric}+\tilde f_{:00}+\tilde h\left\{2(2n-1)\tilde \sigma_0-2\sigma \tilde f_0+2 f_k\widetilde{\mathcal S}^k_{\ 0}+2\tilde f_{:0j}W^j\right\}\nonumber\\
& &+\tilde h^2\left\{2(2n-1)\sigma_iW^i-2\sigma f_iW^i-f_k\mathcal S^k+{\rm Hess}_h(f)(W)-\tilde \kappa\right\}=0.\eeqn
Since $\tilde h$ is an irrational function in $\xi$, by the same arguments as in (\ref{abf-kappa})-(\ref{G-Ric-a}), this equation is equivalent to the following two equations:
\beq & (2n-1)\tilde\sigma_0 -\sigma \tilde f_0+ f_k\widetilde{\mathcal S}^k_{\ 0}+\tilde f_{:0j}W^j=0, \label{sfSWx} \\
& \widetilde{\rm Ric}+\tilde f_{:00}+\tilde h^2\Big\{2(2n-1)\sigma_iW^i-2\sigma f_iW^i-f_k\mathcal S^k+{\rm Hess}_h(f)(W)-\tilde \kappa\Big\}=0.\label{Ric-xi}\eeq
Obviously, (\ref{sfSWx}) is equivalent to (\ref{sfSWx*}) and
%  which implies that $\sigma$ is constant when $f$ satisfies   \beqn \sigma  f_i-f_k{\mathcal S}^k_{\ i}- f_{:ij}W^j=0. \label{sigma-cons}\eeqn
 (\ref{Ric-xi}) implies that there is a function $\mu=\mu(x)$ on $M$ such that
\beq {}^h{\rm Ric}_{ij}\xi^i\xi^j+f_{:ij}\xi^i\xi^j=\mu \tilde h^2, \ \ \ {\rm equivalently}\ \ \ {}^h{\rm Ric}_{ij}+f_{:ij}=\mu h_{ij}, \label{h-Ric-f}\eeq which means that $(M, h, f)$ is a Riemannian gradient almost Ricci soliton with soliton scalar $\mu$, i.e.,
\beqn {}^h{\rm Ric}(y)+{\rm Hess}_h(f)(y)=\mu h^2. \eeqn Thus (\ref{Ric-xi}) is reduced to
\beq 2(2n-1)\sigma_iW^i- 2\sigma f_iW^i-f_k\mathcal S^k+{\rm Hess}_h(f)(W)=\tilde \kappa-\mu.\label{fW}\eeq
Differentiating (\ref{sfSWx}) in $\xi^i$ and then contracting this with $W^i$ yield
 \beq {\rm Hess}_h(f)(W)=\sigma f_iW^i+f_i\mathcal S^i-(2n-1)\sigma_iW^i.\label{fwi}\eeq
Plugging this in  (\ref{fW}) and using  $\tilde\kappa=\kappa+(n-1)\sigma^2+2(n-1)\sigma_iW^i$ lead to (\ref{H-hfW}).

 Conversely, assume that  $(M, h, f)$ is a Riemannian gradient almost Ricci soliton with  soliton scalar $\mu$, $W$ is a conformal vector field of $h$ with conformal factor $-\sigma$, equivalently, $S_{BH}=(n+1)\sigma F$, and $f$ satisfies (\ref{sfSWx*})-(\ref{H-hfW}). Thus we get (\ref{h-Ric-f}) and (\ref{fwi}) from (\ref{sfSWx*}). Moreover, by (\ref{sfSWx*}) and $y^i=\xi^i+FW^i$, we have
 \beq  \sigma f_0-f_k\mathcal S^k_{\ 0}=(2n-1)\sigma_0+f_{:ij}\xi^iW^j+F{\rm Hess}_h(f)(W). \label{fwyi}\eeq From (\ref{Hess-y-xi}) and (\ref{fwi}) we get
\beq {\rm Hess}_h(f)(y)= f_{:ij}\xi^i\xi^j+2F f_{:ij}\xi^iW^j +F^2\Big\{\sigma f_iW^i+f_i\mathcal S^i-(2n-1)\sigma_iW^i\Big\}. \label{Hfy}\eeq
Thus, it follows from Lemma \ref{lem63}, (\ref{fwyi})-(\ref{Hfy}), (\ref{h-Ric-f}), (\ref{fwi}) and (\ref{H-hfW}) that
\beqn \widetilde{\rm Ric}+\dot S&=&\widetilde{\rm Ric}+(n+1)\sigma_0 F-2F\Big\{(2n-1)\sigma_0+f_{:ij}\xi^iW^j+F{\rm Hess}_h(f)(W)\Big\}\nonumber \\
& &+(f_i\mathcal S^i)F^2+ f_{:ij}\xi^i\xi^j+2F f_{:ij}\xi^iW^j +F^2\Big\{\sigma f_iW^i+f_i\mathcal S^i-(2n-1)\sigma_iW^i\Big\}\nonumber \\
&=& \mu \tilde h^2-3(n-1)\sigma_0 F+\tilde h^2\Big\{-2{\rm Hess}_h(f)(W)+\sigma f_iW^i+ 2f_i\mathcal S^i-(2n-1)\sigma_iW^i\Big\}\nonumber \\
&=&\mu \tilde h^2-3(n-1)\sigma_0 F+ \tilde h^2\Big\{-\sigma f_iW^i+\sigma_iW^i+2(n-1)\sigma_iW^i\Big\}\nonumber \\
&=&\mu \tilde h^2-3(n-1)\sigma_0 F+\Big\{\kappa-\mu+(n-1)\sigma^2+ 2(n-1)\sigma_iW^i\Big\}\tilde h^2\nonumber \\
&=& \tilde \kappa \tilde h^2-3(n-1)\sigma_0 F, \eeqn that is, $-3(n-1)\sigma_0 F=\widetilde{\rm Ric}+\dot S -\tilde\kappa\tilde h^2.$ From this and (\ref{Ric-Ric}), one obtains that ${\rm Ric}+\dot S=\kappa F^2$, i.e., $(M, F, m)$ is a gradient almost Ricci soliton.
\end{proof}

\section{Examples for nontrivial gradient Ricci solitons} \label{sec7}

In this section, we give some examples of nontrivial shrinking, steady and expanding gradient Ricci solitons based on Corollary \ref{cor16}.
By Theorem \ref{thm15} (especially, Corollary \ref{cor16}),  the Riemannian structure $(M, h, f)$ associated to a Randers gradient almost Ricci soliton with navigation data $(h, W)$ shall be a gradient almost Ricci soliton as well.

\begin{ex} \label{ex71} {\bf (Gaussian solitons)}  {\rm Let $(\mathbb R^n, h)$ denote a Euclidean space with its standard metric $h=|\cdot|$.  First, one may take $\kappa=0$ and $V=0$, and see $(\mathbb R^n, h, V)$ as a steady Ricci soliton (trivial soliton). However, for any $\rho\in \mathbb R$, if we take $f=\frac \rho 2|x|^2$ for any $x\in \mathbb R^n$, then $(\mathbb R^n, h, f)$ is a Riemannian expanding or shrinking gradient Ricci soliton with soliton constant $\rho$ depending on the signs of $\rho$. The choice of $f$ is unique up to a constant. The soliton $(\mathbb R^n, |\cdot|, e^{-\frac \rho 2|x|^2})$ is called the {\it Gaussian soliton} (\cite{CK}, \cite{CLN}).

Let $m$ be the measure determined by $dm=e^{-f}dm_{BH}$ on $\mathbb R^n$, where $f=\frac \rho 2|x|^2$. Assume that $F$ is of constant S$_{BH}$-curvature $\sigma$, equivalently, $W_{i:j}+W_{j:i}=-4\sigma \delta_{ij}$. Solving this equation yields
 $W=-2\sigma x+Qx+C$, where $Q=(q^i_{\ j})$ is a skew-symmetric matrix and $C$ is a constant vector in $\mathbb R^n$ (cf. \cite{BRS}). Obviously, $\mathcal S^i_{\ j}=\mathcal S_{ij}=q_{ij}$. On the other hand, $(f, W)$ satisfies (\ref{f-sigma-hw}), i.e., $\sigma f_0-f_k\mathcal S^k_{\ 0}-f_{:0j}W^j=0$ if and only if either $\rho=0$,  or $\sigma=0$, $C=0$ by solving PDE.  This means that $W=Qx$ when $\rho\neq 0$ and $W=-2\sigma x+Qx+C$ when $\rho=0$.
  If $\rho=0$, then $f\equiv 1$ and $\dot S=0$ by Lemma \ref{lem63}. In this case, the gradient Ricci soliton is trivial. When $\rho\neq 0$, we have $W=Qx=0$ since $\|W\|_h<1$ for any $x\in \mathbb R^n$. Thus, the Randers metric $F$ expressed by (\ref{F-hW}) must be Euclidean, i.e., $F(x, y)=h(y)=|y|$ for any $y\in \mathbb R^n$. By Corollary \ref{cor16}, there is no non-Riemannian Randers gradient Ricci soliton in $\mathbb R^n$ with respect to the measure $e^{-\frac{\rho}2|x|^2}dx$ ($\rho\neq 0$). However, if we consider the subset $\Omega:=\{x\in \mathbb R^n\|W\|_h<1\}$ in $\mathbb R^n$, the metric $\bar F$ expressed by (\ref{F-hW}) in which $W=Qx$ is a Randers metric on $\Omega$. In this case, $(\Omega, \bar F, m)$ is a gradient Ricci soliton with soliton constant $\rho$ with respect to the measure $m$ determined by $dm=e^{-\frac{\rho}2|x|^2}dx$ by Corollary \ref{cor16}. }\end{ex}

\begin{ex} \label{ex72} {\bf (Steady Ricci soliton)} {\rm  Let $(\mathbb R^2, h)$ be a complete noncompact Riemannian manifold with complete metric $h=\frac{dx^2+dy^2}{1+x^2+y^2}$. In polar coordinates $(r, \theta)$, $h$ is rewritten as
\beq h=\frac{dr^2+r^2d\theta^2}{1+r^2}.\label{h-R2}\eeq  Let $t=\operatorname{arcsinh}r=\log(r+\sqrt{1+r^2})$. Then
\beq h=dt^2+\tanh^2td\theta^2, \ \ \ \ \ ^h{\rm Ric}(Y)=\frac 2{\cosh^2t}h^2,\label{h-Ric-R2}\eeq where $h^2=h(Y, Y)$ for $Y=y^1\frac{\partial}{\partial t}+y^2\frac{\partial}{\partial \theta}\in T_{(t, \theta)}\mathbb R^2$. The Christoffel symbols of $h$ are given by  \beq \Gamma_{11}^1=\Gamma_{11}^2=\Gamma_{12}^1=\Gamma_{22}^2=0, \ \ \ \Gamma_{12}^2=\frac 2{\sinh(2t)}, \ \ \ \Gamma_{22}^1=-\frac{\tanh t}{\cosh^2t}.\label{Gamma-R2}\eeq
Let \beq f(t, \theta):=-2\log(\cosh t).\label{f-R2}\eeq  Then \beq & f_1=f'(t)=-2\tanh t, \ \ \ f_2=f'(\theta)=0, \label{fi-R2} \\
 & f_{:11}=f''(t)=-\frac 2{\cosh^2 t}, \ \ \ f_{:12}=f_{:21}=0, \ \ \ f_{:22}=-\frac{2\tanh^2t}{\cosh^2t}, \label{fij-R2}\eeq
 which implies that Hess$_h(f)(Y)=-\frac 2{\cosh^2 t}h^2.$  Consequently, $^h{\rm Ric}(Y)+{\rm Hess}_h(f)(Y)=0$, namely,  $(\mathbb R^2, h, f)$ is a Riemannian gradient steady Ricci soliton, called the {\it Hamilton's cigar soliton} (\cite{Ha3}). By Lemma 2.7 in \cite{CK},  $(\mathbb R^2, h, f)$ defined by (\ref{h-R2}) and (\ref{f-R2}) is the unique rotationally symmetric gradient Ricci soliton with  positive sectional curvature on $\mathbb R^2$ up to homothety.

Assume that $F$ has constant S$_{BH}$-curvature $\sigma$, i.e.,  $S_{BH}=3\sigma F$, and  $W=W^1\frac{\partial}{\partial t}+W^2\frac{\partial}{\partial \theta}$ is a vector field on $\mathbb R^2$.  Consider the following PDE system:
\beq W_{i:j}+W_{j:i}=-4\sigma h_{ij}, \ \ \ \  \sigma  f_0-f_k{\mathcal S}^k_{\ 0}- f_{:0j}W^j=0. \label{WFS} \eeq
From (\ref{h-R2})-(\ref{fij-R2}), (\ref{WFS})$_2$ can be simplified as
\beqn \sigma f_1-f_{:11}W^1=0, \ \ \ \ \sigma f_2-f_1\mathcal S^1_{\ 2}-f_{:22}W^2=0, \eeqn which implies that
\beq W^1=W_1=\frac 12 \sigma\sinh(2t), \ \ \ (W_1)_\theta-(W_2)_t=-\frac{2\tanh t}{\cosh^2 t}W^2=-\frac 4{\sinh(2t)}W_2,\label{WW-12}\eeq
 and (\ref{WFS})$_1$ is simplified as \beq (W_1)_t=-2\sigma, \ \ \  (W_1)_\theta+(W_2)_t=\frac 4{\sinh(2t)}W_2, \ \ \  (W_2)_{\theta}+\frac{\tanh t}{\cosh^2 t}W_1=-2\sigma \tanh^2t.\label{WW-12-tt}\eeq From (\ref{WW-12})$_1$ and (\ref{WW-12-tt})$_1$, we get $\sigma=0$ and hence $W^1=W_1=0$. From (\ref{WW-12-tt})$_3$, we have $(W_2)_\theta=0$, i.e., $W_2$ is only a function of $t$. Thus (\ref{WW-12})$_2$ and (\ref{WW-12-tt})$_2$ are reduced to $(\log W_2)_t=\frac 4{\sinh(2t)}$. Integrating this yields $W_2=C\tanh^2t$ for some constant $C>0$. Up to a positive constant, $W_2=\tanh^2t$ and hence $W^2=1$. Thus (\ref{WFS}) has the unique solution $W=\frac{\partial}{\partial \theta}$. In this case, $\lambda=1-\|W\|_h^2=\frac 1{\cosh^2t}>0$ and $W_0=(\tanh^2t)y^2$.

Let $F$ be expressed by (\ref{F-hW}),  that is,
$$F(t, \theta; Y)=(\cosh t)\sqrt{(y^1)^2+\sinh^2t(y^2)^2}-(\sinh^2t)y^2, $$ where $Y=y^1\frac{\partial}{\partial t}+y^2\frac{\partial}{\partial \theta}\in \mathbb R^2$, and
$m$ be the measure determined by $dm=e^{-f}dm_{BH}$ on $(M, F)$.
Back to the original coordinate system $(x, y)$,  $f=-2\log\sqrt{1+x^2+y^2}$ and
\beq F(\hat x, Y)=\sqrt{\frac {(ux+vy)^2+(1+x^2+y^2)(uy-vx)^2}{x^2+y^2}}+uy-vx. \label{F-R2}\eeq where $\hat x=(x, y)\in \mathbb R^2$ and $Y=(u, v)\in T_{\hat x}\mathbb R^2\cong \mathbb R^2$.  Observe that the point $(x, y)=(0, 0)$ is a removable singular point of $F$. Indeed, $\lim\limits_{\hat x\rightarrow 0} F(\hat x, Y)=|Y|=\sqrt{u^2+v^2}$. Thus $F$ is a regular Randers metric on $\mathbb R^2$.
 % we have $\cosh t=\sqrt{1+x^2+y^2}$ and
%$\tanh^2 t=\frac {r^2}{1+r^2}=\frac{x^2+y^2}{1+x^2+y^2}$. Moreover,
% \beqn \xi=(\xi^1, \xi^2)\left(\begin{array}{ll}(\cosh t)\frac{\partial}{\partial r}\\ \ \ \ \frac{\partial}{\partial \theta}\end{array}\right)=(\xi^1, \xi^2)\left(\begin{array}{ll} \cosh t\cos\theta & \cosh t\sin\theta \\ -r\sin\theta & r\cos\theta\end{array}\right) \left(\begin{array}{ll}\frac{\partial}{\partial x}\\ \frac{\partial}{\partial y}\end{array}\right)\eeqn
%Denote $\xi=u\frac{\partial}{\partial x}+v\frac{\partial}{\partial y}$. Then
%\beqn  (\xi^1, \xi^2)=\frac 1{r\cosh t}(u, v)\left(\begin{array}{ll} r\cos\theta & -\cosh t\sin\theta \\ r\sin\theta & \cosh t\cos\theta \end{array}\right).\eeqn
By Corollary \ref{cor16}, $(\mathbb R^2, F, m)$ is a Randers gradient steady Ricci soliton, called a {\it Randers cigar Ricci soliton}.
Obviously, it is invariant under all rotations in $\mathbb R^2$, i.e., $F(A\hat x, AY)=F(\hat x, Y)$ for all $A\in O(2)$, that is,  it is a rotationally symmetric metric, called a {\it spherically symmetric metric} introduced by L. Zhou (\cite{Zh}).
By Lemma \ref{lem53} and (\ref{h-Ric-R2}), we have ${\rm Ric}=\frac 2{\cosh^2t} F^2$, which implies that $F$ is of positive flag curvature $\mathbf K_F=\frac 2{\cosh^2t}$.

Note that every Randers metric may be obtained in terms of the navigation data $(h, W)$. As mentioned before,  $(\mathbb R^2, h, f)$ is the unique rotationally symmetric gradient Ricci soliton with positive sectional curvature on $\mathbb R^2$ up to a homothety and $W=\frac{\partial}{\partial \theta}$ is a unique solution of (\ref{WFS}). Then $(\mathbb R^2, F, m)$ is a unique spherically symmetric gradient Ricci soliton. Summing up, one obtains
\begin{prop} The Randers cigar soliton $(\mathbb R^2, F, m)$ defined by (\ref{F-R2}) and (\ref{f-R2}) is the unique gradient Ricci soliton of constant S$_{BH}$-curvature and positive flag curvature on $\mathbb R^2$  up to homothety. In this case, S$_{BH}\equiv 0$ and $\mathbf K_F=\frac 2{\cosh^2t}$. \end{prop}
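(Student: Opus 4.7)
The plan is to follow the structure already sketched in Example~\ref{ex72} and show that both ingredients of the navigation data, the Riemannian soliton $(h,f)$ and the wind $W$, are forced to be exactly those displayed, up to a homothety of the whole construction. I would first invoke Corollary~\ref{cor16}: for any Randers metric $F$ on $\mathbb{R}^2$ of constant $S_{BH}$-curvature $\sigma$ which, together with some measure $dm=e^{-f}dm_{BH}$, forms a gradient Ricci soliton with soliton constant $\kappa$, the underlying Riemannian pair $(\mathbb{R}^2,h,f)$ must itself be a Riemannian gradient Ricci soliton with soliton constant $\mu=\kappa+(n-1)\sigma^{2}+\sigma f_{i}W^{i}$ and the wind $W$ must satisfy the system~\eqref{WFS} together with the conformal condition $W_{i:j}+W_{j:i}=-4\sigma h_{ij}$.

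Next I would transfer the positivity of the flag curvature of $F$ to positivity of the sectional curvature of $h$. In dimension two the Ricci curvature equals the sectional curvature (times $h^{2}$), and Lemma~\ref{lem53} gives
\[
\mathrm{Ric}(x,y)-(n-1)\Big(\tfrac{3\sigma_{0}}{F}+\tilde\mu-\sigma^{2}-2\sigma_{i}W^{i}\Big)F^{2}=\widetilde{\mathrm{Ric}}-(n-1)\tilde\mu\,\tilde h^{2}.
\]
Since $F$ is assumed of constant $S_{BH}$-curvature (so $\sigma_{0}=0$) and of positive flag curvature, one reads off that ${}^{h}\mathrm{Ric}>0$, i.e.\ $h$ has positive Gaussian curvature. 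By Lemma 2.7 of~\cite{CK} the unique (up to homothety) rotationally symmetric Riemannian gradient Ricci soliton on $\mathbb{R}^{2}$ with positive sectional curvature is Hamilton's cigar, so after a rescaling $h$ and $f$ must be given by \eqref{h-R2} and \eqref{f-R2}. (Spherical symmetry of $F$ forces $h$ and $W$ to be rotationally invariant; without this assumption the statement would require more discussion, but the hypothesis ``spherically symmetric'' / ``rotationally symmetric'' is what is being used in Example~\ref{ex72} and hence is the natural setting of the statement.)

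With $h$ and $f$ fixed, the last task is to show that the system~\eqref{WFS} has, up to the trivial scaling $W\mapsto cW$ absorbed in the overall homothety, exactly the one solution $W=\partial_{\theta}$ with $\sigma=0$. The calculation from \eqref{WW-12} to \eqref{WW-12-tt} in the example carries this out: the two equations $(W_{1})_{t}=-2\sigma$ and $W_{1}=\tfrac12\sigma\sinh(2t)$ force $\sigma=0$ and $W_{1}=0$; then $(W_{2})_{\theta}=0$ and $(\log W_{2})_{t}=4/\sinh(2t)$ determine $W_{2}=C\tanh^{2}t$, which after the remaining scale freedom becomes $W=\partial_{\theta}$. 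The only subtlety here — and the main obstacle — is to check that this PDE analysis is truly exhaustive, i.e.\ that no non-smooth solutions or solutions with $\|W\|_{h}=1$ on some locus can yield a soliton, and that the resulting metric \eqref{F-R2} extends smoothly across $r=0$ (this is checked in the example via the limit $\lim_{\hat x\to 0}F=|Y|$).

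Finally I would record the two numerical conclusions. Constancy $\sigma\equiv 0$ follows from the step above, giving $S_{BH}\equiv 0$. For the flag curvature, apply Lemma~\ref{lem53} once more with $\sigma\equiv 0$ and $\tilde\mu={}^{h}\mathrm{Ric}/h^{2}=2/\cosh^{2}t$ (from \eqref{h-Ric-R2}) to obtain $\mathrm{Ric}(x,y)=\frac{2}{\cosh^{2}t}\,F^{2}$, hence $\mathbf{K}_{F}=\frac{2}{\cosh^{2}t}$ in dimension two, closing the proof.
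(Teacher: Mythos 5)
Your proposal is correct and follows essentially the same route as the paper's own argument in Example~\ref{ex72}: reduce via Corollary~\ref{cor16} to the Riemannian pair $(h,f)$, identify it with Hamilton's cigar by Lemma 2.7 of \cite{CK}, show the PDE system (\ref{WFS}) forces $\sigma=0$ and $W=\partial/\partial\theta$ up to scale, and read off $\mathbf K_F=2/\cosh^2 t$ from Lemma~\ref{lem53}. Your explicit transfer of positive flag curvature of $F$ to positive Gaussian curvature of $h$ via Lemma~\ref{lem53}, and your flagging of the implicit reliance on rotational symmetry, are small refinements of steps the paper leaves tacit, not a different method.
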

}\end{ex}

% \begin{ex} \label{ex73} Consider a rotationally symmetric Riemannian metric $h$ on $\mathbb R^2$ of the form
% \beqn h=\tau^2(r)dr^2+r^2d\theta^2, \eeqn

\begin{ex} \label{ex73} {\bf (Shrinking Ricci soliton)} {\rm Let $(\mathbb S^{2m-1}, \hat h)$ be a $2m-1 (\geq 3)$-dimensional sphere in $\mathbb R^{2m}$ equipped with the standard Riemannian metric $\hat h$ of positive constant curvature $\mu$. For any $p\in \mathbb S^{2m-1}$, we identify $T_p\mathbb S^{2m-1}$ with $\mathbb R^{2m-1}$ in a natural way. Let $\mathbb S^{2m-1}_+$ and $\mathbb S^{2m-1}_-$ be the upper and lower hemisphere of $\mathbb S^{2m-1}$ and let
$$\psi_{\pm}: T_p\mathbb S^{2m-1}\cong \mathbb  R^{2m-1}\rightarrow \mathbb S_{\pm}^{2m-1}, \ \ \ \psi_{\pm}(x):=\left(\frac{x}{\sqrt{1+\mu|x|^2}}, \frac{\pm 1}{\sqrt{\mu(1+\mu|x|^2)}}\right),$$
be the stereographic projection from the center respectively. $\psi_{\pm}$ sends straight lines in $\mathbb R^{2m-1}$ to great circles on $\mathbb S^{2m-1}_{\pm}$. Denote by $x=(x^i)\in \mathbb S^{2m-1}$, where $\{x^i\}$ are usually called the {\it projective coordinates} on $\mathbb S^{2m-1}$. The metric $\hat h$ on $\mathbb S^{2m-1}$ can be expressed in projective form:
\beq \hat h=\frac {\sqrt{(1+\mu|x|^2)|y|^2-\mu\langle x, y\rangle ^2}}{1+\mu|x|^2}, \label{hat-h}\eeq
where $\langle x, y\rangle=\sum_ix^iy^i$ for any $y=y^i\frac{\partial}{\partial x^i}\in T_x\mathbb S^{2m-1}\cong \mathbb R^{2m-1}$. From this, we have
\beq \hat h_{ij}=\frac{\delta_{ij}}{1+\mu|x|^2}-\frac{\mu x_ix_j}{(1+\mu|x|^2)^2}, \ \ \ \ \hat h^{ij}=(1+\mu|x|^2)(\delta^{ij}+\mu x^ix^j),\label{hij}\eeq The Christoffel symbols of $\hat h$ are given by
$$\hat \Gamma_{ij}^k=-\frac {\mu}{1+\mu|x|^2}(x_i\delta^k_j+x_j\delta^k_i)$$ and  $^{\hat h}{\rm Ric}=2(m-1)\mu \hat h^2$.

 Let $Q=(q_{ij})$ be an skew-symmetric matrix and $d=(d^i)$ be a constant vector in $\mathbb R^{2m-1}$ with $|d|<1$,  $Q^TQ+\mu dd^T=\mu|d|^2E$ ($E$ is an identity matrix) and $Qd=0$. For example, in $3$-dimensional case,  we can take
\beq Q =\sqrt{\mu}\left(\begin{array}{ccc} 0 & p & q  \\  -p & 0&l \\ -q &-l &0 \end{array}\right), \ \ \ d=\pm \left(\begin{array}{c} l  \\  -q \\ p
\end{array}\right)\neq 0, \label{Q1} \eeq
where $p, q, l$ are  constants with $|d|^2=p^2+q^2+l^2<1$. It is easy to check that $Q$ and $d$ satisfy $Q^TQ+\mu dd^T=\mu |d|^2 E$ and $Qd=0$.
Then
 \beq \widehat{W}:= Qx+\mu\langle x , d\rangle x+d\label{W-Qd-1}\eeq
is a globally defined Killing vector field on the open upper hemisphere $(\mathbb S_+^{2m-1}, \hat h)$ with $\|\widehat W\|_{\hat h}=|d|<1$. Note that the expression (\ref{W-Qd-1}) should be understood in projective coordinates. Indeed, a direct calculation shows that $\psi_{+*}(\widehat W)=A p_+,$  where
 $$A:=\left(\begin{array}{ll} \ \ \ \ Q & \sqrt{\mu}d\\-\sqrt{\mu}d& \ \ \ \ 0\end{array}\right), \ \ \ \ \
 p_+:=\left(\begin{array}{ll}\frac{x}{\sqrt{1+\mu|x|^2}}\\ \frac 1{\sqrt{\mu(1+\mu|x|^2)}}\end{array}\right)=\psi_+(x)\in \mathbb S_+^{2m-1}.$$
  The continuity of $\widehat W$ on the closed hemisphere implies that its value at any point $p$ on the equator is also the matrix product $A p$. We extend $\widehat W$ to the open lower hemisphere by insisting that $\psi_{-*}(\widehat W)=A p_-$, where $$p_-:= \left(\begin{array}{ll}\frac{x}{\sqrt{1+\mu|x|^2}}\\ \frac {-1}{\sqrt{\mu(1+\mu|x|^2)}}\end{array}\right)=\psi_{-}(x)\in \mathbb S_-^{2m-1}.$$ It follows that $\widehat W=Qx-\mu\langle x , d\rangle x-d$. Obviously,
  the points $p_{\pm}$ goes to the poles $(0, \pm \frac 1{\sqrt{\mu}})^T$ as $x\rightarrow 0$. We construct $\widehat W$ from the
data $(Q, d)$ on the upper hemisphere and from $(Q, -d)$ on the lower hemisphere. The actual Killing field on $\mathbb S^{2m-1}$ has the value $A p$ at any point $p$, including the equator. Since $A$ is a constant matrix, the constructed $\widehat W$ is globally defined and smooth. Moreover, $\|\widehat W\|_{\hat h}=|d|<1$.

Let $M:=\mathbb R\times \mathbb S^{2m-1}$, which is an $2m (m\geq 2)$-dimensional cylinder equipped with a standard product metric $h^2=dt^2+\hat h^2$,  where $t\in \mathbb R$.
 For any $y=(y^1, \hat y)\in T_{(t, x)}M=\mathbb R\oplus T_x\mathbb S^{2m-1}$, where $(t, x)\in \mathbb R\times \mathbb S^{2m-1}$ and $\hat y=\sum_{i=2}^{2m}y^i\frac{\partial}{\partial x^i}\in T_x\mathbb S^{2m-1}$. Then $h^2(y)=(y^1)^2+\hat h^2(\hat y)$. By a direct calculation, the Christoffel symbols of $h$ are given by
\beq \Gamma_{11}^1=\Gamma_{11}^k=\Gamma_{1j}^1=\Gamma_{1j}^k=\Gamma_{ij}^1=0, \ \ \ \ \Gamma_{ij}^k=\hat \Gamma_{ij}^k= -\frac {\mu}{1+\mu|x|^2}(x_i\delta^k_j+x_j\delta^k_i), \label{gamma-1}\eeq where $2\leq i, j, k\leq 2m$, and $^h{\rm Ric}=2(m-1)\mu \hat h^2$.
Now we choose a function $$f(t, x):=(m-1)\mu t^2$$ and a vector field $W:=\widehat W$ on $M$, i.e., the component $W^1$ of $W$ is zero in $t$-direction and the rest components $W^i$ coincide with $\widehat W^i$, equivalently, $W_1=0$ and $W_i=\widehat W_i=\sum_{j=2}^{2m}\hat h_{ij}W^j$, where $2\leq i\leq 2m$. In this case, $\|W\|_h=\|\widehat W\|_{\hat h}=|d|<1$ and
$$W_0=\widehat{W}_i\hat y^i=\frac{\langle Qx+d , \hat y\rangle}{1+\mu|x|^2}.$$
It is easy to check from (\ref{gamma-1}) that
\beq f_{:11}=2(m-1)\mu, \ \ \ f_{:1j}=f_{:j1}=f_{:jk}=0,\label{ff-1}\eeq which mean that
Hess$_h(f)(y)=2(m-1) \mu(y^1)^2$,  and
\beq W_{1:1}=W_{1:j}=W_{j:1}=0, \ \ \ \ W_{i:j}=\frac{q_{ij}}{1+\mu|x|^2}+\frac{\mu\left(q_{jk}x^k x^i+x^id^j-q_{ik}x^kx^j-x^jd^i\right)}{(1+\mu|x|^2)^2},\label{WW-1}\eeq which mean that $W$ is a Killing vector field of $h$ on $M$, where $2\leq i, j, k\leq 2m$.  Thus,  $$^h{\rm Ric}+{\rm Hess}_h(f)(y)=2(m-1)\mu h^2,$$ that is, $(M, h, f)$ is a shrinking gradient Ricci soliton with soliton constant $2(m-1)\mu$.  Moreover, by (\ref{ff-1})-(\ref{WW-1}),  $f$ satisfies $f_k\mathcal S^k_{\ 0}+f_{:0k}W^k=0$ ($1\leq k\leq 2m$).

Let $F$ be defined by (\ref{F-hW}) in terms of $(h, W)$, i.e.,
\beqn F(x, y)&=&\frac{\sqrt{\lambda(1+\mu|x|^2)^2((y^1)^2+\hat h^2)+\langle Qx+d, \hat y\rangle^2}}{\lambda(1+\mu|x|^2)}-\frac{\langle Qx+d, \hat y\rangle}{\lambda(1+\mu|x|^2)}, \nonumber \\
&=&\frac{\sqrt{\lambda(1+\mu|x|^2)\left\{(1+\mu|x|^2)(y^1)^2+|\hat y|^2\right\}-\lambda\mu\langle x, \hat y\rangle^2+\langle Qx+d, \hat y\rangle^2}}{\lambda(1+\mu|x|^2)}-\frac{\langle Qx+d, \hat y\rangle}{\lambda(1+\mu|x|^2)}
\eeqn where $\lambda=1-|d|^2$, $x\in \mathbb R^{2m-1}$ and $y=(y^1, \hat y)\in T_{(t, x)}M$,  and $m$ be the measure determined by $dm=e^{-(m-1)\mu t^2}dm_{BH}$. By Corollary \ref{cor16}, $(M, F, m)$ is a shrinking gradient Ricci soliton with soliton constant $2(m-1)\mu$. }\end{ex}

\begin{ex} {\bf (Expanding Ricci soliton)}\label{ex74} {\rm Let $(\mathbb S^{2m-1}, \hat h)$ be a $2m-1 (\geq 3)$-dimensional sphere in $\mathbb R^{2m}$ equipped with the standard Riemannian metric $\hat h$ of positive constant curvature $\mu$ as in Example \ref{ex73}. Also, we choose the vector field $\widehat W$ as in Example \ref{ex73}, which induces a global smooth Killing vector field on $\mathbb S^{2m-1}$.

 Let $M:=(0, 1)\times \mathbb S^{2m-1}$ be an $2m$ $(m\geq 2)$-dimensional cylinder equipped with a warped product metric $h^2=dt^2+t^2\hat h^2$,  where $t\in (0, 1)$. Then $h^2(y)=(y^1)^2+t^2\hat h^2(\hat y)$ for any $y=(y^1, \hat y)\in T_{(t, x)}M=T_t(0, 1)\oplus T_x\mathbb S^{2m-1}$. By a direct calculation, the Christoffel symbols of $h$ are given by
$$\Gamma_{11}^1=\Gamma_{1j}^1=\Gamma_{11}^k=\Gamma_{1j}^1=0, \ \ \ \Gamma_{1j}^k=t^{-1}\delta^k_j, \ \ \ \Gamma_{ij}^1=-t\hat h_{ij}, \ \  \Gamma_{ij}^k= -\frac {\mu}{1+\mu|x|^2}(x_i\delta^k_j+x_j\delta^k_i),$$ where $2\leq i, j, k\leq 2m$, and $^h{\rm Ric}=2(m-1)(\mu-1) \hat h^2$. In particular, when $\mu=1$, $^h{\rm Ric}=0$.
Now we choose a function $$f(t, x):=-(m-1)\mu t^2$$ and a vector field $W:=\widehat W$ on $M$, i.e., the component $W^1$ of $W$ is zero in $t$-direction and the rest components $W^i$ coincide with $\widehat W^i$, equivalently, $W_1=0, W_i=t^2\widehat W_j$. In this case, $\|W\|_h=t\|\widehat W\|_{\hat h}=t|d|<1$ and
 \beqn W_0=t^2\widehat W_0= \frac{t^2\langle Qx+d , \hat y\rangle}{1+\mu|x|^2}.\eeqn  It is easy to check that
$$f_{:11}=-2(m-1)\mu, \ \ f_{:1j}=f_{:j1}=0, \ \ f_{:jk}=-2(m-1)\mu t^2\hat h_{jk}, $$
which mean that Hess$_h(f)(y)=-2(m-1)\mu h^2$,  and
 $$W_{1:1}=0, \ \ \ W_{1:j}=-W_{j:1}=- t\widehat W_j, \ \ \  W_{i:j}=\frac{q_{ij}}{1+\mu|x|^2}+\frac{\mu t^2\left(q_{jk}x^k x^i+x^id^j-q_{ik}x^kx^j-x^jd^i\right)}{(1+\mu|x|^2)^2},$$ which mean that $W$ is a Killing vector field of $h$ on $M$, where $2\leq i, j, k\leq 2m$. Thus $$^h{\rm Ric}+{\rm Hess}_h(f)(y)=-2(m-1)\mu h^2.$$ Hence, $(M, h, f)$ is a Riemannian expanding gradient Ricci soliton with soliton constant $-2(m-1)$ when $\mu=1$.  Note that $W^1=f_j=0$,  $\mathcal S_{11}=0$ and  $\mathcal S_{1j}=-t\widehat W_j$ ($2\leq j\leq 2m$). Consequently, $f$ satisfies
 \beqn f_k\mathcal S^k_{\ 0}+f_{:0k}W^k&=&\sum_{j=2}^{2m}f_1\mathcal S_{1j}y^j+\sum_{j=2}^{2m}f_{:1j}W^jy^1+\sum_{j, l=2}^{2m}f_{:lj}W^jy^l \\ &=& 2(m-1)\mu t^2\sum_{j=2}^{2m}\widehat W_jy^j-2(m-1)\mu t^2\sum_{j, l=2}^{2m}\hat h_{lj}\widehat W^jy^l=0,\eeqn where $1\leq k\leq 2m$.

 Assume that $\mu=1$. Let $F$ be defined by (\ref{F-hW}) in terms of $(h, W)$, i.e.,
\beqn F(x, y)
%&=&\frac{\sqrt{\lambda(1+\mu|x|^2)^2((y^1)^2+t^2\hat h^2)+t^4\langle Qx+d, \hat y\rangle^2}}{\lambda(1+\mu|x|^2)}-\frac{t^2\langle Qx+d, \hat y\rangle}{\lambda(1+\mu|x|^2)}, \nonumber \\
&=&\frac{\sqrt{\lambda(1+|x|^2)\left\{(1+|x|^2)(y^1)^2+t^2|\hat y|^2\right\}-\lambda t^2\langle x, \hat y\rangle^2+t^4\langle Qx+d, \hat y\rangle^2}}{\lambda(1+|x|^2)} \\
& &-\frac{t^2\langle Qx+d, \hat y\rangle}{\lambda(1+|x|^2)}
\eeqn where $\lambda=1-t^2|d|^2>0$, $t\in (0, 1)$, $x\in \mathbb R^{2m-1}$ and $y=(y^1, \hat y)\in T_{(t, x)}M$,  and $m$ be the measure determined by $dm=e^{(m-1) t^2}dm_{BH}$. By Corollary \ref{cor16}, $(M, F, m)$ is an expanding gradient Ricci soliton with soliton constant $-2(m-1)$.
% In fact, the previous construction also works on $\mathbb R^k\times \mathbb S^{2m-k}$, where $k$ is an odd number.
 }\end{ex}

It is worth mentioning that the constructions in Examples \ref{ex73}-\ref{ex74} do not work on $M=\mathbb R\times \mathbb S^{2m}$ because $\mathbb S^{2m}$ does not admit a nowhere zero Killing vector field by Berger's Theorem (Theorem 38, \cite{Pet}). Note that any Riemannian space with negative Ricci curvature does not admit a nowhere zero Killing vector field of constant length (\cite{BN}). The above construction can not be applied to $M=\mathbb R\times \mathbb H^{n}$, where $\mathbb H^n$ is an $n$-dimensional hyperbolic space of constant negative curvature.

\bigskip

% {\bf Data Availability}  Data sharing is not applicable to this article as no data sets were generated or analyzed during the current study.

{\bf{ACKNOWLEDGMENT.}}
The author is very grateful to the anonymous referee for his/her careful reading and many valuable suggestions or comments on the manuscript. These have greatly improved the presentation of the paper.

 \end{document}